\documentclass[11pt,reqno]{amsart}
\usepackage{}
\usepackage{mathrsfs}
\usepackage{amsfonts}
\usepackage{a4wide}
\allowdisplaybreaks \numberwithin{equation}{section}
\usepackage{color}

\newtheorem{thm}{Theorem}[section]

\newtheorem{lem}[thm]{Lemma}
\newtheorem{rem}[thm]{Remark}

\newtheorem{prop}[thm]{Proposition}

\theoremstyle{definition}

\newcommand{\R}{\mathbb{R}}
\newcommand{\ds}{\displaystyle}

\begin{document}
\title[A new type of solutions]
{infinitely many new solutions for a nonlinear coupled Schr\"odinger system }
 \author{ Qingfang Wang and Mingxue Zhai$^\dagger$}
\address[Qingfang Wang]{School of Mathematics and Computer Science, Wuhan Polytechnic University, Wuhan, 430023, P. R. China }
\email{wangqingfang@whpu.edu.cn}

\address[Mingxue Zhai]{School of Mathematics and Statistics, Central China Normal University, Wuhan, 430079, P. R. China}
\email{mxzhai@mails.ccnu.edu.cn}
\thanks{}
\begin{abstract}
We revisit the following nonlinear Schr\"odinger system
\begin{equation*}
\begin{cases}
-\epsilon^{2}\Delta u +P(x) u= \mu_1 u^3  +\beta uv^2,
&~\text{in}\;\mathbb R^3,\\
\noalign{\vskip1truemm} -\epsilon^{2}\Delta v+Q(x) v= \mu_2 v^3 +\beta
u^2v, &~\text{in}\;\mathbb R^3,
\end{cases}
\end{equation*}
where $\epsilon$ is a positive parameter, $P(x),\,Q(x)$ are the potential functions, $\mu_1>0$, $\mu_2>0$ and $\beta\in\mathbb R$ is
 a coupling constant.
Employing  the finite dimensional reduction method, we prove that there are new kind of
synchronized and segregated solutions, which concentrate both in a bounded domain and near infinity, and present a special structure.
Moreover, by applying the local Pohozaev identities and some point-wise estimates of the errors, we prove that the new kind of synchronized solutions are non-degenerate, which is of great interest independently. One of the main difficulties of Schr\"odinger system come from the interspecies interaction between the components, which never appear in the study of single equation. Secondly, prior to the construction of new solutions, we shall verify the non-degeneracy of the solutions established in [Peng-Pi, Discrete Contin. Dyn. Syst., 2016] for the Schr\"odinger systems.
\end{abstract}
\thanks{$\dagger$ Corresponding author}

\maketitle
 \small{
\keywords {\noindent {\bf Keywords:} {Lyapunov-Schmidt reduction; Non-degeneracy; Pohozaev identities; Schr\"odinger systems; Synchronized solutions. }
\smallskip

\subjclass{\noindent {\bf 2010 Mathematics Subject Classification:} 35B25 $\cdot$ 35J47 $\cdot$ 35J60}}
\section{Introduction}

We consider the following nonlinear Schr\"odinger system
\begin{equation}\label{eq0}
\begin{cases}
-\epsilon^{2}\Delta u +P(x) u= \mu_1 u^3  +\beta uv^2,
&~\text{in}\;\mathbb R^3,\\
\noalign{\vskip1truemm} -\epsilon^{2}\Delta v+Q(x) v= \mu_2 v^3 +\beta
u^2v, &~\text{in}\;\mathbb R^3,
\end{cases}
\end{equation}
where  $P(x),\, Q(x)$ are continuous positive
functions, $\mu_1>0$, $\mu_2>0$ and $\beta\in\mathbb R$ is
 a coupling constant.

These type of systems arise when one considers standing wave solutions of time dependent $N$-coupled Schrodinger systems with $N=2$ of the form
\begin{equation}\label{eq00}
\begin{cases}
 -i\partial_t\Phi_j=\frac{h^2}{2l}\Delta\Phi_j-V_j(x)\Phi_j-\mu_j|\Phi_j|^2\Phi_j-\sum_{i\neq j}\beta_{ij}|\Phi_i|^2\Phi_j,\ &x\in\R^3,t>0,\\
 \Phi_j=\Phi(x,t)\in\mathbb C, \ \ &j=1,2
\end{cases}
\end{equation}
these systems, also known as the Gross-Pitaevskii system \cite{EG,EGBB,hmewc,pw}, serves as a mathematical model for binary Bose-Einstein condensate, describing the time evolution of the condensate wave functions $\Phi_j$, where $j=1,2$. Here, $h$ is the Planck constant, the constants $\mu_j$ and $\beta_{ij}$ represent the intraspecies and interspecies scattering lengths respectively, which characterize interactions between identical and distinct particle species, the functions $V_j(x)$  denote the magnetic trapping potentials. It is natural to assume that $\beta_{ij}$ is symmetric(i.e., $\beta_{ij}=\beta_{ji}$ if $j\neq i$).
The sign of the scattering length $\beta=\beta_{ij}$ determines the interaction of two states.
When $\beta>0$ indicates an attractive interaction, where the components of the vector solution come together. While $\beta<0$ indicates a repulsive interaction, where the components turn into repel each other and form phase separations.

\smallskip
The existence and qualitative properties of solutions for nonlinear Schr\"odinger equations have been the subject of extensive investigation from a mathematical perspective in recent years. Various aspects, such as the existence and asymptotic behavior of ground states or bound states, have been explored in numerous studies \cite{3,8,11,13,15,21,LW05,23,26,31,32,ZZR} with semiclassical states established in \cite{12,20,24,29,WW}.
In the absence of trapping potentials, specifically with constant potentials as examined in \cite{19}, it has been demonstrated that when the coupling constant $\beta$ is negative and sufficiently large, the two states repel each other, behaving as two separate spikes (see \cite{14,34}).
Conversely, when $\beta$ is positive and sufficiently large, spikes of states may attract each other, behaving like a single spike.

With the influence of trapping potentials, as analyzed in \cite{20}, spikes are separated and trapped at the minimum points of potentials when $\beta<0$. On the contrary, for $\beta>0$, there is a competition between the attractions of spikes and trap potential wells. If the potential wells are strongly enough, the spikes separate. However, if the attraction of potential wells is not strong enough, the spikes come together. These observations indicate that the interspecies scattering length and potentials can significantly impact the locations of spikes, making the interaction of spikes complex and challenging to determine.
For additional references on this aspect, one can refer to  \cite{ambrosetti-colorado-ruiz,BC13,GLWZ19-1,GLWZ19-2,GLW,GS,Guo1,
LWW-2024-JDE,LWW-2024-JDE2,LP14,PV,
PWW-2019-TAMS,RL14}.
\smallskip

When $\epsilon=1$ in system \eqref{eq0}, Peng and Wang \cite{PW13} constructed an unbounded sequence of non-radial positive vector
solutions $(u_{l}(x),v_{l}(x))$ in the
repulsive case $(\beta>0)$, which exhibit a segregated type, and an unbounded
sequence of non-radial positive vector solutions of synchronized type in the attractive case $(\beta<0)$.

Peng and Pi \cite{PP16} investigated how potentials and the interspecies scattering length $\beta$ influence the solutions structure of system \eqref{eq0}, extending the results of \cite{20} for least energy solutions.  They established the existence of higher energy solutions to system \eqref{eq0}, not only  providing the locations of spikes but also intricately depicting the interaction of spikes.

This prompts a natural question: Can we glue these two sets of solutions from \cite{PP16} and \cite{PW13} to yield a new type of solution? Specifically, we aim to construct a new type of solutions to system \eqref{eq0} with a shape given, at the main order, by
 \begin{equation}\label{eqs2}
 \Big(u_{l}(x)+\sum_{j=1}^{k}U(\frac{x-\eta^{j}}{\epsilon}),
 v_{l}(x)+\sum_{j=1}^{k}V(\frac{x-\eta^{j}}{\epsilon})\Big),
 \end{equation}
 which
 concentrates both in a bounded domain and near infinity, where $\Big(\sum_{j=1}^{k}U(\frac{x-\eta^{j}}{\epsilon}),
\sum_{j=1}^{k}V(\frac{x-\eta^{j}}{\epsilon})\Big)$ is defined in \textbf{Theorem A} later.

Our construction requires that the two solution families from \cite{PP16}  and \cite{PW13} can be glued together, requiring that the potential functions $P$ and $Q$ are radially symmetric about some fixed point. Therefore, we focus on  Theorem 1.1 in \cite{PP16} (i.e., Theorem A). In this case, the two components of the solution concentrate at the same critical points of the two potential functions. Without loss of generality, we assume this point is the origin, allowing us to combine these solutions with those concentrated at $k$ symmetric points on a coordinate plane established in \cite{PW13}. However, the solutions constructed in Theorem 1.2 of \cite{PP16}, which concentrate at different critical points of $P$ and $Q$, do not satisfy the conditions for gluing and cannot yield new solutions through gluing.
\smallskip
Before delving into the details, let us revisit the following well-known results. Denote the unique radial solution of the following problem by $W$,
\begin{equation*}
\begin{cases}
-\Delta w +   w = w^3, & \text{in}\; \mathbb R^3,\\
w(0)=\displaystyle\max_{x\in\mathbb R^3}w(x),& w\in H^1(\mathbb
R^3),
\end{cases}
\end{equation*}
and denote $W_{\mu_i}$ as the solution of the following problem
\begin{equation}\label{eqs1.2}
\begin{cases}
-\Delta w +   w = \mu_iw^3, & \text{in}\; \mathbb R^3,\\
w(0)=\displaystyle\max_{x\in\mathbb R^3}w(x),& w\in H^1(\mathbb
R^3).
\end{cases}
\end{equation}
It is well known that $W_{\mu_i}$ is non-degenerate and $W_{\mu_i}(x)=W_{\mu_i}(|x|)$.
Moreover, one finds that $(U,V)=(\gamma_1 W,\gamma_2 W)$ solves
\begin{equation}\label{eqQ1}
\begin{cases}
-\Delta U + U= \mu_1 U^3  +\beta UV^2,  &~\text{in}\;\mathbb R^3,\\[1mm]
-\Delta V+ V= \mu_2 V^3  +\beta U^2V, &~\text{in}\;\mathbb R^3
\end{cases}
\end{equation}
 provided $\beta\in \big(-\sqrt{\mu_1\mu_2},\min\{\mu_1,\mu_2\}\big)
 \cup \big(\max\{\mu_1,\mu_2\},+\infty\big)$
, where
$$\gamma_1=\sqrt{\frac{\mu_2-\beta}{\mu_1\mu_2-\beta^2}},\ \ \gamma_2=\sqrt{\frac{\mu_1-\beta}{\mu_1\mu_2-\beta^2}}.$$
\medskip

 To introduce our main motivation, we first state a existence result of multi-peak solutions for system \eqref{eq0} with $\epsilon\rightarrow 0$
under the following assumptions $P(x),Q(x)$ and $\beta$:
\smallskip

${\bf(A_1)}$ $P(x),Q(x)\in C^{1}(\R^{3},\R),$
there exists a positive constant $\alpha$
such that $P(x),Q(x)\geq \alpha$ for all $x\in \R^{3}.$

${\bf(A_2)}$ there exists $ \delta>0$ and $y_{0}\in \R^{3}$  such that $P(x)<P(y_{0}),Q(x)<Q(y_{0})$
for any $x\in B_{\delta}(y_{0})\setminus\{y_{0}\}$ and $P(y_{0})=Q(y_{0})=1,$
where $B_{\delta}(y_{0})=\{x\in \R^{3}: |x-y_{0}|<\delta\}.$

${\bf(A_\beta)}$ There exists
 an integer $\beta^{*}>0$ and a decreasing sequence $\{\beta_{n}^{*}\}\subset (-\sqrt{\mu_{1}\mu_{2}},0)$
 with $\lim_{n\rightarrow\infty}\beta_{n}^{*}=-\sqrt{\mu_{1}\mu_{2}},$
 such that
 $\beta\in ((-\beta^{*},0)\setminus  \{\beta_{n}^{*}\}_{n=1}^{\infty}\cup (0,\min\{\mu_{1},\mu_{2}\})\cup (\max\{\mu_{1},\mu_{2}\},\infty)$.
\smallskip

\noindent\textbf{Theorem A.} (c.f \cite{PP16})
 \emph{
 Suppose that $P(x),Q(x)$ satisfy $(A_1)$ and $(A_{2})$ and $\beta$ satisfies $(A_\beta)$. Then
 for any positive integer $n\geq 2$, system \eqref{eq0}
 has a positive solution $(u_{\epsilon},v_{\epsilon})$ with $n$ spikes concentrating near $y_{0}$ for $\epsilon$ sufficiently small.}
 Specifically, $(u_{\epsilon},v_{\epsilon})$ has the following form
 $$
 (u_{\epsilon},v_{\epsilon})=\Big(\sum_{j=1}^{n}U(\frac{x-\eta^{j}}{\epsilon})+\omega_{\epsilon}(x),
 \sum_{j=1}^{n}V(\frac{x-\eta^{j}}{\epsilon})+\varpi_{\epsilon}(x))\Big),
 $$
 where $(\omega_{\epsilon},\varpi_{\epsilon})\in H$ with
 $$
 \|(\omega_{\epsilon},\varpi_{\epsilon})\|=O\big(\epsilon^{5}|ln \epsilon|^{2}\big),\,\,\frac{|\eta^{i}-\eta^{j}|}{\epsilon}\geq |ln \epsilon|^{\frac{1}{2}}(i\neq j),\,\,\eta^{j}\in B_{\delta}(y_{0})(j=1,...,n).
 $$
\smallskip

First we are aimed to prove the non-degeneracy property of the solution $(u_{\epsilon},u_{\epsilon})$
   to \eqref{eq0} constructed in  Theorem A,
 in the sense that the linearized operator $L_\epsilon$ defined by
 \begin{equation*}\label{Lk}
\begin{split}
\big\langle L_\epsilon (\psi_1,\psi_2),(\phi_1,\phi_2)\big\rangle
  :=&\int_{\mathbb R^3}
\big[\epsilon^2\nabla  \psi_1\nabla\phi_1 + P(x)\psi_1\phi_1
+\epsilon^2\nabla  \psi_2\nabla\phi_2 + Q(x)\psi_2\phi_2
\big]\\
&-3 \int_{\mathbb R^3}\big[\mu_1 u_\epsilon^2\psi_1\phi_1+\mu_2 v_\epsilon^2\psi_2\phi_2\big]-
\int_{\mathbb R^3}
\big[2\beta
u_\epsilon v_\epsilon \psi_2\phi_1+\beta
v_\epsilon^2\psi_1\phi_1 \big]\\
&-\int_{\mathbb R^3}
\big[2\beta
u_\epsilon v_\epsilon\psi_1\phi_2+\beta
u_\epsilon^2\psi_2\phi_2\big]
\end{split}
\end{equation*}
has trivial kernel in $ H^1(\R^3)\times H^1(\R^3)$. We note that the
non-degenerate result play an essential role to construct solutions
when Lyapunov Schmidt reduction are used to  the systems. For the
non-degeneracy and the local uniqueness results of
Schr\"{o}dinger equations or systems, one can refer to
\cite{guo-musso-peng-yan2,GPY,LPY19,NY00} etc.

For this purpose, we need to impose the conditions for the potential functions $P(x)$ and $Q(x)$ as follows

$\bf (H_{1})$:
$0$ is a critical of the radial functions $P(x),\,Q(x)$ and $ ~\gamma_{1}^{2}P(x)+\gamma_{2}^{2}Q(x)$
is non-degenerate at $0$.

${\bf (H_2)}$: For $i=1,2$, there are constants $m_i>2$, $a_i\neq0, b_i>0$ and $\theta>0$ such that as $r\rightarrow+\infty$
\begin{align*}
\begin{split}
&P(r)=1+\frac{a_1}{r^{m_1}}+\frac{b_1}{r^{m_1+1}}+O(\frac 1{r^{m_1+1+\theta}}),\ \
Q(r)=1+\frac{a_2}{r^{m_2}}+\frac{b_2}{r^{m_2+1}}+O(\frac
1{r^{m_2+1+\theta}}),
\end{split}
\end{align*}
where we further suppose
\begin{align*}
\begin{cases}
a_1>0,&\text{if} ~~m_1<m_2,\\
a_2>0,&\text{if}~~ m_1>m_2,\\
a_1\gamma_1^2+a_2\gamma_2^2>0,&\text{if}~~ m_1=m_2.
\end{cases}
\end{align*}
In the following we always denote $m:=\min\{m_1,m_2\}$.

Set
\begin{align*}\label{xij}
x^{j}=\Big(r\cos\frac{2(j-1)\pi}{k},r\sin\frac{2(j-1)\pi}{k},0\Big), \ j=1,\ldots,k,
\end{align*}
where $r\in [r_0 k\ln k,r_1 k\ln k]$ for some $r_1>r_0>0$.
Let
\begin{align}
U_{\epsilon, x^j}:=U_\epsilon(x-x^j)=U(\frac{x-x^j}{\epsilon}),\,\,\,\,\,V_{\epsilon, x^j}:=V_\epsilon(x-x^j)=V(\frac{x-x^j}{\epsilon}).
\end{align}
Also, we define
\begin{align*}
H_{\epsilon,P}=\Big\{u\in H^1(\R^3):&\,\,\,\,\,\,\,u \hbox{\ is\ even\ in}\ y_2,y_3,\,\cr
 &u(r\cos\theta,r\sin\theta,y'')=u\Big(r\cos(\theta+\frac{2\pi j}k),r\sin(\theta+\frac{2\pi j}k),y''\Big)
\Big\}.
\end{align*}
where the norm of $H^1(\R^N)$ is induced by the inner product
\begin{align*}
\langle u,v \rangle :=\Big(\int_{\R^3}\epsilon^2\nabla u\nabla v+P(x)uv\Big),u,v\in H^1(\R^N).
\end{align*}

Moreover, denote $$H:=H_{\epsilon,P}(\R^3)\times H_{\epsilon,Q}(\R^3)$$ with the norm
\begin{align*}
\|(u,v)\|^{2}:=\|u\|^{2}_{\epsilon,P}+\|u\|^{2}_{\epsilon,Q}.
\end{align*}
\vskip 0.2cm

The first main result of this paper is as follows.
\begin{thm}\label{th3}
Assume that \textup{($A_1)-(A_2),\,(H_1)-(H_2$)}  and $(A_\beta)$ hold.
 Let $(u_{\epsilon},v_{\epsilon})$ be a pair of solution obtained in Theorem A. Then
 system \eqref{eq0} has infinitely many pairs of synchronized dichotomous concentrated solutions.
 More specifically, for any integer $k>k_{0}$ large enough, system \eqref{eq0}  has a pair
 of solution of the following form
 $$
 u_{\epsilon,k}=S_{\epsilon,k}(x)+\varphi_{k}(x),v_{\epsilon,k}=T_{\epsilon,k}(x)+\psi_{k}(x)
 $$
where
 $$
 S_{\epsilon,k}(x):=u_{\epsilon}(x)+\sum_{j=1}^{k}U_{\epsilon,x^{j}}(x),\,T_{\epsilon,k}(x):=v_{\epsilon}(x)+\sum_{j=1}^{k}V_{\epsilon,x^{j}}(x),
 $$
and $\|(\varphi_{k},\psi_{k})\| \rightarrow 0$ as $k\rightarrow\infty$.
\end{thm}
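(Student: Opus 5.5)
The plan is a Lyapunov--Schmidt reduction around the approximate solution $(S_{\epsilon,k},T_{\epsilon,k})$ in the symmetric space $H$. The parameter is $k\to\infty$ with $\epsilon$ fixed small, and the only free parameter is the radius $r\in[r_0k\ln k,r_1k\ln k]$ of the $k$ bumps placed at $x^j$. The essential preliminary is the non-degeneracy of $(u_\epsilon,v_\epsilon)$ from Theorem A. We first prove that the linearized operator $L_\epsilon$ has trivial kernel. This is the place where $(H_1)$ enters: the kernel elements must be the combinations $\sum_j\sum_i c_{ij}\partial_{x_i}(U,V)((x-\eta^j)/\epsilon)$ plus small errors, and local Pohozaev identities on balls around each $\eta^j$ force $c_{ij}=0$. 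These identities use the non-degeneracy of $\gamma_1^2P+\gamma_2^2Q$ at $0$ together with the interaction estimates between spikes at distance $\ge\epsilon|\ln\epsilon|^{1/2}$. Given this, $L_\epsilon$ is invertible on $H$ with a bound independent of $k$, and we take this as available.

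\textbf{Step 1 (linear theory).} We work in $H$, where the $(y_2,y_3)$-evenness and the $2\pi/k$ rotational symmetry kill all translation kernels of the far bumps except the radial one. Set
\[
Z_j=\Big(\partial_r U_{\epsilon,x^j},\ \partial_r V_{\epsilon,x^j}\Big).
\]
We consider the linearization at $(S_{\epsilon,k},T_{\epsilon,k})$ projected onto the orthogonal complement of $\{Z_j\}$. We prove an a priori bound $\|(\phi,\psi)\|\le C\|h\|$ by contradiction, using a blow-up/localization argument. Near infinity the limit equation is the linearization of \eqref{eqQ1} at $(U,V)$, whose kernel, for $\beta$ as in $(A_\beta)$, consists only of translations; in $H$ the orthogonality condition removes the radial one. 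Near the origin the limit is $L_\epsilon$, whose kernel is trivial by the preliminary step. Since the bumps sit at distance $\sim k\ln k$, with mutual separation $\sim \ln k$, the cross terms are small. Fredholm theory then gives invertibility.

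\textbf{Step 2 (error and nonlinear problem).} We estimate the error
\[
R=-\epsilon^2\Delta S_{\epsilon,k}+P S_{\epsilon,k}-\mu_1S_{\epsilon,k}^3-\beta S_{\epsilon,k}T_{\epsilon,k}^2
\]
and its analogue for the second component. The contributions come from $(P-1)U_{\epsilon,x^j}$, which is $O(r^{-m})$ by $(H_2)$; from interactions between neighbouring bumps, of order $e^{-|x^1-x^2|/\epsilon}$ with $|x^1-x^2|\sim 2\pi r/k\sim\ln k$; and from interactions with $(u_\epsilon,v_\epsilon)$, which are exponentially small in $r$. Summed over the $k$ bumps, this should give $\|R\|=O\big(k^{1/2}(r^{-m}+e^{-c r/k})\big)\to0$ in a suitable weighted sense; the exact exponent is to be fixed later. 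A contraction mapping argument then yields $(\varphi_k,\psi_k)$ satisfying the equation up to $\sum_j c Z_j$, with $\|(\varphi_k,\psi_k)\|\le C\|R\|\to0$.

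\textbf{Step 3 (reduced problem).} We expand the reduced energy
\[
F(r)=I(S_{\epsilon,k}+\varphi_k,T_{\epsilon,k}+\psi_k)=I(u_\epsilon,v_\epsilon)+k\Big[A+\frac{B}{r^m}-C e^{-\frac{2\pi r}{\epsilon k}}+o\Big(\frac1{r^m}\Big)\Big],
\]
with $B>0$ by the sign conditions in $(H_2)$; in the case $m_1=m_2$ this is $B\propto a_1\gamma_1^2+a_2\gamma_2^2$. The key point is that $C>0$ because the bumps are synchronized (the interaction is attractive). The function $B r^{-m}-Ce^{-2\pi r/(\epsilon k)}$ has an interior maximum in $[r_0k\ln k,r_1k\ln k]$ for suitable $r_0,r_1$, obtained by balancing $r^{-m}\sim e^{-2\pi r/(\epsilon k)}$, which gives $r\sim \frac{m\epsilon}{2\pi}k\ln k$. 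A max over an interval that is stable under $o(1)$ perturbations then gives a critical point of $F$. Finally, the standard argument shows that critical points of $F$ produce $c=0$, yielding Theorem \ref{th3}.

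The main obstacle is Step 1: the coupling between the far bumps and the interacting cluster $(u_\epsilon,v_\epsilon)$. This requires non-degeneracy of Theorem A's solution, which is proved via Pohozaev identities; controlling the $\beta$ cross terms in those identities is the delicate part.
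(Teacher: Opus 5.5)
Your proposal follows essentially the same route as the paper. The paper first proves non-degeneracy of the Theorem A solution (Proposition \ref{lema.1}), by blowing up at each $\eta^j$ and applying local Pohozaev identities together with $(H_1)$. It then carries out the reduction on $E_k\subset H$: invertibility comes from a blow-up argument at $x^1$ (the limit kernel is killed by the orthogonality condition) and at the origin (killed by the non-degeneracy of $(u_\epsilon,v_\epsilon)$), followed by an error estimate, a contraction mapping, and maximization of the reduced energy $F(r)$ over an interval of size $\sim k\ln k$ to obtain an interior critical point. Your tracking of $\epsilon$ in the balance $r\sim\frac{m\epsilon}{2\pi}k\ln k$ and of the $k^{1/2}$ factor in the error norm is, if anything, more careful than the paper's bookkeeping, but the argument is the same.
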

\begin{rem}
In the proof of Theorem \ref{th3}, the process of constructing solutions that are concentrated respectively at the origin and at infinity is not achieved by directly superimposing spikes centered at the origin and in the vicinity of infinity as an approximate solution for reduction. Instead, it is accomplished by first proving that the solutions concentrated at the origin  as $\epsilon\rightarrow0$  described in Theorem A are non-degenerate. Then, by fixing a sufficiently small $\epsilon>0$, these concentrated solutions are combined with k spikes in the vicinity of infinity to form a new approximate solution. This process involves taking the limit as $k$ approaches infinity and reconstructing solutions concentrated at infinity.
\end{rem}

Define
\begin{align*}
y^j=\Big(\rho\cos\frac{(2j-1)\pi}{k},\rho\sin\frac{(2j-1)\pi}{k},0\Big), \ j=1,\ldots,k,
\end{align*}
where $\rho\in[c_{1}k\ln k,c_{2}k\ln k]$ for $0<c_{1}<c_{2}$. Also we write
$$
W_{\epsilon,\mu_1,x^j}:=W_{\mu_1}(\frac{|x-x^j|}{\epsilon}),~~~~~~W_{\epsilon,\mu_2,y^j}:=W_{\mu_2}(\frac{|x-y^j|}{\epsilon}).
$$
We have  another result as follows.
\begin{thm}\label{th1}
 Under the same conditions as in Theorem \ref{th3}
 and $m_1=m_2=m$.
Let $(u_{\epsilon},v_{\epsilon})$ be a pair of solution obtained in Theorem A. Then there exists $\varepsilon>0$ small enough such that $\beta<\varepsilon$,
 system \eqref{eq0} has infinitely many many pairs of segregated dichotomous concentrated solutions.
 More specifically, for any integer $k>k_{0}$ large enough, system \eqref{eq0}  has a pair
 of solution of the following form
 $$
 u_{\epsilon,k}=\bar{S}_{\epsilon,k}(x)+\bar{\varphi}_{k}(x),\,\,v_{\epsilon,k}=\bar{T}_{\epsilon,k}(x)+\bar{\psi}_{k}(x),
 $$
where
 $$
 \bar{S}_{\epsilon,k}(x)=u_{\epsilon}(x)+\sum_{j=1}^{k}W_{\epsilon,\mu_1,x^{j}}(x),\,\,\bar{T}_{\epsilon,k}(x):=v_{\epsilon}(x)+\sum_{j=1}^{k}W_{\epsilon,\mu_2,y^{j}}(x),
 $$
 where $\|(\bar{\varphi}_{k},\bar{\psi}_{k})\| \rightarrow 0$ as $k\rightarrow\infty.$
\end{thm}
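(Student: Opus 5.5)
The proof of Theorem \ref{th1} will follow the same Lyapunov--Schmidt scheme as Theorem \ref{th3}. The two components now carry spikes at the interlaced points $x^j$ and $y^j$, so we work in a symmetric space adapted to this: $u$ is invariant under rotation by $2\pi/k$ and even in $y_2,y_3$, and $v$ has the same rotational invariance. The evenness of $v$ is taken about the rotated axis through $y^1$, equivalently via the reflection $\theta\mapsto 2\pi/k-\theta$, which maps the configuration $\{y^j\}$ to itself. This leaves two free parameters, $r$ and $\rho$.

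\textbf{Step 1 (non-degeneracy of Theorem A's solutions).} We fix $\epsilon$ small and take as given the non-degeneracy of the kernel of $L_\epsilon$ at $(u_\epsilon,v_\epsilon)$ in $H^1\times H^1$. This is the result the paper announces it proves via local Pohozaev identities under $(H_1)$; it is the same ingredient used for Theorem \ref{th3}.

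\textbf{Step 2 (linear theory).} Write the approximate solution $(\bar S_{\epsilon,k},\bar T_{\epsilon,k})$. Linearize around it and project onto the complement of the two-dimensional space spanned by $\sum_j \partial_r W_{\epsilon,\mu_1,x^j}$ and $\sum_j\partial_\rho W_{\epsilon,\mu_2,y^j}$. We then show that the projected operator is uniformly invertible for $k$ large. The argument is by contradiction with blow-up. Near the origin the limit problem is $L_\epsilon$, which has trivial kernel by Step 1. Near each $x^j$ (respectively $y^j$) the limit problem is decoupled, since the spike of the other component sits at distance of order $r/k\sim\ln k\to\infty$. It is therefore governed by the non-degeneracy of the single-equation ground states $W_{\mu_i}$, whose kernel the symmetry reduces to the radial direction. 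Away from all centres the potentials are close to $1$, so coercivity holds there. The smallness of $\beta$ (namely $\beta<\varepsilon$) is used here: the coupling terms $\beta W_{\mu_1}^2\phi_2$ and $\beta W_{\mu_2}^2\phi_1$ near the infinity spikes, and the mixed terms involving $u_\epsilon,v_\epsilon$, are perturbations. Because $\beta$ is small, the decoupled spikes persist as non-degenerate objects for the system.

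\textbf{Step 3 (error and reduced energy).} Estimate the error $\|R_k\|$. It consists of three contributions: the tail interactions $\sum_{i\ne j}W(\cdot-x^i)W(\cdot-x^j)$, of size $e^{-(1-\tau)2\pi r/(\epsilon k)}$; the potential terms $(P-1)W_{\epsilon,\mu_1,x^j}$, of size $r^{-m}$; and the cross interactions with $(u_\epsilon,v_\epsilon)$ near the origin, which are exponentially small in $r/\epsilon$. The $\beta$-coupling between $x^j$- and $y^j$-spikes is also exponentially small. This gives $\|R_k\|\le C k^{-m/2-\tau}$-type bounds. A contraction argument then produces $(\bar\varphi_k,\bar\psi_k)$ with $\|(\bar\varphi_k,\bar\psi_k)\|\to0$. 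Next, expand the reduced energy
\[
F(r,\rho)=k\Big[A_1+\frac{B_1a_1}{r^{m}}-C_1e^{-2\pi r/(\epsilon k)}+A_2+\frac{B_2a_2}{\rho^{m}}-C_2e^{-2\pi\rho/(\epsilon k)}+O\big(\beta e^{-c|x^1-y^1|/\epsilon}\big)+o(\cdot)\Big].
\]
Here $m_1=m_2=m$ makes the two radial profiles comparable.

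\textbf{Step 4 (critical point).} Since in this case the condition $(H_2)$ requires only $a_1\gamma_1^2+a_2\gamma_2^2>0$, individual signs of $a_i$ are not controlled. I would first treat the case $a_1,a_2>0$. Each one-variable function $r\mapsto B_1a_1r^{-m}-C_1e^{-2\pi r/(\epsilon k)}$ then has a strict maximum at $r\sim \frac{m\epsilon}{2\pi}k\ln k$, which is stable under the $\beta$-small cross term. A max--max argument on the box $[r_0k\ln k,r_1k\ln k]\times[c_1k\ln k,c_2k\ln k]$, with boundary comparisons, yields a critical point. The main obstacle is this last step together with the uniform invertibility in Step 2. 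Specifically, one must control the mixed second-order terms in $F$ coming from the $\beta$-interaction and from the origin cluster, ensuring they are $o$ of the diagonal leading terms. Here the smallness of $\beta$ is essential, and one also needs the existence of the maximum in each variable, which may force a sign assumption on each $a_i$.
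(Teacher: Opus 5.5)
Your plan follows the paper's proof of Theorem~\ref{th1} essentially step for step: reduction on $\bar E_k$ (one orthogonality condition per component), invertibility of $\bar L$ by blow-up at $x^1$, at $y^1$ and near the origin (where Proposition~\ref{lema.1} enters) with small $\beta$ absorbing the coupling terms $\beta\int(\bar S_{\epsilon,k}^2\psi_k^2+\bar T_{\epsilon,k}^2\varphi_k^2)$, then the error bound of Lemma~\ref{lm4.4}, a contraction argument, and maximization of the reduced energy over $D_k\times D_k$. Your caveat that the last step needs $a_1>0$ and $a_2>0$ separately is well founded: the paper tacitly assumes exactly this when it asserts that the profiles $f(r)$ and $h(\rho)$ each have an interior maximizer at $(\frac{m}{2\pi}+o(1))k\ln k$; and since that step only compares values on the box, you do not need the mixed second-order coupling terms to be $o$ of the diagonal ones (near $r=\rho$ they are not).
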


Inspired by \cite{GZ-2022},  we are aimed to discuss the non-degeneracy of $(u_{\epsilon,k},v_{\epsilon,k})$
obtained in Theorem \ref{th3}, which is of great importance  to construct more new type of  concentrated solutions by the same gluing procedure.
To this end, we also define the   linearized operator $L_k$ as
 \begin{equation*}\label{Lk}
\begin{split}
\big\langle  L_k (\psi_1,\psi_2),(\phi_1,\phi_2)\big\rangle
  :=&\int_{\mathbb R^3}
\big[\epsilon^2\nabla  \psi_1\nabla\phi_1 + P(x)\psi_1\phi_1
+\epsilon^2\nabla  \psi_2\nabla\phi_2 + Q(x)\psi_2\phi_2\big]
\\
&-3 \int_{\mathbb R^3}\big[\mu_1 u_{\epsilon,k}^2\psi_1\phi_1+\mu_2 v_{\epsilon,k}^2\psi_2\phi_2\big]
-\int_{\mathbb R^3}
\big[2\beta
u_{\epsilon,k} v_{\epsilon,k} \psi_2\phi_1+\beta
v_{\epsilon,k}^2\psi_1\phi_1 \big]\\
&-\int_{\mathbb R^3}\big[2\beta
u_{\epsilon,k} v_{\epsilon,k}\psi_1\phi_2+\beta
u_{\epsilon,k}^2\psi_2\phi_2\big].
\end{split}
\end{equation*}

${\bf ( H'_2)}$: Under $(H_2)$, we further assume that
\begin{align*}
\begin{split}
&
P'(r)=-\frac{a_1m_1}{r^{m_1+1}}-\frac{b_1(m_1+1)}{r^{m_1+2}}+O(\frac 1{r^{m_1+2+\theta}}),\ \
Q'(r)=-\frac{a_2m_2}{r^{m_2+1}}-\frac{b_2(m_2+1)}{r^{m_2+2}}+O(\frac
1{r^{m_2+2+\theta}}).
\end{split}
\end{align*}

The non-degeneracy of $(u_{\epsilon,k},v_{\epsilon,k})$ can be stated as the following.
\begin{thm}\label{th1.3}
 Assume that \textup{($A_{1}),(A_2),(H_1),( H'_2)$}  and $(A_\beta)$ hold.  Let
 $(\xi_{1},\xi_{2})\in(H_{\epsilon,P}\cap H^1(\mathbb{R}^3))\times (H_{\epsilon,Q}\cap H^1(\mathbb{R}^3))$  satisfies
 $L_k(\xi_{1},\xi_{2})=0$. Then $(\xi_{1},\xi_{2})=0$ for fixed small $\epsilon$.
\end{thm}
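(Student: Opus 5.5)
We argue by contradiction, following the blow-up scheme of \cite{GZ-2022} and \cite{guo-musso-peng-yan2}. Suppose there are $k_n\to\infty$ and nontrivial $(\xi_{1,n},\xi_{2,n})$ in the symmetric space with $L_{k_n}(\xi_{1,n},\xi_{2,n})=0$. Normalize so that $\|(\xi_{1,n},\xi_{2,n})\|_{L^\infty}=1$. The symmetry of $H_{\epsilon,P}\times H_{\epsilon,Q}$ means it suffices to study $\xi_{i,n}$ in two places: near the origin, and near the single far spike $x^1$, where $|x^1|=r_n\sim k_n\ln k_n$.

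\emph{Step 1 (decay away from spikes).} Away from $B_{R\epsilon}(0)$, the spikes $\eta^j$ of Theorem A, and the balls $B_{R\epsilon}(x^j)$, the coefficients $u_{\epsilon,k}^2, v_{\epsilon,k}^2, u_{\epsilon,k}v_{\epsilon,k}$ are small compared with $\min\{P,Q\}\ge\alpha$. A comparison argument with barriers $\sum_j e^{-\tau|x-x^j|/\epsilon}+\sum_j e^{-\tau|x-\eta^j|/\epsilon}$ then shows that $\max|\xi_{i,n}|$ is attained in one of these balls. It therefore suffices to show that $\xi_{i,n}\to0$ uniformly on each of them.

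\emph{Step 2 (near the origin).} Here $\epsilon$ is fixed and the far spikes contribute only $O(e^{-c r_n/\epsilon})$ to the coefficients. Hence $(\xi_{1,n},\xi_{2,n})$ converges in $C^1_{loc}$ to a bounded kernel element of $L_\epsilon$. Its $H^1$-finiteness comes from the decay obtained in Step 1. By the non-degeneracy of $(u_\epsilon,v_\epsilon)$, which the paper proves prior to the construction (as promised in the introduction), this limit is zero. So $\xi_{i,n}\to0$ uniformly near $0$.

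\emph{Step 3 (near $x^1$).} Define $\bar\xi_{i,n}(y)=\xi_{i,n}(\epsilon y+x^1)$. This converges to a bounded solution of the system obtained by linearizing \eqref{eqQ1} at $(U,V)=(\gamma_1W,\gamma_2W)$. Under $(A_\beta)$ its kernel is spanned by the translational directions $(\partial_{y_l}U,\partial_{y_l}V)$. The condition $(A_\beta)$ excludes exactly the values $\beta_n^*$ at which extra kernel appears. The evenness in $y_2,y_3$ kills $l=2,3$, so $\bar\xi_n\to c\,(\partial_{y_1}U,\partial_{y_1}V)$. It remains to prove $c=0$, and this is the main obstacle. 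The radial shift $r$ is exactly the reduction parameter, so its degeneracy must be excluded using the non-degeneracy of the reduced energy in $r$.

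To do this, I would apply the local Pohozaev identity in the direction $x_1$ on $B_\delta(x^1)$ with $\delta$ small but fixed. I would test the equation for $(u_{\epsilon,k},v_{\epsilon,k})$ against $\partial_{x_1}$ and the linearized equation against $\partial_{x_1}\xi$. Combined, these give
\[
\int_{B_\delta(x^1)}\Big(\frac{\partial P}{\partial x_1}u_{\epsilon,k}\xi_{1,n}+\frac{\partial Q}{\partial x_1}v_{\epsilon,k}\xi_{2,n}\Big)=\text{boundary terms}.
\]
The boundary terms are $O(e^{-c\delta/\epsilon}\cdot)$-small relative to the main term, and they also absorb the interaction with neighbouring spikes at distance $\sim 2\pi r/k$. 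Using $(H'_2)$, the left side expands as
\[
c\Big(-\frac{m(a_1\gamma_1^2+a_2\gamma_2^2)}{r^{m+1}}\Big)\Big(\int W^2\Big)(1+o(1)),
\]
up to an explicit $\epsilon$-power factor. This has to be compared with the matching derivative of the interaction term $\sum_j e^{-|x^1-x^j|/\epsilon}$, which has the same order in the admissible range of $r$. The sign condition in $(H_2)$ together with the second-order balance that fixes $r$, i.e.\ the non-degenerate minimum of the reduced function $F(r)$, forces $c\,F''(r)\cdot(\text{positive})=o(\cdot)$, and hence $c=0$.

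Making this expansion precise requires pointwise estimates of $\varphi_k,\psi_k$ and of $\xi_n-c(\partial U,\partial V)$ at the scale $r^{-m-1}$. This is where the refined decay of $P',Q'$ in $(H'_2)$ enters. With $c=0$, Steps 1–3 give $\|\xi_n\|_{L^\infty}\to0$, contradicting the normalization $\|(\xi_{1,n},\xi_{2,n})\|_{L^\infty}=1$.
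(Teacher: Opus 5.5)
Your Steps 1--2 are sound. Step 3, which is the heart of the matter, does not work as set up. You take \(\Omega=B_\delta(x^1)\) with \(\delta\) fixed and claim the boundary terms are \(O(e^{-c\delta/\epsilon})\)-small. That is the mechanism of Section 2, where \(\epsilon\to0\). Here \(\epsilon\) and \(\delta\) are fixed and \(k\to\infty\), so \(e^{-c\delta/\epsilon}\) is an \(O(1)\) constant.

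Meanwhile, the quantity that must detect \(c\) is only of order \(|c|\,W(|x^1-x^2|/\epsilon)/k^2\), i.e. \(|c|\,r^{-m-2}\) up to logarithms. On a fixed sphere around \(x^1\) the individual boundary terms are much larger. For instance,
\[
\int_{\partial B_\delta(x^1)}(P-1)u_{\epsilon,k}\xi_1\nu_1\approx c\,a_1r^{-m_1}\int_{\partial B_\delta(x^1)}U_{\epsilon,x^1}\,\partial_1U_{\epsilon,x^1}\,\nu_1\neq0 .
\]
The cross terms involving \((\varphi_k,\psi_k)\) and \(\xi^*=\xi-c\sum_j\partial_r(Q_{1,\epsilon,x^j},Q_{2,\epsilon,x^j})\) are only known to be \(O(r^{-m}+e^{-|x^1-x^2|/\epsilon})\). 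In the exact identity these cancel only through the precise profiles of \(\varphi_k\) and \(\xi^*\), which you control only in norm.

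The paper instead takes \(\Omega=B_{\frac12|x^1-x^2|}(x^1)\), whose radius \(\sim\pi r/k\) grows with \(k\). On \(\partial\Omega\) every product then carries an extra factor \(e^{-(1-\sigma)|x^1-x^2|/\epsilon}\), comparable to \(r^{-(1-\sigma)m}\). So all error terms are \(O(r^{-(2-\sigma)m})\), which is negligible against \(r^{-m-2}\) exactly because \(m>2\). The neighbour interaction appears explicitly as \((C+o(1))W(|x^1-x^2|/\epsilon)/k^2\).

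This requires as input the global decomposition \(\xi=b\sum_j\partial_r(Q_{1,\epsilon,x^j},Q_{2,\epsilon,x^j})+\xi^*\) with \(\xi^*\in E_k\) and \(\|\xi^*\|_*\le C(r^{-m}+e^{-|x^1-x^2|/\epsilon})\). This comes from the linear theory of Lemma \ref{lemlinear}, together with the analogous gradient bounds on \(\partial\Omega\). Your \(L^\infty\) normalization does not supply these by itself.

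Your expansion of the left-hand side is also at the wrong order. The term you display, proportional to \(m(a_1\gamma_1^2+a_2\gamma_2^2)r^{-m-1}\), comes from \(P'(r)\int U\,\partial_{1}U\), which vanishes by oddness. Write \(\partial_1P(x^1+\epsilon y)=P'(r)+\epsilon P''(r)y_1+\cdots\). With your normalization \(\bar\xi_n\to c(\partial_{y_1}U,\partial_{y_1}V)\), the leading contribution is then
\[
-\tfrac{c}{2}\,\epsilon^4\Big(P''(r)\int_{\R^3}U^2+Q''(r)\int_{\R^3}V^2\Big),
\]
which is of order \(r^{-m-2}\); this is where \((H_2')\) is used.

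Likewise, matching against the first derivative of the interaction would only reproduce \(F'(r)=0\) and carries no information about \(c\). The identity has to be read at second order. The boundary side produces the second \(r\)-derivative of \(W(2r\sin\frac{\pi}{k}/\epsilon)\), of size \(\frac{4\pi^2}{\epsilon^2k^2}W(|x^1-x^2|/\epsilon)\) per spike. At the critical radius this exceeds the potential contribution by a factor of order \(\ln k\), and that is what makes \(F''(r_k)\) strictly negative at the relevant scale. Note that \(r_k\) maximizes \(F\); it is not a minimum. Without this quantitative second-order comparison, ``forces \(cF''(r)\cdot(\text{positive})=o(\cdot)\)'' remains an assertion rather than an argument.
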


\begin{rem}

In comparison to exploring the existence of concentrated solutions, the challenges introduced by the coupling terms in the system become notably more intricate when investigating the non-degeneracy result.  Delicate estimates becomes imperative for the interspecies interaction among the components of system \eqref{eq0}, a consideration absent in the study of single equations.
Our approach incorporates innovative observations, drawing inspiration from the characterization of Fermat points in the renowned Fermat problem. For a more comprehensive understanding, refer to the discussions in Section 3.

Additionally, unlike the approach in \cite{GZ-2022}, the proof of Theorem \ref{th1.3} necessitates the selection of distinct weighted norms, owing to differences in the form of the concentrated solutions.
\end{rem}

\begin{rem}
Recognizing the increased difficulty in establishing the non-degeneracy of the solution obtained in \ref{th1}, we plan to address this aspect in an upcoming paper.
\end{rem}
\medskip

The structure of this paper is as follows: firstly, in Section 2, we prove the non-degeneracy of the concentrated solutions obtained in Theorem A. Subsequently, utilizing this non-degeneracy result, in Section 3, we provide the proofs for Theorem \ref{th3} and Theorem \ref{th1} by constructing two types of dichotomy, namely, synchronized and segregated dichotomous solutions.
 Section 4 is dedicated to further discussing the non-degeneracy of the synchronized dichotomous solutions constructed in the Theorem \ref{th3}. Technical results and energy estimates are put in the appendix.

\medskip
\section{Non-degeneracy of the solutions in Theorem A}

In this part, we mainly consider the non-degenerate result to
the solutions obtained in  Theorem A, which is very crucial for us to construct these solutions in Theorems \ref{th3} and \ref{th1}.

Note that  $(u_{\epsilon},v_{\epsilon})$ solves system \eqref{eq0}, and $(\xi_{1\epsilon},\xi_{2\epsilon})$ satisfies
\begin{equation*}\label{eq1}
\begin{cases}
-\epsilon^{2}\Delta \xi_{1\epsilon} +P(|y|) \xi_{1\epsilon}=3\mu_1 u_\epsilon^2\xi_{1\epsilon}  +\beta \xi_{1\epsilon}v_\epsilon^2 +2\beta u_\epsilon v_\epsilon\xi_{2\epsilon}, &~\text{in}\;\mathbb R^3,\\[3mm]
-\epsilon^{2}\Delta \xi_{2\epsilon}+Q(|y|) \xi_{2\epsilon}= 3\mu_2 v_\epsilon^2\xi_{2\epsilon}  +\beta u_\epsilon^2\xi_{2\epsilon} +2\beta u_\epsilon v_\epsilon\xi_{1\epsilon}, &~\text{in}\;\mathbb R^3.
\end{cases}
\end{equation*}
Then applying the standard method (see \cite{GZ-2022} for example), we have the following Pohozaev identity.
\begin{lem}\label{lemlpoh1}
For $i=1,2,3$ and any $\Omega\subset\R^3$, it holds that
\begin{align*}
-&\int_{\partial\Omega}\epsilon^{2}\Big(\frac{\partial u_{\epsilon}}{\partial\nu}\frac{\partial \xi_{1\epsilon}}{\partial y_i}+\frac{\partial \xi_{1\epsilon}}{\partial\nu}\frac{\partial u_{\epsilon}}{\partial y_i}
+\frac{\partial v_{\epsilon}}{\partial\nu}\frac{\partial \xi_{2\epsilon}}{\partial y_i}+\frac{\partial \xi_{2\epsilon}}{\partial\nu}\frac{\partial v_{\epsilon}}{\partial y_i}\Big)
+\epsilon^{2}\int_{\partial\Omega}\big(\langle\nabla u_{\epsilon},\nabla\xi_{1\epsilon}\rangle\nu_i+\langle\nabla v_{\epsilon},\nabla\xi_{2\epsilon}\rangle\nu_i\big)\\
 +&\int_{\partial\Omega}(P(|y|)u_{\epsilon}\xi_{1\epsilon}+Q(|y|)v_{\epsilon}\xi_{2\epsilon})\nu_i-\int_{\partial\Omega}
\big(\mu_1 u_{\epsilon}^3\xi_{1\epsilon} +\mu_2 v_{\epsilon}^3\xi_{2\epsilon} +\beta u_{\epsilon}\xi_{1\epsilon}v_{\epsilon}^2 +\beta \xi_{2\epsilon}v_{\epsilon}u_{\epsilon}^2\big)\nu_i\\
&=\int_\Omega\Big(\frac{\partial{P}}{\partial y_i}u_{\epsilon}\xi_{1\epsilon}+\frac{\partial{Q}}{\partial y_i}v_{\epsilon}\xi_{2\epsilon}\Big).
\end{align*}

\end{lem}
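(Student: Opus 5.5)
The plan is to derive the identity by testing the first equation of the linearized system with $\partial_{y_i}u_\epsilon$ and the equation of $u_\epsilon$ with $\partial_{y_i}\xi_{1\epsilon}$, doing the same for the second components, adding, and integrating over $\Omega$. Equivalently, we differentiate the classical translation Pohozaev identity for the solution $(u_\epsilon,v_\epsilon)$ in the direction $(\xi_{1\epsilon},\xi_{2\epsilon})$. Concretely, consider
\[
I:=\int_\Omega\Big[\big(-\epsilon^2\Delta u_\epsilon+P u_\epsilon-\mu_1u_\epsilon^3-\beta u_\epsilon v_\epsilon^2\big)\partial_i\xi_{1\epsilon}+\big(-\epsilon^2\Delta\xi_{1\epsilon}+P\xi_{1\epsilon}-3\mu_1u_\epsilon^2\xi_{1\epsilon}-\beta v_\epsilon^2\xi_{1\epsilon}-2\beta u_\epsilon v_\epsilon\xi_{2\epsilon}\big)\partial_iu_\epsilon\Big],
\]
together with the analogous expression for $(v_\epsilon,\xi_{2\epsilon})$ with $Q,\mu_2$. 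Both sums vanish, since $(u_\epsilon,v_\epsilon)$ solves \eqref{eq0} and $(\xi_{1\epsilon},\xi_{2\epsilon})$ solves the linearized system.

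We then treat the three groups of terms separately.

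\emph{Laplacian terms.} Integrate by parts once:
\[
-\int_\Omega(\Delta u\,\partial_i\xi+\Delta\xi\,\partial_iu)=-\int_{\partial\Omega}(\partial_\nu u\,\partial_i\xi+\partial_\nu\xi\,\partial_iu)+\int_\Omega\partial_i\langle\nabla u,\nabla\xi\rangle .
\]
The last integral becomes $\int_{\partial\Omega}\langle\nabla u,\nabla\xi\rangle\nu_i$. After multiplying by $\epsilon^2$, these give the first two boundary integrals of the lemma.

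\emph{Potential terms.} We have $P(u\partial_i\xi+\xi\partial_iu)=\partial_i(Pu\xi)-\partial_iP\,u\xi$. This produces the boundary term $\int_{\partial\Omega}Pu_\epsilon\xi_{1\epsilon}\nu_i$ and the volume term $-\int_\Omega\partial_iP\,u_\epsilon\xi_{1\epsilon}$. Moving the volume term to the other side gives the right-hand side of the lemma.

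\emph{Nonlinear terms.} This is the only place requiring care, because the coupling terms must combine across the two components. We use
\[
u^3\partial_i\xi+3u^2\xi\partial_iu=\partial_i(u^3\xi),\qquad v^3\partial_i\xi_2+3v^2\xi_2\partial_iv=\partial_i(v^3\xi_2),
\]
and, for the $\beta$-terms, we collect
\[
uv^2\partial_i\xi_1+v^2\xi_1\partial_iu+2uv\xi_2\partial_iu+u^2v\partial_i\xi_2+u^2\xi_2\partial_iv+2uv\xi_1\partial_iv .
\]
This sum should equal $\partial_i(uv^2\xi_1+u^2v\xi_2)$. Expanding $\partial_i(uv^2\xi_1)+\partial_i(u^2v\xi_2)$ gives
\[
v^2\xi_1\partial_iu+2uv\xi_1\partial_iv+uv^2\partial_i\xi_1+2uv\xi_2\partial_iu+u^2\xi_2\partial_iv+u^2v\partial_i\xi_2,
\]
which matches term by term. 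So all nonlinear terms are exact derivatives, and the divergence theorem turns them into the boundary integral with $\nu_i$ appearing in the lemma.

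Combining the three groups yields the stated identity. The only expected obstacle is this cross-cancellation of the coupling terms, which works precisely because the symmetric coupling $\beta$ is the same in both equations. For arbitrary $\Omega$ the computations need regularity of $u_\epsilon,\xi_{j\epsilon}$, which follows from elliptic regularity since they are $H^1$ solutions with smooth nonlinearity. If $\Omega$ is unbounded, we additionally use the exponential decay of the functions to discard the contributions at infinity.
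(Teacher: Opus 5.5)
Your proposal is correct. It is exactly the standard argument the paper appeals to; the paper gives no proof beyond referring to the standard method. You test the equations for $(u_\epsilon,v_\epsilon)$ with $\partial_{y_i}\xi_{1\epsilon},\partial_{y_i}\xi_{2\epsilon}$ and the linearized equations with $\partial_{y_i}u_\epsilon,\partial_{y_i}v_\epsilon$, then sum and integrate by parts. Your check that the coupling terms combine into $\partial_{y_i}\big(\beta u_\epsilon v_\epsilon^2\xi_{1\epsilon}+\beta u_\epsilon^2 v_\epsilon\xi_{2\epsilon}\big)$ is right, and so is the sign bookkeeping that yields the right-hand side.
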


The main result in this section is as follows:
\begin{prop}\label{lema.1}
 Under the same assumptions imposed on $P(x),\,Q(x)$ as Theorem 1.1,
 and suppose that $(u_{\epsilon},v_{\epsilon})$ is a positive solution of
 system\eqref{eq0} obtained in Theorem A.  Let $(\xi_{1\epsilon},\xi_{2\epsilon})$  be a solution of
 $L_\epsilon(\xi_{1\epsilon},\xi_{2\epsilon})=0$. Then $(\xi_{1\epsilon},\xi_{2\epsilon})=0$.
\end{prop}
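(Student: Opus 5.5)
We argue by contradiction, following the blow-up strategy of \cite{GZ-2022}. Suppose there are $\epsilon_m\to0$ and nontrivial $(\xi_{1\epsilon_m},\xi_{2\epsilon_m})$ with $L_{\epsilon_m}(\xi_{1\epsilon_m},\xi_{2\epsilon_m})=0$, normalized by $\|\xi_{1\epsilon}\|_{L^\infty}+\|\xi_{2\epsilon}\|_{L^\infty}=1$. For each spike center $\eta^j$ of the solution in Theorem A we set $\tilde\xi_{i\epsilon}^j(y):=\xi_{i\epsilon}(\epsilon y+\eta^j)$. By elliptic estimates, $\tilde\xi^j_{i\epsilon}\to\xi^j_i$ in $C^1_{loc}(\R^3)$. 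Since $P(\eta^j)\to1$, $Q(\eta^j)\to1$, the separation $|\eta^i-\eta^j|/\epsilon\ge|\ln\epsilon|^{1/2}$ holds, and $\|(\omega_\epsilon,\varpi_\epsilon)\|$ is small, the limit $(\xi^j_1,\xi^j_2)$ solves the linearization of \eqref{eqQ1} at $(U,V)=(\gamma_1W,\gamma_2W)$. Under $(A_\beta)$ (which excludes the exceptional values $\beta_n^*$) this limit problem is nondegenerate, so
\[
(\xi^j_1,\xi^j_2)=\sum_{l=1}^3 b_{j,l}\Big(\frac{\partial U}{\partial y_l},\frac{\partial V}{\partial y_l}\Big).
\]
The first main step is to prove that all $b_{j,l}=0$. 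For this we apply the local Pohozaev identity of Lemma \ref{lemlpoh1} on $\Omega=B_{d}(\eta^j)$ with $d$ a small multiple of $\min_{i\ne j}|\eta^i-\eta^j|$, together with the analogous identity for $(u_\epsilon,v_\epsilon)$ itself, which locates $\eta^j$.

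The boundary terms are exponentially small relative to the leading order, by the decay of $u_\epsilon,v_\epsilon$ away from the spikes and the exponential decay of $\xi_{i\epsilon}$ outside $\cup_jB_{R\epsilon}(\eta^j)$. The latter decay follows from a comparison argument, because there $3\mu_1u_\epsilon^2+\beta v_\epsilon^2$ and the other coefficients are small compared with $P,Q\ge\alpha$. The right-hand side is expanded around $\eta^j$. Since $0$ is a critical point and $\eta^j\to0$, we have $\nabla P(\epsilon y+\eta^j)=\nabla P(\eta^j)+\epsilon D^2P(\eta^j)y+o(\epsilon)$. We also use the identity obtained by differentiating the equation for $(u_\epsilon,v_\epsilon)$ with respect to the center. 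The leading term is then
\[
\epsilon^4\sum_l b_{j,l}\,\partial_l\partial_i\big(\gamma_1^2P+\gamma_2^2Q\big)(0)\int_{\R^3}W^2+o(\epsilon^4),
\]
together with interaction contributions of size $e^{-|\eta^i-\eta^j|/\epsilon}$ coming from the other spikes. By $(H_1)$ the Hessian of $\gamma_1^2P+\gamma_2^2Q$ is invertible. Hence, once the interaction contributions are shown to be of lower order (or are absorbed using the balance relation between the spike interactions and $\nabla(\gamma_1^2P+\gamma_2^2Q)(\eta^j)$ coming from Theorem A), we obtain $b_{j,l}=0$ for all $j$ and $l$.

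The second step is to conclude. Since every blow-up limit vanishes, $\xi_{i\epsilon}\to0$ uniformly on $\cup_jB_{R\epsilon}(\eta^j)$ for every fixed $R$. Outside these balls the maximum principle, using that the potential dominates the nonlinear coefficients, gives $|\xi_{i\epsilon}|\le C\sup_{\cup_j\partial B_{R\epsilon}(\eta^j)}|\xi_{i\epsilon}|+o(1)$. Together these contradict $\|\xi_{1\epsilon}\|_\infty+\|\xi_{2\epsilon}\|_\infty=1$.

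The main obstacle is the Pohozaev step. The spikes interact at order $e^{-|\eta^i-\eta^j|/\epsilon}$, which for separations near $|\ln\epsilon|^{1/2}$ is not negligible against $\epsilon^4$. The coupling terms $\beta u_\epsilon v_\epsilon\xi$ also mix the two components, so the expansion must be organized in the synchronized direction $(\gamma_1,\gamma_2)$. To handle the interactions, I would first sharpen the error estimate for $(\omega_\epsilon,\varpi_\epsilon)$ to a pointwise one near each spike. I would then expand the Pohozaev identity to second order, so that the resulting matrix acting on $(b_{j,l})$ is the Hessian of the reduced energy of Theorem A. That Hessian is nondegenerate by the construction in \cite{PP16} combined with $(H_1)$.
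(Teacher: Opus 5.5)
Your skeleton is the same as the paper's: blow-up at each $\eta^j$, identification of the limit in the kernel of the linearization at $(U,V)$, a local Pohozaev identity plus $(H_1)$ to kill the coefficients, and the maximum principle to conclude. Your final step is sound.

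\textbf{Where the argument breaks.} The gap is the step $b_{j,l}=0$, which you leave conditional. You rightly note that the spike interactions are not $o(\epsilon^4)$. This also undercuts your earlier claim that on $B_d(\eta^j)$ the boundary terms are exponentially small relative to the leading order: with $d$ a fixed fraction of the separation, they carry exactly these interactions. The zeroth-order term $\nabla P(\eta^j)\int u_\epsilon\xi_{1\epsilon}$ has the same problem, since $|\eta^j|\gg\epsilon$ while $\int U\tilde\xi^j_1\to0$ at no quantified rate.

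Your remedy is that a second-order expansion produces the Hessian of the reduced energy, and that this Hessian is nondegenerate ``by the construction in \cite{PP16} combined with $(H_1)$''. This is not justified. $(H_1)$ makes the potential part of that matrix invertible, but the interaction part is not of lower order. Invertibility of the sum is a property of the specific spike configuration, and the variational existence argument of \cite{PP16} does not give it. The balance relation from Theorem A cancels first-order quantities, not the second-order ones that multiply $b_{j,l}$.

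\textbf{Why the step cannot hold as stated.} Under $(H_1)$ the potentials are radial. Hence for every skew-symmetric $3\times3$ matrix $A$, the rotated pair $(u_\epsilon(e^{tA}x),v_\epsilon(e^{tA}x))$ is again a solution. Differentiating at $t=0$ shows that
\[
\xi_A:=(Ax\cdot\nabla u_\epsilon,\,Ax\cdot\nabla v_\epsilon)\in H^1\times H^1
\]
satisfies $L_\epsilon\xi_A=0$. A solution with $n\ge2$ spikes at mutual distance $\ge\epsilon|\ln\epsilon|^{1/2}$ is not radial, so $\xi_A\neq0$ for suitable $A$.

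Near $\eta^j$ one has $\xi_A\approx\epsilon^{-1}\big(A\eta^j\cdot\nabla U,\,A\eta^j\cdot\nabla V\big)$; the term $Ay\cdot\nabla U$ vanishes because $U$ is radial. So the blow-up coefficients of $\xi_A/\|\xi_A\|_\infty$ are proportional to $A\eta^j$. Since your concluding step is valid, they cannot all vanish.

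Thus Proposition \ref{lema.1} is false in $H^1\times H^1$. A correct argument must work in a symmetry class that excludes these rotational modes, for example functions even in $y_2,y_3$, as in the space $H$ used for the gluing. Even there, it must still estimate the interaction terms honestly.

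The paper does not supply this either. Its Step 2 applies Lemma \ref{lemlpoh1} on the fixed ball $B_\delta(\eta^j)$, which contains all the spikes, and Taylor-expands $\nabla P$ about $y_0$. It then declares the boundary terms $O(e^{-\sigma/\epsilon})$ and the remainder $o(\epsilon^4)$. In other words, it discards precisely the terms you identified as the main obstacle.
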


\begin{proof}
We argue by contradiction, we suppose that there exist
$\epsilon_n\rightarrow 0$ as $n\rightarrow \infty$, satisfying $(\xi_{1\epsilon_n},\xi_{2\epsilon_n})\in
( H^1(\R^3))^2, \|(\xi_{1\epsilon_n},\xi_{2\epsilon_n})\|_{L^{\infty}(\R^{3})}=1$ and
$L_{\epsilon_n}(\xi_{1\epsilon_n},\xi_{2\epsilon_n})=0$.

Fix $j\in \{1,\cdots,m\}$ and let for $i=1,2, $ $$\xi_{i \epsilon_{n}}^{j}(y)=\xi_{i \epsilon_{n}}(\epsilon_{n} y+\eta^{j}).$$

Step 1. We claim $$
(\xi_{1 \epsilon_{n}}^{j}(y),\xi_{2 \epsilon_{n}}^{j}(y))\rightarrow \sum_{l=1}^3b_j^l(\frac{\partial U}{\partial y_l},
\frac{\partial V}{\partial y_l})
$$
uniformly in $C^1(B_R(0))$ for any $R>0$, where  $b_j^l(j=1,2,...,m)$
are some constants.

By assumption we get $|\xi^{j}_{i \epsilon_{n}}|\leq 1$, and we may assume that $\xi^{j}_{i \epsilon_{n}}\rightarrow\xi^{j}_i$ in $C_{loc}(\R^3)$.
 In view of $L_{\epsilon_{n}}(\xi_{1\epsilon_{n}},\xi_{2\epsilon_{n}})=0$ and
 $$\xi^{j}_{i \epsilon_{n}}(y)=\xi_{i \epsilon_{n}}(\epsilon_{n} y+\eta^{j}),(i=1,2;j=1,2,...,m),$$
 we know that
 \begin{eqnarray}\label{eq1}
\begin{cases}
-\Delta \xi^{j}_{1\epsilon_{n}}(y) +P(\epsilon_{n} y+\eta^{j}) \xi^{j}_{1\epsilon_{n}}(y)=3
\mu_1 u^2_{\epsilon_{n}}(\epsilon_{n} y+\eta^{j})\xi^{j}_{1\epsilon_{n}}(y)
+\beta \xi^{j}_{1\epsilon_{n}}(y)v^2_{\epsilon_{n}}(\epsilon_{n} y+\eta^{j})\\
 \quad \quad \quad \quad \quad \quad \quad \quad \quad \quad \quad \quad \quad \quad \quad \quad \quad+2\beta u_{\epsilon_{n}}(\epsilon_{n} y+\eta^{j})v_{\epsilon_{n}}(\epsilon_{n} y+\eta^{j})\xi^{j}_{2\epsilon_{n}}(y),\\
-\Delta \xi^{j}_{2\epsilon_{n}}(y)+Q(\epsilon_{n} y+\eta^{j}) \xi^{j}_{2\epsilon_{n}}(y)= 3\mu_2 v_{\epsilon_{n}}^2(\epsilon_{n} y+\eta^{j})\xi^{j}_{2\epsilon_{n}}(y)  +
\beta u_{\epsilon_{n}}^2(\epsilon_{n} y+\eta^{j})\xi^{j}_{2\epsilon_{n}}(y)\\
\quad \quad \quad \quad \quad \quad \quad \quad \quad \quad \quad \quad \quad \quad \quad \quad \quad +2\beta u_{\epsilon_{n}}(\epsilon_{n} y+\eta^{j}) v_{\epsilon_{n}}(\epsilon_{n} y+\eta^{j})\xi^{j}_{1\epsilon_{n}}(y).
\end{cases}
\end{eqnarray}
 Then as $\epsilon_{n}\rightarrow 0,$ in \eqref{eq1}, $(\xi^j_1,\xi^j_2)$
satisfies
\begin{align}\label{eqxi}
\begin{cases}
\displaystyle-\Delta\xi_1^{j}+\xi_1^{j}=3\mu_1 U^2\xi_1^{j}+\beta V^2\xi_1^{j}
+2\beta UV\xi_2^{j},&~\text{in}\;\mathbb R^3,\\[1mm]
\displaystyle-\Delta\xi_2^{j}+\xi_2^{j}=3\mu_2 V^2\xi_2^{j}
+\beta U^2\xi_2^{j}+2\beta UV\xi_1^{j},&~\text{in}\;\mathbb R^3,
\end{cases}
\end{align}
which gives
$$(\xi^j_1,\xi^j_2)= \sum_{l=1}^3b_j^l\big(\frac{\partial U}{\partial y_l},
\frac{\partial V}{\partial y_l}\big).$$
Therefore we complete the proof of the claim.

Step 2. It holds that
$\xi^{j}_{i \epsilon_{n}}\rightarrow0,\ i=1,2$
uniformly in $C^1(B_R(0))$ for any $R>0$.

Let $\delta>0$ be a fixed small constant. We apply  Lemma \ref{lemlpoh1} with
$\Omega=B_{\delta}(\eta^{j})$.
Define the following bilinear form
\begin{align}
&\mathcal L(\vec{\textbf{u}}_{\epsilon_{n}},\vec\xi,B_{\delta}(\eta^{j}))\cr
&=
-\epsilon_{n}^{2}\int_{\partial B_{\delta}(\eta^{j})}\Big(\frac{\partial
u_{\epsilon_{n}}}{\partial\nu}\frac{\partial \xi_{1\epsilon_{n}}}{\partial y_l}+\frac{\partial
\xi_{1\epsilon_{n}}}{\partial\nu}\frac{\partial u_{\epsilon_{n}}}{\partial y_l}
+\frac{\partial v_{\epsilon_{n}}}{\partial\nu}\frac{\partial \xi_{2\epsilon_{n}}}{\partial y_l}+\frac{\partial \xi_{2\epsilon_{n}}}{\partial\nu}\frac{\partial v_{\epsilon_{n}}}{\partial y_l}\Big)\cr
&+\epsilon_{n}^{2}\int_{\partial B_{\delta}(\eta^{j})}\langle\nabla
u_{\epsilon_{n}},\nabla\xi_{1\epsilon_{n}}\rangle\nu_l+\langle\nabla
v_{\epsilon_{n}},\nabla\xi_{2\epsilon_{n}}\rangle\nu_l
 +\int_{\partial B_{\delta}(\eta^{j})}(u_{\epsilon_{n}}\xi_{1\epsilon_{n}}+v_{\epsilon_{n}}\xi_{2\epsilon_{n}})\nu_l,
 \end{align}
where $\vec {\textbf{u}}_{\epsilon_{n}}=(u_{\epsilon_{n}},v_{\epsilon_{n}}),\vec\xi=(\xi_{1\epsilon_{n}},\xi_{2\epsilon_{n}})$, and $\nu$ is the outward unit normal of $\partial B_{\delta}(\eta^{j})$.
Then Lemma \ref{lemlpoh1} implies that
\begin{align}\label{a.4.1}
&\int_{B_{\delta}(\eta^{j})}\Big(\frac{\partial{P}}{\partial y_l}u_{\epsilon_{n}}\xi_{1\epsilon_{n}}+\frac{\partial{Q}}{\partial y_l}v_{\epsilon_{n}}\xi_{2\epsilon_{n}}\Big)\cr
&=\mathcal L(\vec{\textbf{u}}_{\epsilon_{n}},\vec\xi,B_{\delta}(\eta^{j}))
+\int_{\partial B_{\delta}(\eta^{j})}((P(|y|)-1)u_{\epsilon_{n}}\xi_{1\epsilon_{n}}
+(Q(|y|)-1)v_{\epsilon_{n}}\xi_{2\epsilon_{n}})\nu_l\cr
&-\int_{\partial B_{\delta}(\eta^{j})}
(\mu_1u_{\epsilon_{n}}^3\xi_{1\epsilon_{n}} +\mu_2 v_{\epsilon_{n}}^3\xi_{2\epsilon_{n}} +\beta u_{\epsilon_{n}} \xi_{1\epsilon_{n}}v_{\epsilon_{n}}^2 +\beta \xi_{2\epsilon_{n}}v_{\epsilon_{n}}u_{\epsilon_{n}}^2)\nu_l.
\end{align}
 Also, from Lemmas  \ref{lemb.1} to \ref{lemb.3}, it is easy to prove that there exists a small positive constant $\sigma$ such that
\begin{align}\label{a.4.2}
\mathcal L(\vec{\textbf{u}}_{\epsilon_{n}},\vec\xi,B_{\delta}(\eta^{j}))
=O(e^{-\frac{\sigma}{\epsilon_{n}}}),
\end{align}

\begin{align}\label{a.4.3}
\int_{\partial\Omega}\big[(P(|y|)-1)u_{\epsilon_{n}}\xi_{1\epsilon_{n}}
+(Q(|y|)-1)v_{\epsilon_{n}}\xi_{2\epsilon_{n}}\big]\nu_i=O(e^{-\frac{\sigma}{\epsilon_{n}}})
\end{align}
and
\begin{align}\label{a.4.4}
\int_{\partial\Omega}
(\mu_1 u_{\epsilon_{n}}^3\xi_{1\epsilon_{n}} +\mu_2 v_{\epsilon_{n}}^3\xi_{2\epsilon_{n}} +\beta u_{\epsilon_{n}} \xi_{1\epsilon_{n}}v_{\epsilon_{n}}^2 +\beta \xi_{2\epsilon_{n}}v_{\epsilon_{n}}u_{\epsilon_{n}}^2)\nu_i=O(e^{-\frac{\sigma}{\epsilon_{n}}}).
\end{align}
Since $y_{0}$ is a critical point of $P(x),\,Q(x)$, by Taylor's expansion we have
\begin{align*}
\frac{\partial P(y)}{\partial y_{l}}=\frac{\partial P(y)}{\partial y_{l}}-\frac{\partial P(y_{0})}{\partial y_{l}}
=\sum_{s=1}^{3}(y_{s}-y_{0,s})\frac{\partial^{2} P(y_{0})}{\partial y_{l}\partial y_{s}}
+o(|y-y_{0}|),
\end{align*}
\begin{align*}
\frac{\partial Q(y)}{\partial y_{l}}=\frac{\partial Q(y)}{\partial y_{l}}-\frac{\partial Q(y_{0})}{\partial y_{l}}
=\sum_{s=1}^{3}(y_{s}-y_{0,s})\frac{\partial^{2} Q(y_{0})}{\partial y_{l}\partial y_{s}}
+o(|y-y_{0}|),
\end{align*}
which implies that
\begin{align}\label{a.4.5}
&\int_{B_{\delta}(\eta^{j})}\Big(\frac{\partial{P}}{\partial y_l}u_{\epsilon_{n}}\xi_{1\epsilon_{n}}+\frac{\partial{Q}}{\partial y_l}v_{\epsilon_{n}}\xi_{2\epsilon_{n}}\Big)
\cr
&=\sum_{s=1}^{3}\int_{B_{\delta}(\eta^{j})}(y_{s}-y_{0,s})\frac{\partial^{2} P(y_{0})}{\partial y_{l}\partial y_{s}}u_{\epsilon_{n}}\xi_{1\epsilon_{n}}
+\sum_{s=1}^{3}\int_{B_{\delta}(\eta^{j})}(y_{s}-y_{0,s})\frac{\partial^{2} Q(y_{0})}{\partial y_{l}\partial y_{s}}v_{\epsilon_{n}}\xi_{2\epsilon_{n}}
\cr
&\quad+o\Big(\int_{B_{\delta}(\eta^{j})} |y-y_{0}|( |u_{\epsilon_{n}}\xi_{1\epsilon_{n}}^{j}|+|v_{\epsilon_{n}}\xi_{2\epsilon_{n}}^{j}|)\Big)\cr
&=\epsilon_{n}^{4}\sum_{s=1}^{3}\frac{\partial^{2} P(y_{0})}{\partial y_{l}\partial y_{s}}\int_{B_{\frac{\delta}{\epsilon_{n}}}(0)}y_{s}U\xi_{1\epsilon_{n}}^{j}
+\epsilon_{n}^{4}\sum_{s=1}^{3}\frac{\partial^{2}Q(y_{0})}{\partial y_{l}\partial y_{s}}\int_{B_{\frac{\delta}{\epsilon_{n}}}(0)}y_{s}V\xi_{2\epsilon_{n}}^{j}
+o(\epsilon_{n}^{4})\cr
&=\epsilon_{n}^{4}\sum_{s=1}^{3}\frac{\partial^{2} P(y_{0})}{\partial y_{l}\partial y_{s}}\int_{B_{\frac{\delta}{\epsilon_{n}}}(0)}y_{s}U\sum_{t=1}^3b_j^t\frac{\partial U}{\partial y_t}
+\epsilon_{n}^{4}\sum_{s=1}^{3}\frac{\partial^{2} Q(y_{0})}{\partial y_{l}\partial y_{s}}\int_{B_{\frac{\delta}{\epsilon_{n}}}(0)}y_{s}V\sum_{t=1}^3b_j^t\frac{\partial V}{\partial y_{t}}
+o(\epsilon_{n}^{4})\cr
&=\epsilon_{n}^{4}\sum_{s=1}^{3}\frac{\partial^{2} P(y_{0})}{\partial y_{l}\partial y_{s}}\int_{B_{\frac{\delta}{\epsilon_{n}}}(0)}y_{s}Ub_j^s\frac{\partial U}{\partial y_{s}}
+\epsilon_{n}^{4}\sum_{s=1}^{3}\frac{\partial^{2} Q(y_{0})}{\partial y_{l}\partial y_{s}}\int_{B_{\frac{\delta}{\epsilon_{n}}}(0)}y_{s}Vb_j^{s}\frac{\partial V}{\partial y_{s}}
+o(\epsilon_{n}^{4})\cr
&=\epsilon_{n}^{4}\sum_{s=1}^{3}\gamma^{2}_{1}\frac{\partial^{2} P(y_{0})}{\partial y_{l}\partial y_{s}}\int_{B_{\frac{\delta}{\epsilon_{n}}}(0)}y_{s}Wb_j^s\frac{\partial W}{\partial y_{s}}
+\epsilon_{n}^{4}\sum_{s=1}^{3}\gamma^{2}_{1}\frac{\partial^{2} Q(y_{0})}{\partial y_{l}\partial y_{s}}\int_{B_{\frac{\delta}{\epsilon_{n}}}(0)}y_{s}Wb_j^s\frac{\partial W}{\partial y_{s}}
+o(\epsilon_{n}^{4}).
\end{align}
 From \eqref{a.4.1} to \eqref{a.4.5}, we get
$$
\sum_{s=1}^{3}\Big(\gamma^{2}_{1}\frac{\partial^{2} P(y_{0})}{\partial y_{l}\partial y_{s}}+\gamma^{2}_{2}\frac{\partial^{2} Q(y_{0})}{\partial y_{l}\partial y_{s}}\Big)b_j^s=o(1),
$$
which implies that
$$
b_j^s=0,\,\,\,s=1,2,3.
$$

Step 3.  We are in a position to conclude Proposition \ref{lema.1}.

From  step 1 and step 2, we know that for any fixed $R>0$
$$
|(\xi_{1\epsilon_{n}},\xi_{2\epsilon_{n}})|\leq Ce^{-\frac{\theta R}{4}},\,\,\,\,\R^{3}\setminus B_{R\epsilon_{n}}(\eta^{j})
$$
and
$$
\xi_{i\epsilon_{n}}^{j}\rightarrow 0,\,\,\,\text{in}\,\,\,B_{R}(0),\,\,\,i=1,2,
$$
i.e.
$$
\xi_{i\epsilon_{n}}\rightarrow 0,\,\,\,\text{in}\,\,\,B_{R\epsilon_{n}}(\eta^{j}),\,\,\,i=1,2.
$$
Therefore, for any $x\in \R^{3}$ and sufficiently large $R>0,$ we can conclude that $\|(\xi_{1\epsilon_{n}},\xi_{2\epsilon_{n}})\|=o(1)$ which contradicts with $\|(\xi_{1\epsilon_{n}},\xi_{2\epsilon_{n}})\|_{L^{\infty}(\R^{3})}=1.$
\end{proof}

\medskip
\section{Existence of  dichotomous peak-solutions}
In this section, we construct dichotomous synchronized  and segregated solutions respectively in the following subsections.

Letting
\begin{eqnarray}\label{eqs3.1}
I(u,v)
&=&\frac{1}{2}\ds\int_{\R^3}\big(\epsilon^{2}|\nabla u|^2+P(x)u^2+\epsilon^{2}|\nabla v|^2+Q(x)v^2\big)
\cr
&&-\frac{1}{4}\ds\int_{\R^3}(\mu_1 u^4+\mu_2 v^4)
-\ds\frac{1}{2}\int_{\R^3}\beta u^2v^2 ,
\end{eqnarray}
then $I\in C^2$ and its critical points are solutions of \eqref{eq0}.


\subsection{Dichotomy for Synchronized solutions}
Let $D_k:=\Big[\big(\frac{m}{2\pi}-\delta\big)k\ln k,\big(\frac{m}{2\pi}+\delta\big)k\ln k\Big]$
with $\delta>0$ a small constant, where $m=\min\{m_1,m_2\}$.
Denote $W_{\epsilon,x^j}:=W_\epsilon(x-x^j)=W\big(\frac{x-x^j}{\epsilon}\big)$ and
\begin{align*}
Y_j=\frac{\partial U_{\epsilon,x^j}}{\partial r}, Z_j=\frac{\partial V_{\epsilon,x^j}}{\partial r},\,j=1,2\cdots,k,
\end{align*}
\begin{equation}\label{eqs03.2}
\begin{array}{ll}
E_k=\Bigl\{(u,v)\in H_{\epsilon,P}\times H_{\epsilon,Q},
 \ds\sum\limits_{j=1}^k\ds\int_{\R^3}W_{\epsilon,x^j}^{2}\Bigl(Y_ju+Z_jv\Bigr)=0\Bigr\}.
 \end{array}
\end{equation}

Define
$$
 J(\varphi_{k},\psi_{k})=I(S_{\epsilon,k}(x)+\varphi_{k}(x),T_{\epsilon,k}(x)+\psi_{k}(x)),\ \ \ (\varphi_{k},\psi_{k})\in E_k,
$$
where
$S_{\epsilon,k}(x):=u_{\epsilon}(x)+\sum\limits_{j=1}^{k}U_{\epsilon,x^j}(x)$ and
 $T_{\epsilon,k}(x):=v_{\epsilon}(x)+\sum\limits_{j=1}^{k}V_{\epsilon,x^{j}}(x).
 $
Then we can expand $J(\varphi_{k},\psi_{k})$ as
\begin{eqnarray}\label{eqs03.3}
J(\varphi_{k},\psi_{k})=J(0,0)+\ell(\varphi_{k},\psi_{k})+\frac{1}{2}\langle L(\varphi_{k},\psi_{k}),(\varphi_{k},\psi_{k})\rangle-R(\varphi_{k},\psi_{k}),\ \ (\varphi_{k},\psi_{k})\in E_k,
\end{eqnarray}
where
\begin{eqnarray}\label{eqs2.4}
\ell(\varphi_{k},\psi_{k})
&=&-\mu_1\int_{\R^3}\Bigl(S_{\epsilon,k}^3-u^{3}_{\epsilon}-\sum\limits_{j=1}^kU_{\epsilon,x^j}^3\Bigr)\varphi_{k} -\mu_2\int_{\R^3}\Bigl(T_{\epsilon,k}^3-v^{3}_{\epsilon}-\sum\limits_{j=1}^kV_{\epsilon,x^j}^3\Bigr)\psi_{k}
\cr
&&
+\int_{\R^3}\Big(\sum_{j=1}^{k}(P(x)-1)U_{\epsilon,x^{j}}\varphi_{k}+\sum_{j=1}^{k}(Q(x)-1)V_{\epsilon,x^{j}}\psi_{k}\Big)
\cr
&&
-\beta\int_{\R^3}\Bigl(S_{\epsilon,k}^2T_{\epsilon,k}-u_{\epsilon}^2v_{\epsilon}-\sum\limits_{j=1}^kU_{\epsilon, x^j}V_{\epsilon, x^j}^2\Bigr)\psi_{k} \cr
&&-\beta\int_{\R^3}\Bigl(T_{\epsilon,k}^2S_{\epsilon,k}-v_{\epsilon}^2u_{\epsilon}-\sum\limits_{j=1}^kU_{\epsilon, x^j}^2V_{\epsilon, x^j}\Bigr)\varphi_{k},
\end{eqnarray}

\begin{eqnarray}
\langle L(\varphi_{k},\psi_{k}),(\varphi_{k},\psi_{k})\rangle
&=&\int_{\R^3}\Bigl(\epsilon^{2}|\nabla\varphi_{k}|^2+P(x)\varphi_{k}^2-3\mu_1 S_{\epsilon, k}^2\varphi_{k}^{2}\Bigr)
+\int_{\R^3}\Bigl(\epsilon^{2}|\nabla\psi_{k}|^2+Q(x)\psi_{k}^2-3\mu_2 T_{\epsilon, k}^2
\psi_{k}^2\Bigr)
\cr
&&
-\beta\int_{\R^3}\Bigl(S_{\epsilon, k}^2\psi_{k}^2+4S_{\epsilon, k} T_{\epsilon, k}\varphi_{k}\psi_{k}
+T_{\epsilon, k}^2\varphi_{k}^2\Bigr),
\end{eqnarray}
is the quadratic term, and $R(\varphi,\psi,\xi)$ is higher order term, defined by
\begin{eqnarray}\label{wqf2.5}
R(\varphi_{k},\psi_{k})
&=&
-\int_{\R^3}\Bigl(\mu_1 S_{\epsilon, k}\varphi_{k}^3+\mu_2 T_{\epsilon, k}\psi_{k}^3+\frac{\mu_1}{4}\varphi_{k}^4+\frac{\mu_2}{4}\psi_{k}^4\Bigr)
\cr
&&
-\beta\int_{\R^3}\big(S_{\epsilon, k}\varphi_{k}\psi_{k}^2+T_{\epsilon, k}\varphi_{k}^2\psi_{k}+\frac{1}{2}\varphi_{k}^2\psi_{k}^2\big).
\end{eqnarray}
\begin{lem} \label{lm2.1}
There is a constant $C>0$, independent of $k$, such that for any $r\in D_k$,
$$
L(\varphi_{k},\psi_{k})\leq C\|(\varphi_{k},\psi_{k})\|,\  (\varphi_{k},\psi_{k})\in E_k.
$$
\end{lem}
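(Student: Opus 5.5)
I read the inequality as the statement that the quadratic form defined above gives a bounded linear operator $L:E_k\to E_k$ (via the Riesz representation in $H$), with operator norm bounded uniformly in $k$ and $r\in D_k$. That is, I will show
\begin{equation*}
\big|\langle L(\varphi_k,\psi_k),(g,h)\rangle\big|\le C\|(\varphi_k,\psi_k)\|\,\|(g,h)\|,\qquad (\varphi_k,\psi_k),(g,h)\in E_k .
\end{equation*}
Taking the supremum over $\|(g,h)\|\le1$ then gives $\|L(\varphi_k,\psi_k)\|\le C\|(\varphi_k,\psi_k)\|$. Throughout, $\epsilon>0$ is fixed, as in the Remark after Theorem \ref{th3}, and only uniformity in $k$ matters. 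The proof is a term-by-term estimate of the bilinear form.

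\emph{Step 1 (quadratic part).} The terms $\int\epsilon^2\nabla\varphi_k\nabla g+P\varphi_k g$ and $\int\epsilon^2\nabla\psi_k\nabla h+Q\psi_k h$ are exactly the inner products of $H_{\epsilon,P}$ and $H_{\epsilon,Q}$. By Cauchy--Schwarz they are bounded by $\|(\varphi_k,\psi_k)\|\,\|(g,h)\|$.

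\emph{Step 2 (uniform $L^\infty$ bound on the profile).} I claim $\|S_{\epsilon,k}\|_{L^\infty}+\|T_{\epsilon,k}\|_{L^\infty}\le C$ with $C$ independent of $k$ and $r\in D_k$.
\begin{itemize}
\item The term $u_\epsilon$ (resp. $v_\epsilon$) is a fixed bounded function, since $\epsilon$ is fixed.
\item For the spikes, recall $U=\gamma_1W$, $V=\gamma_2W$ with $W(y)\le Ce^{-|y|}$.
\item The points $x^j$ lie on a circle of radius $r\sim \frac{m}{2\pi}k\ln k$. Hence $|x^i-x^j|\ge c\,\frac{r}{k}|i-j|'$, where $|i-j|'$ is the cyclic distance, and $\frac{r}{k}\sim \ln k\to\infty$.
\item Fix $x$ and let $x^{j_0}$ be the nearest point. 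Then $|x-x^j|\ge \frac12|x^j-x^{j_0}|$ for $j\neq j_0$, so
\begin{equation*}
\sum_{j=1}^kU_{\epsilon,x^j}(x)\le C+C\sum_{l\ge1}e^{-c l\ln k/\epsilon}\le C .
\end{equation*}
\end{itemize}
This separation estimate is the only place where the geometry of the configuration enters. It is the step I would write most carefully, although it is routine.

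\emph{Step 3 (potential terms).} Each remaining term has the form $\int a(x)\,\phi\, \chi$. Here $a\in\{3\mu_1S_{\epsilon,k}^2,\ 3\mu_2T_{\epsilon,k}^2,\ \beta S_{\epsilon,k}^2,\ \beta T_{\epsilon,k}^2,\ 2\beta S_{\epsilon,k}T_{\epsilon,k}\}$, and $\phi,\chi$ are components of $(\varphi_k,\psi_k)$ and $(g,h)$. By Step 2, $\|a\|_{L^\infty}\le C$. Using $P,Q\ge\alpha$ from $(A_1)$,
\begin{equation*}
\Big|\int a\,\phi\,\chi\Big|\le C\|\phi\|_{L^2}\|\chi\|_{L^2}\le \frac{C}{\alpha}\|(\varphi_k,\psi_k)\|\,\|(g,h)\| .
\end{equation*}
Alternatively one can use Hölder with $L^4$ norms and the Sobolev embedding $H^1(\R^3)\hookrightarrow L^4$, whose constant depends on the fixed $\epsilon$ but not on $k$.

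Combining Steps 1 and 3 gives the bound with $C$ independent of $k$ and $r\in D_k$. The symmetry constraints defining $H_{\epsilon,P}$, $H_{\epsilon,Q}$ and the orthogonality condition in $E_k$ play no role beyond restricting the domain. The only genuine point is Step 2, namely that the $k$ spikes, separated by distances of order $\ln k$, do not accumulate in amplitude.
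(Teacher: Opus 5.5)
Your proof is correct. The paper states Lemma \ref{lm2.1} without proof, treating it as routine in the spirit of \cite{PW13}, and your term-by-term bound is exactly the standard argument being omitted: Cauchy--Schwarz for the $H_{\epsilon,P}\times H_{\epsilon,Q}$ inner-product part, plus a $k$-uniform $L^\infty$ bound on $S_{\epsilon,k},T_{\epsilon,k}$ (from the separation of the $x^j$) combined with $P,Q\ge\alpha$. Your operator-norm reading also implies the quadratic-form version $\langle L(\varphi_k,\psi_k),(\varphi_k,\psi_k)\rangle\le C\|(\varphi_k,\psi_k)\|^2$, so that interpretation loses nothing.
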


\begin{lem}\label{lm2.2}
There exist a constant $ \varrho>0$, such that for any $ r\in D_k$,
$$
L(\varphi_{k},\psi_{k})\geq \varrho\|(\varphi_{k},\psi_{k})\|,\  \ (\varphi_{k},\psi_{k})\in E_k.
$$
\end{lem}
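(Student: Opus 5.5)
We read $L(\varphi_k,\psi_k)$ here as the element of $E_k$ that represents the bilinear form $\langle L(\varphi_k,\psi_k),\cdot\rangle$ restricted to $E_k$, that is, the linearized operator composed with the projection onto $E_k$. The statement is then $\|L(\varphi_k,\psi_k)\|\ge\varrho\|(\varphi_k,\psi_k)\|$ for all $(\varphi_k,\psi_k)\in E_k$, uniformly in $k$ and $r\in D_k$, with $\epsilon$ fixed small. The plan is the standard blow-up contradiction argument of Wei--Yan type, applied separately in the two regions where the approximate solution concentrates: near the origin, where $(u_\epsilon,v_\epsilon)$ lives, and near the ring of $k$ far-away spikes at $x^j$.

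\textbf{Setting up the contradiction.} Suppose there are $k_n\to\infty$, $r_n\in D_{k_n}$ and $(\varphi_n,\psi_n)\in E_{k_n}$ with $\|(\varphi_n,\psi_n)\|^2=k_n$ and $\|L(\varphi_n,\psi_n)\|=o(1)\|(\varphi_n,\psi_n)\|$. Then there are multipliers $c_n$ such that, for every $(g,h)\in H$,
$$\langle L(\varphi_n,\psi_n),(g,h)\rangle=o(\sqrt{k_n})\|(g,h)\|+c_n\sum_{j}\int W^2_{\epsilon,x^j}(Y_jg+Z_jh).$$
\emph{Estimating $c_n$.} Test with $(g,h)=\sum_j(Y_j,Z_j)$. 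This pairing contains the derivative of the approximate solution. Using that $\sum_j(U_{\epsilon,x^j},V_{\epsilon,x^j})$ solves the limit system up to the errors coming from $P-1$, $Q-1$, from the interaction between different $x^j$, and from the interaction with $(u_\epsilon,v_\epsilon)$, the left side is $o(\sqrt{k_n})$. The coefficient of $c_n$ is of order $k_n$ times a nonzero constant. Hence $c_n=o(1/\sqrt{k_n})$.

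\textbf{Limit near the ring of spikes.} By the $\mathbb Z_k$ symmetry it suffices to work near $x^1$. From $\|(\varphi_n,\psi_n)\|^2=k_n$ we get $\int_{\Omega_1}(\epsilon^2|\nabla\varphi_n|^2+\varphi_n^2+\epsilon^2|\nabla\psi_n|^2+\psi_n^2)\le C$, where $\Omega_1$ is the sector containing $x^1$. Translate to $x^1$ and pass to a weak limit $(\varphi,\psi)$. This limit solves the linearized system at $(U,V)=(\gamma_1W,\gamma_2W)$. Two facts go into this step:
\begin{itemize}
\item $P,Q\to1$ at infinity;
\item $(u_\epsilon,v_\epsilon)$ decays exponentially, so it is invisible near $x^1$ because $|x^1|\sim k\ln k$.
\end{itemize}
For $\beta$ in the range $(A_\beta)$, $(U,V)$ is nondegenerate, so the kernel is spanned by $(\partial_{y_l}U,\partial_{y_l}V)$, $l=1,2,3$. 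Evenness in $y_2,y_3$ removes $l=2,3$, and the limit of the orthogonality condition in $E_k$ removes $l=1$ (the radial direction at $x^1$). Thus $(\varphi,\psi)=0$, and $\varphi_n,\psi_n\to0$ in $L^2_{loc}$ around each $x^j$.

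\textbf{Limit near the origin.} Here $\epsilon$ is fixed. Localize with a cutoff near the origin and use that the $x^j$-bumps are exponentially small there. A subsequence of $(\varphi_n,\psi_n)$, restricted to a fixed large ball, converges to some $(\xi_1,\xi_2)\in H^1$ with $L_\epsilon(\xi_1,\xi_2)=0$; no orthogonality constraint is active near the origin. Proposition \ref{lema.1} gives $(\xi_1,\xi_2)=0$. This is the point where the non-degeneracy from Section 2 is essential, and it is also why $\epsilon$ must be fixed before letting $k\to\infty$.

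\textbf{Conclusion.} Test the equation with $(\varphi_n,\psi_n)$ itself:
$$o(k_n)=\|(\varphi_n,\psi_n)\|^2-\int\Big(3\mu_1S^2\varphi_n^2+3\mu_2T^2\psi_n^2+\beta S^2\psi_n^2+4\beta ST\varphi_n\psi_n+\beta T^2\varphi_n^2\Big).$$
Split the integral over the balls $B_{R\epsilon}(x^j)$, a large ball around the origin, and the rest.
\begin{itemize}
\item On the balls $B_{R\epsilon}(x^j)$, the local $L^2$ convergence to zero makes the contribution $o(k_n)$.
\item On the ball around the origin, the contribution is $o(1)$.
\item On the remainder, $S,T$ are small (exponential tails, uniform in $k$ since $\sum_j e^{-|x-x^j|/\epsilon}$ is bounded), so this term is at most $\frac12\|(\varphi_n,\psi_n)\|^2$ once $R$ is large.
\end{itemize}
Hence $k_n=o(k_n)$, which is a contradiction.

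\textbf{Main obstacle.} The delicate step is the uniform-in-$k$ control of the coupling terms in the remainder region. The cross term $4\beta ST\varphi_n\psi_n$ requires bounding $\sum_j U_{\epsilon,x^j}$ and $\sum_jV_{\epsilon,x^j}$ uniformly, which uses the spacing $|x^i-x^j|\gtrsim \ln k$. The same bounds show that the interaction between the origin cluster and the ring vanishes, which is needed both for the estimate of $c_n$ and for decoupling the two limits.
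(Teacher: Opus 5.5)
Your proposal follows the paper's proof. Both argue by contradiction with the normalization $\|(\varphi_n,\psi_n)\|^2=k_n$. The blow-up limit at $x^1$ is killed by the non-degeneracy of $(U,V)$, evenness in $y_2,y_3$ and the orthogonality defining $E_k$. The region near the origin is killed by Proposition \ref{lema.1}, and the contradiction comes from testing with $(\varphi_n,\psi_n)$ itself. Your explicit Lagrange multiplier $c_n$ is more careful than the paper, which writes the identity for all $(g,h)$ directly.

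However, the step near the origin is mis-normalized as written. With $\|(\varphi_n,\psi_n)\|^2=k_n$ and the $\mathbb Z_{k_n}$-symmetry, only the contribution of each sector $\Omega_j$ is $O(1)$. A ball $B_R(0)$ meets all $k_n$ sectors, so $\|(\varphi_n,\psi_n)\|_{H^1(B_R(0))}$ is only bounded by $\sqrt{k_n}$. A priori a mass of order $k_n$ could sit near the origin, e.g. $\sqrt{k_n}$ times a fixed radial bump. So you cannot extract a convergent subsequence of the restrictions themselves, and the claimed $o(1)$ contribution from the origin is unjustified.

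The fix is what the paper does: take the weak limit of $(\varphi_n,\psi_n)/\sqrt{k_n}$, which has norm $1$ in $H$. That limit solves $L_\epsilon=0$ on all of $\mathbb R^3$, because the ring escapes to infinity, so it vanishes by Proposition \ref{lema.1}. Hence $\int_{B_R(0)}(\varphi_n^2+\psi_n^2)=o(k_n)$. A contribution $o(k_n)$, not $o(1)$, is exactly what your final inequality needs, so the contradiction goes through after this correction.

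There is a similar order slip in the bound for $c_n$. Since $\|\sum_j(Y_j,Z_j)\|\sim\sqrt{k_n}$, both the error term and $\langle(\varphi_n,\psi_n),L\sum_j(Y_j,Z_j)\rangle$ are $o(k_n)$, not $o(\sqrt{k_n})$. You therefore get only $c_n=o(1)$. This still suffices: the multiplier term has size $c_n$ per spike and vanishes in the limit at $x^1$, and it drops out entirely when you test with $(\varphi_n,\psi_n)\in E_{k_n}$.
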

\begin{proof}
We argue by contradiction. Assume that there exist $k\rightarrow+\infty$, $(\varphi_k,\psi_k)\in E_k$ satisfying $\|L(\varphi_k,\psi_k)\|=o(1)\|(\varphi_k,\psi_k)\|^2$.
Without loss of generality, we may assume $\|(\varphi_k,\psi_k)\|^2=k$, we have
\begin{align*}
\langle L(\varphi_k,\psi_k),(g,h)\rangle=o(1)\|(\varphi_k,\psi_k)\|\|(g,h)\|.
\end{align*}
Then
\begin{align*}
&\int_{\R^3}(\epsilon^2\nabla\varphi_k\nabla g+P(x)\varphi_k g-3\mu_1S_{\epsilon, k}^2\varphi_k g)+\int_{\R^3}(\epsilon^2\nabla\psi_k\nabla h+Q(x)\psi_k h-3\mu_2T_{\epsilon, k}^2\psi_k h)\cr
&-\beta\int_{\R^3}(S_{\epsilon, k}^2\psi_kh+T_{\epsilon, k}^2\varphi_kg+2S_{\epsilon, k}T_{\epsilon, k}\varphi_kh+2S_{\epsilon, k}T_{\epsilon, k}\psi_kg)
=o(\sqrt{k})\|(g,h)\|.
\end{align*}
Taking $(g,h)=(\varphi_k,\psi_k)$, we have
\begin{align}\label{eq3.7}
o(k)=&\int_{\R^3}(\epsilon^2|\nabla\varphi_k|^2+P(x)\varphi_k^2-3\mu_1S_{\epsilon, k}^2\varphi_k^2)+\int_{\R^3}(\epsilon^2|\nabla\psi_k|^2+Q(x)\psi_k^2-3\mu_2T_{\epsilon, k}^2\psi_k^2)\cr
&-\beta\int_{\R^3}\big((S_{\epsilon, k}^2\psi_k^2+T_{\epsilon, k}^2\varphi_k^2+4S_{\epsilon, k}T_{\epsilon, k}\varphi_k\psi_k)\big).
\end{align}
 Set
\begin{align*}
\bar{\varphi}_k=\varphi_k(x+x^1),\,\,\,\bar{\psi}_k=\psi_k(x+x^1).
\end{align*}
For any fixed $R>0$, $B_R(x_1)\subset\Omega_1$ since $|x^j-x^1|\geq r\sin\frac{\pi}{k}\geq C\ln k$ for $j=1,\cdots,k$.
We have
\begin{align*}
\int_{B_R(0)}\big(\epsilon^2|\nabla\bar{\varphi}_k|^2+P(x)\bar{\varphi}_k^2+\epsilon^2|\nabla\bar{\psi}_k|^2+Q(x)\bar{\psi}_k^2\big)\leq1.
\end{align*}
Up to a subsequence, we may assume that there exist $(\varphi,\psi)\in H^1(\R^3)\times H^1(\R^3)$ such that
\begin{align}\label{eq1.1}
&(\bar{\varphi}_k,\bar{\psi}_k)\rightharpoonup(\varphi,\psi)\,\,\, \hbox{weakly in }\, H_{loc}^1(\R^3)\times H_{loc}^1(\R^3);\cr
&(\bar{\varphi}_k,\bar{\psi}_k)\rightarrow(\varphi,\psi)\,\,\,\hbox{strongly in}\,L_{loc}^q(\R^3)\times L_{loc}^q(\R^3),q\in[2,6).
\end{align}
Moreover, $\varphi,\psi$ are even in $x_e(e=2,3)$ and satisfy
\begin{align}\label{eq3.8}
\int_{\R^3}W_\epsilon^2(\frac{\partial U_\epsilon}{\partial x_1}\varphi+\frac{\partial V_\epsilon}{\partial x_1}\psi)=0.
\end{align}

Now, we claim that $(\varphi,\psi)$ satisfies
\begin{eqnarray}\begin{cases}\label{05}
-\epsilon^2\Delta\varphi+\varphi-3\mu_1U_\epsilon^2\varphi-\beta V_\epsilon^2\varphi-2\beta U_\epsilon V_\epsilon\psi=0,\cr
-\epsilon^2\Delta\psi+\psi-3\mu_2V_\epsilon^2\psi-\beta V_\epsilon^2\psi-2\beta U_\epsilon V_\epsilon\varphi=0.
\end{cases}
\end{eqnarray}
For any $R>0$, letting $(g,h)\in C_0^\infty(B_R(0))\times  C_0^\infty(B_R(0))$ and $(g,h)$ be even in $x_e,e=2,3$, $(g_1(x),h_1(x))=(g(x-x^1),h(x-x^1)\in C_0^\infty(B_R(x_1))\times C_0^\infty(B_R(x_1))\subset\Omega_1\times\Omega_1$, we have
\begin{align}\label{eqs2.9}
\int_{\Omega_1}(\epsilon^2\nabla\varphi_k\nabla g_1+P(x)\varphi_kg_1-3\mu_1S_{\epsilon, k}^2\varphi_kg_1)\rightarrow\int_{\R^3}(\epsilon^2\nabla\varphi\nabla g+\varphi g-3\mu_1U_\epsilon^2\varphi g),
\end{align}
\begin{align}\label{eqs2.10}
\int_{\Omega_1}(\epsilon^2\nabla\psi_k\nabla h_1+Q(x)\psi_kh_1-3\mu_2T_{\epsilon, k}^2\psi_kh_1)\rightarrow\int_{\R^3}(\epsilon^2\nabla\psi\nabla h+\psi h-3\mu_2V_\epsilon^2\psi h),
\end{align}
and
\begin{align}\label{eqs2.11}
&\int_{\Omega_1}(S_{\epsilon, k}^2\psi_kh_1+T_{\epsilon, k}^2\varphi_kg_1+2S_{\epsilon, k}T_{\epsilon, k}\varphi_kh_1+2S_{\epsilon, k}T_{\epsilon, k}\psi_kg_1)\cr
&\rightarrow\int_{\R^3}(U_\epsilon^2\psi h+V_\epsilon^2\varphi g+2U_\epsilon V_\epsilon\varphi h+2U_\epsilon V_\epsilon\psi g).
\end{align}
Insertting \eqref{eqs2.9}-\eqref{eqs2.11} into \eqref{eq3.7}, we see
\begin{align}\label{eqs2.12}
&\int_{\R^3}\big(\epsilon^2\nabla\varphi\nabla g+\varphi g-3\mu_1U_\epsilon^2\varphi g\big)+\int_{\R^3}\big(\epsilon^2\nabla\psi\nabla h+\psi h-3\mu_2V_\epsilon^2\psi h\big)\cr
&-\beta\int_{\R^3}\big(U_\epsilon^2\psi h+V_\epsilon^2\varphi g+2U_\epsilon V_\epsilon\varphi h+2U_\epsilon V_\epsilon\psi g\big)=0.
\end{align}
By the density of $C_0^\infty(B_R(0))\times C_0^\infty(B_R(0))$ in $H^1(\R^3)\times H^1(\R^3)$,  we have proved \eqref{05}.

Since we work in the space of functions which are even in $x_2,x_3$, the kernel of $(U_\epsilon,V_\epsilon)$ is given by one dimensional $(\frac{\partial W_\epsilon}{\partial x_1},\frac{\partial W_\epsilon}{\partial x_1})$. Thus, we see $(\varphi,\psi)=c(\frac{\partial U_\epsilon}{\partial x_1},\frac{\partial V_\epsilon}{\partial x_1})$ for some constant $c$, which from \eqref{eq3.8} implies that $(\varphi,\psi)=(0,0)$.

As a result, there holds
\begin{align*}
\int_{B_R(x^1)}\varphi_k^2+\psi_k^2=o(1),\forall\,\,R>0.
\end{align*}
On the other hand, from $\|(\varphi_k,\psi_k)\|^2=k$, we know $\|\frac{1}{\sqrt{k}}(\varphi_k,\psi_k)\|=1$, so there exists $(\varphi^*,\psi^*)\in H^1(\R^3)\times H^1(\R^3)$ such that
\begin{align}\label{eq3}
&\frac{1}{\sqrt{k}}\varphi_k\rightharpoonup\varphi^*,\,\,\, \frac{1}{\sqrt{k}}\psi_k\rightharpoonup\psi^* \,\,\hbox{weakly in}\,\,H^1(\R^3);\cr
&\frac{1}{\sqrt{k}}\varphi_k\rightarrow\varphi^*,\,\,\,\,\frac{1}{\sqrt{k}}\psi_k\rightarrow\psi^*\,\,\hbox{strongly in}\,\,\,L_{loc}^q(\R^3),q\in [2,6).
\end{align}
Taking $(f,g)\in C_0^\infty(B_R(0))\times C_0^\infty(B_R(0))$ with $\|(f,g)\|=1$ and letting $k\rightarrow+\infty$, it holds
\begin{align*}
\int_{\R^3}\epsilon^2\nabla(\frac{1}{\sqrt{k}}\varphi_k)\nabla f+&P(x)\frac{1}{\sqrt{k}}\varphi_kf-3\mu_1S_{\epsilon, k}^2\frac{1}{\sqrt{k}}\varphi_kf\cr
&\rightarrow\int_{\R^3}\epsilon^2\nabla \varphi^*\nabla f+P(x)\varphi^*f-3\mu_1u^2_{\epsilon}\varphi^*f,
\end{align*}
\begin{align*}
\int_{\R^3}\epsilon^2\nabla(\frac{1}{\sqrt{k}}\psi_k)\nabla g+&Q(x)\frac{1}{\sqrt{k}}\psi_kg-3\mu_2T_{\epsilon, k}^2\frac{1}{\sqrt{k}}\psi_kg\cr
&\rightarrow\int_{\R^3}\epsilon^2\nabla \psi^*\nabla g+Q(x)\psi^*g-3\mu_2v^2_{\epsilon}\psi^*g,
\end{align*}
and
\begin{align*}
&\int_{\R^3}S_{\epsilon, k}^2(\frac{1}{\sqrt{k}}\psi_k)g+T_{\epsilon, k}^2(\frac{1}{\sqrt{k}}\varphi_k)f+2S_{\epsilon, k}T_{\epsilon, k}(\frac{1}{\sqrt{k}}\varphi_k)g+2S_{\epsilon, k}T_{\epsilon, k}(\frac{1}{\sqrt{k}}\psi_k)f\cr
&\rightarrow\int_{\R^3}u_\epsilon^2\psi^*g+v_\varepsilon^2\varphi^*f+2u_\epsilon v_\epsilon\varphi^*g+2u_\epsilon v_\epsilon\psi^*f.
\end{align*}
Then $(\varphi^*,\psi^*)$ satisfies
\begin{eqnarray}\begin{cases}\label{050}
-\epsilon^2\Delta\varphi^*+P(x)\varphi^*=3\mu_1u_\epsilon^2\varphi^*+\beta(2u_\epsilon v_\epsilon\psi^*+v_\epsilon^2\varphi^*),\,\,\,x\in \R^{3},\cr
-\epsilon^2\Delta\psi^*+Q(x)\psi^*=3\mu_2v_\epsilon^2\psi^*+\beta(2u_\epsilon v_\epsilon\varphi^*+u_\epsilon^2\psi^*),\,\,\,\,\,x\in \R^{3}.
\end{cases}
\end{eqnarray}

It follows from Proposition \ref{lema.1} that $(u_\epsilon,v_\epsilon)$ is non-degenerate. Then $(\varphi^*,\psi^*)=0$ and we get
for $2\leq q<6$,
\begin{align*}
\Big(\int_{B_R(0)}\varphi_k^qdx\Big)^{\frac{1}{q}}=o(\sqrt{k}),\,\,\,\,\,\,\,\,\Big(\int_{B_R(0)}\psi_k^qdx\Big)^{\frac{1}{q}}=o(\sqrt{k}).
\end{align*}
Using Lemma \ref{lemb.1}, we have
\begin{align*}
o(k)=&\int_{\R^3}\epsilon^2|\nabla\varphi_k|^2+P(x)\varphi_k^2-3\mu_1S_{\epsilon, k}^2\varphi_k^2+\int_{\R^3}\epsilon^2|\nabla\psi_k|^2+Q(x)\psi_k^2-3\mu_2T_{\epsilon, k}^2\psi_k^2\cr
&-\beta\int_{\R^3}(S_{\epsilon, k}^2\psi_k^2+T_{\epsilon, k}^2\varphi_k^2+4S_{\epsilon, k}T_{\epsilon,k}\varphi_k\psi_k)\cr
=&k-3\mu_1\int_{\R^3}(u_{\epsilon}^2+U_{\epsilon, r}^2)\varphi_k^2-3\mu_2C\int_{\R^3}(v_{\epsilon}^2+U_{\epsilon,r}^2)\psi_k^2
-\beta\int_{\R^3}\big((\bar{S}_{\epsilon, k}^2\psi_k^2+\bar{T}_{\epsilon, k}^2\varphi_k^2)\big)+o(k)\cr
\geq&k-(o(1)+O(e^{-\frac{R}{\epsilon}}))k,
\end{align*}
which  is impossible for large $R$ and $k$. Consequently, we complete the proof.
\end{proof}

\begin{lem}\label{lm2.4}
There exists a constant $C>0$, independent of $k$ such that
$$
\|R^{(i)}(\varphi,\psi)\|\leq C\|(\varphi,\psi)\|^{3-i},\ i=0,1,2.
$$
\end{lem}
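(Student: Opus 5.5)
The plan is to bound $R$ and its first two Fréchet derivatives term by term, using only Hölder and Sobolev inequalities together with a uniform $L^\infty$ bound on the approximate solution.

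\textbf{Step 1: uniform bounds on the approximate solution.} By \eqref{wqf2.5}, $R$ is a sum of two kinds of terms. The first kind is cubic: $\int S_{\epsilon,k}\varphi^3$, $\int T_{\epsilon,k}\psi^3$, $\int S_{\epsilon,k}\varphi\psi^2$, $\int T_{\epsilon,k}\varphi^2\psi$. The second kind is quartic: $\int\varphi^4$, $\int\psi^4$, $\int\varphi^2\psi^2$. I first record that $\|S_{\epsilon,k}\|_{L^\infty}+\|T_{\epsilon,k}\|_{L^\infty}\le C$ with $C$ independent of $k$. This holds because $u_\epsilon,v_\epsilon$ are fixed bounded functions once $\epsilon$ is fixed. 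Moreover, since $|x^i-x^j|\ge C\ln k$ for $i\ne j$ and $U,V$ decay exponentially, for every $x$
\[
\sum_{j=1}^k U\Big(\frac{x-x^j}{\epsilon}\Big)\le C\sum_{j} e^{-|x-x^j|/\epsilon}\le C,
\]
uniformly in $k$. The same estimate holds for $V$.

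\textbf{Step 2: Sobolev embedding.} Fix $\epsilon$. Since $P,Q\ge\alpha$ by $(A_1)$, the norms $\|\cdot\|_{\epsilon,P}$ and $\|\cdot\|_{\epsilon,Q}$ are equivalent to the $H^1(\R^3)$ norm, with constants depending only on $\epsilon,\alpha$ and not on $k$. Hence $\|\varphi\|_{L^q}\le C\|\varphi\|_{\epsilon,P}$ for $q\in[2,6]$, and similarly for $\psi$.

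\textbf{Step 3: term-by-term estimates.} For $i=0$, Hölder gives
\[
\Big|\int S_{\epsilon,k}\varphi^3\Big|\le C\|\varphi\|_{L^3}^3\le C\|(\varphi,\psi)\|^3,
\qquad
\Big|\int S_{\epsilon,k}\varphi\psi^2\Big|\le C\|\varphi\|_{L^3}\|\psi\|_{L^3}^2 .
\]
For the quartic terms, $\int\varphi^4\le C\|\varphi\|^4\le C\|(\varphi,\psi)\|^3$, using that we only need $\|(\varphi,\psi)\|\le 1$, the relevant regime for the reduction.

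For $i=1$, I compute $\langle R'(\varphi,\psi),(g,h)\rangle$. The cubic terms produce expressions like $3\int S_{\epsilon,k}\varphi^2 g$, which is bounded by $C\|\varphi\|_{L^3}^2\|g\|_{L^3}$. The quartic terms produce expressions like $\int\varphi^3 g$, bounded by $\|\varphi\|_{L^4}^3\|g\|_{L^4}$. In all cases the bound is $C\|(\varphi,\psi)\|^2\|(g,h)\|$.

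For $i=2$, the bilinear form $R''(\varphi,\psi)[(g_1,h_1),(g_2,h_2)]$ consists of terms like $\int S_{\epsilon,k}\varphi g_1g_2$ and $\int\varphi^2 g_1g_2$. These are bounded in the same way by $C\|(\varphi,\psi)\|\,\|(g_1,h_1)\|\,\|(g_2,h_2)\|$.

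\textbf{Main obstacle.} The only real point is that all constants must be independent of $k$ even though the number of bumps grows with $k$. This is handled by the uniform $L^\infty$ bound of Step 1, which relies on the separation $|x^i-x^j|\gtrsim \ln k$. The quartic terms carry a fourth power, so the estimate is stated, as is standard, for $\|(\varphi,\psi)\|\le 1$.
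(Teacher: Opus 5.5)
Your proposal is correct and takes the same route as the paper. The paper's proof consists only of ``direct computations'' with a pointer to Peng and Wang's earlier work, and your Hölder--Sobolev estimates, together with a $k$-uniform $L^\infty$ bound on $S_{\epsilon,k}$ and $T_{\epsilon,k}$, are exactly those computations. You were also right to note that the quartic terms force the restriction $\|(\varphi,\psi)\|\le 1$; this is harmless, because the lemma is only applied on the small set $M$ in the contraction argument.
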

\begin{proof}
Since these estimates can be obtained by direct computations, we can refer to \cite{PW13}.
\end{proof}

\begin{lem}\label{lm2.5}
There exists a constant $C>0$, independent of $k$, such that
$$
\|\ell\|\leq C\Bigl(\frac{k}{r^{m_1}}+\frac{k}{r^{m_2}}+\frac{k}{r}e^{-\frac{r}{\epsilon}}+\frac{k}{r}\sum\limits_{i\neq j}e^{-\frac{|x^i-x^j|}{\epsilon}}\Bigr).
$$
\end{lem}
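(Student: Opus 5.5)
Since $\ell$ is a bounded linear functional on $E_k$, I will estimate $|\ell(\varphi,\psi)|$ for arbitrary $(\varphi,\psi)\in E_k$ and show it is at most the stated quantity times $\|(\varphi,\psi)\|$.

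The first step is to split $\ell$ into three groups of terms, using the definition \eqref{eqs2.4} and the fact that $(u_\epsilon,v_\epsilon)$ solves \eqref{eq0} while each $(U_{\epsilon,x^j},V_{\epsilon,x^j})$ solves the scaled limit system \eqref{eqQ1}:
\begin{itemize}
\item[(a)] the potential terms $\int\sum_j (P-1)U_{\epsilon,x^j}\varphi$ and $\int\sum_j(Q-1)V_{\epsilon,x^j}\psi$;
\item[(b)] the pure interaction terms among the far spikes, namely $S^3-u_\epsilon^3-\sum U_j^3$ and its analogues, with $u_\epsilon$ dropped;
\item[(c)] the mixed terms between $u_\epsilon,v_\epsilon$ (concentrated near the origin) and the far spikes.
\end{itemize}
Throughout I will use the $k$-fold rotational symmetry of $H_{\epsilon,P}$ and $H_{\epsilon,Q}$ to reduce each integral to $k$ times an integral over the sector $\Omega_1$ containing $x^1$. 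Then Cauchy--Schwarz gives a factor $\sqrt{k}\,\|(\varphi,\psi)\|_{H^1(\Omega_1)}\le C\|(\varphi,\psi)\|$, up to the normalization.

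For (a) I will use $(H_2)$. On the set $|x-x^1|\le r/2$ we have $|P(x)-1|\le C|a_1|r^{-m_1}$, so the integral there is $O(r^{-m_1})\|\varphi\|_{L^2(\Omega_1)}$. On the complement, $U_{\epsilon,x^1}\le Ce^{-r/(2\epsilon)}$, which is absorbed into the term $\frac{k}{r}e^{-r/\epsilon}$ after adjusting constants, or simply into $r^{-m}$. Summing over the $k$ sectors gives $k r^{-m_1}+k r^{-m_2}$.

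For (b) I will use the elementary inequality
\[
\Big|\big(\textstyle\sum_j a_j\big)^3-\sum_j a_j^3\Big|\le C\sum_{i\ne j}a_i^2a_j,
\]
together with the analogous bounds for the $\beta$-coupling terms. In $\Omega_1$ the spike $U_{\epsilon,x^1}$ dominates, and $U_{\epsilon,x^1}^2U_{\epsilon,x^j}\le Ce^{-|x^1-x^j|/\epsilon}e^{-\tau|x-x^1|/\epsilon}$. Its $L^{4/3}$ norm is therefore bounded by $\sum_{j\ne1}e^{-|x^1-x^j|/\epsilon}$, and applying Sobolev in $L^4$ yields the $\sum_{i\ne j}e^{-|x^i-x^j|/\epsilon}$ contribution.

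For (c), $u_\epsilon$ and $v_\epsilon$ decay like $e^{-c|x|/\epsilon}$ away from $B_\delta(0)$, by Theorem A and standard comparison, while the far spikes sit at distance $r$ from the origin. Hence each cross product such as $u_\epsilon^2U_{\epsilon,x^j}$ or $u_\epsilon U_{\epsilon,x^j}^2$ is $O(e^{-r/\epsilon})$ in $L^{4/3}$. Summing over $j$ gives $k e^{-r/\epsilon}$, and the extra factor $1/r$ should come from the polynomial prefactor in the decay of $W$, namely $W(s)\sim s^{-1}e^{-s}$ in $\R^3$.

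I expect the main obstacle to be the bookkeeping in (c). The difficulty is checking that the error $\omega_\epsilon,\varpi_\epsilon$ in $u_\epsilon$ decays fast enough, since it is only controlled in $H^1$. I will handle this by proving a pointwise exponential bound for $u_\epsilon$ and $v_\epsilon$ outside $B_{2\delta}(0)$ via a maximum-principle comparison with $e^{-\alpha|x|/(2\epsilon)}$, using $(A_1)$. This comparison is valid because the nonlinearity is small where $u_\epsilon$ and $v_\epsilon$ are small.
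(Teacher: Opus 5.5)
Your proposal follows essentially the paper's proof. Symmetry reduces $\ell$ to $k$ copies of the contribution of the spike at $x^1$, and $\ell$ is split into the potential terms, the interactions among the far spikes, and the cross terms between $(u_\epsilon,v_\epsilon)$ and the far spikes; these are the paper's $K_1$, $K_{23}$, and $K_{21},K_{22},K_3,K_4$. Each piece is then bounded by H\"older and Sobolev, and the pointwise decay of $u_\epsilon,v_\epsilon$ you plan to prove by comparison is exactly Lemma~\ref{lemb.1}.

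Two small repairs are needed, neither affecting the conclusion.

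First, in (c): a comparison function like $e^{-\alpha|x|/(2\epsilon)}$ only gives the cross terms as $O(ke^{-cr/\epsilon})$ with some $c\in(0,1)$, not $\frac{k}{r}e^{-r/\epsilon}$. For fixed $\epsilon$ this is still absorbed into $Ck\,r^{-m_1}$, just as in your part (a), so the obstacle you anticipate in (c) is harmless.

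Second, in (b): the cubic inequality does not hold with a $k$-independent constant (take all $a_j$ equal). On $\Omega_1$, expand $(U_{\epsilon,x^1}+b)^3-U_{\epsilon,x^1}^3$ with $b=\sum_{j\neq1}U_{\epsilon,x^j}$, use $b\le CU_{\epsilon,x^1}$ on $B_{|x^1-x^2|/2}(x^1)$, and treat the rest of $\Omega_1$ as an $O(e^{-(\frac32-\sigma)|x^1-x^2|/\epsilon})$ error.
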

\begin{proof}
From \eqref{eqs2.4}, we have
\begin{eqnarray}\label{eqs2.15}
\ell(\varphi_{k},\psi_{k})
&=&
\int_{\R^3}\Big(\sum_{j=1}^{k}(P(x)-1)U_{\epsilon,x^{j}}\varphi_{k}+\sum_{j=1}^{k}(Q(x)-1)V_{\epsilon,x^{j}}\psi_{k}\Big)
\cr
&&-\mu_1\int_{\R^3}\Bigl(S_{\epsilon,k}^3-u^{3}_{\epsilon}-\sum\limits_{j=1}^kU_{\epsilon,x^j}^3\Bigr)\varphi_{k} -\mu_2\int_{\R^3}\Bigl(T_{\epsilon,k}^3-v^{3}_{\epsilon}-\sum\limits_{j=1}^kV_{\epsilon,x^j}^3\Bigr)\psi_{k}
\cr
&&
-\beta\int_{\R^3}\Bigl(S_{\epsilon,k}^2T_{\epsilon,k}-u_{\epsilon}^2v_{\epsilon}-\sum\limits_{j=1}^kU_{\epsilon, x^j}^2V_{\epsilon, x^j}\Bigr)\psi_{k} \cr
&&-\beta\int_{\R^3}\Bigl(T_{\epsilon,k}^2S_{\epsilon,k}-v_{\epsilon}^2u_{\epsilon}-\sum\limits_{j=1}^kU_{\epsilon, x^j}V_{\epsilon, x^j}^2\Bigr)\varphi_{k}\cr
&=:&K_1+K_2+K_3+K_4.
\end{eqnarray}
By the symmetry, we have
\begin{align}\label{2.1}
&\int_{\R^3}\Big(\sum_{j=1}^{k}(P(x)-1)U_{\epsilon,x^{j}}\varphi_{k}=k\int_{\R^3}(P(x)-1)U_{\epsilon,x^{1}}\varphi_{k}\cr
=&k\int_{\R^3}((P(x-x^1)-1))U_\epsilon\varphi_{k}(x-x^1)
\leq C\frac{k}{r^{m_1}}\|(\varphi_k, \psi_{k})\|+O(e^{-\frac{k\pi}{\epsilon}}).
\end{align}
Similarly, we have
\begin{align*}
\int_{\R^3}\Big(\sum_{j=1}^{k}(Q(x)-1)V_{\epsilon,x^{j}}\psi_{k}\leq C\frac{k}{r^{m_2}}\|(\varphi_k, \psi_{k})\|+O(e^{-\frac{k\pi}{\epsilon}}).
\end{align*}
Then
\begin{align}\label{eqs2.10}
K_1=\int_{\R^3}\Big(\sum_{j=1}^{k}(P(x)-1)U_{\epsilon,x^{j}}\varphi_{k}+\sum_{j=1}^{k}(Q(x)-1)V_{\epsilon,x^{j}}\psi_{k}\Big)\leq C(\frac{k}{r^{m_1}}+\frac{k}{r^{m_2}})\|(\varphi_k,\psi_k)\|.
\end{align}
On the other hand, we obtain
\begin{align}\label{2.3}
&\int_{\R^3}\Bigl(S_{\epsilon,k}^3-u^{3}_{\epsilon}-\sum\limits_{j=1}^kU_{\epsilon,x^j}^3\Bigr)\varphi_{k}
=\int_{\R^3}\Big((u_{\epsilon}+\sum\limits_{j=1}^kU_{\epsilon,x^j})^3-\sum\limits_{j=1}^kU_{\epsilon, x^j}^3-u_{\epsilon}^3\Big)\varphi_k\cr
=&3\int_{\R^3}\Big(u_{\epsilon}^2\sum\limits_{j=1}^kU_{\epsilon,x^j}+\big(\sum\limits_{j=1}^kU_{\epsilon,x^j}\big)^2u_\epsilon+\sum\limits_{i\neq j}U^2_{\epsilon, x^i}U_{\epsilon, x^j}\Big)\varphi_k\\
=:&3(K_{21}+K_{22}+K_{23}).
\end{align}
For $\tau$ being an integer, we write
\begin{align}\label{2.4}
&\int_{\R^3}u_{\epsilon}^2\sum\limits_{j=1}^kU_{\epsilon, x^j}\varphi_k=k\int_{\R^3}u_{\epsilon}^2U_{\epsilon, x^1}\varphi_k\cr
=&k\int_{B_\tau(x^1)}u_\epsilon^2U_{\epsilon, x^1}\varphi_k+k\int_{\R^3\setminus B_\tau(x^1)}u_{\epsilon}^2U_{\epsilon, x^1}\varphi_k=:k(K_{211}+K_{212}),
\end{align}

\begin{align}\label{2.5}
K_{212}=\int_{\R^3\setminus B_{\tau}(x^1)}u_\epsilon^2U_{\epsilon, x^1}\varphi_k \leq& \int_{B_R(0)}u_{\epsilon}U_{\epsilon, x^1}\varphi_k+\int_{\R^3\setminus(B_\tau(x^1)\bigcup B_R(0))}u_{\epsilon}^2U_{\epsilon, x^1}\varphi_k\cr
\leq& (\int_{B_R(0)}u_{\epsilon}^6)^{\frac{1}{3}}(\int_{\R^3\setminus B_{\tau}(x^1)}U_{\epsilon, x^1}^3)^{\frac{1}{3}}(\int_{\R^3\setminus B_{\tau}(x^1)}\varphi_k^3)^{\frac{1}{3}}\cr
\leq&C\frac{1}{r}e^{-\frac{r}{\epsilon}}\|(\varphi_k,\psi_k)\|,
\end{align}
and
\begin{align}\label{2.6}
K_{211}=&\int_{B_\tau(x^1)}u_{\epsilon}^2U_{\epsilon, x^1}\varphi_k \leq\int_{\R^3\setminus B_\tau(0)}u_{\epsilon, k}^2U_{\epsilon, x^1}\varphi_k\cr
\leq&(\int_{\R^3\setminus B_\tau(0)}u_\epsilon^6)^{\frac{1}{3}}(\int_{B_{\tau}(x^1)}U_{\epsilon, x^1}^3)^{\frac{1}{3}}(\int_{B_{\tau}(x^1)}\varphi_k^3)^{\frac{1}{3}}\cr
\leq&C\frac{1}{\sqrt{k}r}e^{-\frac{r}{\epsilon}}\|(\varphi_k,\psi_k)\|.
\end{align}
Inserting \eqref{2.5} and \eqref{2.6} into \eqref{2.4}, we obtain
\begin{align}\label{2.70}
K_{21}=\int_{\R^3}u_{\epsilon}^2\sum\limits_{j=1}^kU_{\epsilon, x^j}\varphi_k\leq (\frac{k}{r}e^{-\frac{r}{\epsilon}}+\frac{\sqrt{k}}{r}e^{-\frac{r}{\epsilon}})\|(\varphi_k,\psi_k)\|.
\end{align}
Similarly, we obtain
\begin{align}\label{2.700}
K_{22}=\int_{\R^3}\sum\limits_{j=1}^kU_{\epsilon, x^j}^2u_\epsilon\varphi_k\leq (\frac{k}{r}e^{-\frac{r}{\epsilon}}+\frac{\sqrt{k}}{r}e^{-\frac{r}{\epsilon}})\|(\varphi_k,\psi_k)\|,
\end{align}
and
\begin{align}\label{2.71}
K_{23}=\int_{\R^3}\sum\limits_{i\neq j}U_{\epsilon, x^i}U_{\epsilon, x^j}\varphi_k\leq C\frac{k}{r}\sum\limits_{i\neq j}e^{-\frac{|x^i-x^j|}{\epsilon}}\|(\varphi_k,\psi_k)\|.
\end{align}
Combining \eqref{2.70} and \eqref{2.71}, we obtain
\begin{align*}
K_2=&-\mu_1\int_{\R^3}\Bigl(S_{\epsilon,k}^3-u^{3}_{\epsilon}-\sum\limits_{j=1}^kU_{\epsilon,x^j}^3\Bigr)\varphi_{k} -\mu_2\int_{\R^3}\Bigl(T_{\epsilon,k}^3-v^{3}_{\epsilon}-\sum\limits_{j=1}^kV_{\epsilon,y^j}^3\Bigr)\psi_{k}\cr
\leq& C(\frac{k}{r}e^{-\frac{r}{\epsilon}}+C\frac{\sqrt{k}}{r}e^{-\frac{r}{\epsilon}}+\frac{k}{r}\sum\limits_{i\neq j}^ke^{-\frac{|x^i-x^j|}{\epsilon}})\|(\varphi_k,\psi_k)\|.
\end{align*}
Also, by direct computations, we have
\begin{align}\label{eqs2.51}
K_3=\int_{\R^3}\Big(S_{\epsilon, k}^2T_{\epsilon, k}-u^2_\epsilon v_\epsilon-\sum\limits_{i\neq j}U_{\epsilon, x^i}V_{\epsilon, x^j}\Big)=O\Big(\int_{\R^3}u_\epsilon v_\epsilon U_{\epsilon, r}+u_\epsilon U_{\epsilon, r}V_{\epsilon, r}\Big)=O\big(\frac{k}{r}e^{-\frac{r}{\epsilon}}\big),
\end{align}
and
\begin{align}\label{eqs2.52}
K_4=\int_{\R^3}\Big(S_{\epsilon, k}T_{\epsilon, k}^2-u_\epsilon v_\epsilon^2-\sum\limits_{i\neq j}U_{\epsilon, x^i}V_{\epsilon, x^j}^2\Big)=O\Big(\int_{\R^3}u_\epsilon v_\epsilon V_{\epsilon r}+v_\epsilon U_{\epsilon, r}V_{\epsilon, r}\Big)=O\big(\frac{k}{r}e^{-\frac{r}{\epsilon}}\big).
\end{align}
From \eqref{eqs2.15} to \eqref{eqs2.52}, we complete the results.
\end{proof}

\begin{prop}\label{pro2.6}
There is an integer $k_0>0$ such that for each $k\geq k_0$,
there is a $C^1$ map from $D_k$ to $E:(\varphi,\psi)=(\varphi(r),\psi(r))$, $r=|x^1|$ satisfying $(\varphi,\psi)\in E_k$, and
 $$
 \frac{\partial J(\varphi,\psi)}{\partial(\varphi,\psi)}\Big|_{E_k}=0.
 $$
Moreover, there is a constant $C$, such that
$$
\|(\varphi,\psi)\|\leq C\Bigl(\frac{k}{r^{m_1}}+\frac{k}{r^{m_2}}+\frac{k}{r}e^{-\frac{r}{\epsilon}}+\frac{k}{r}\sum\limits_{i\neq j}e^{-\frac{|x^i-x^j|}{\epsilon}}\Bigr).
$$
\end{prop}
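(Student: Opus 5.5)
We reduce the constrained critical point problem on $E_k$ to a fixed point problem via the Riesz representation theorem and the contraction mapping principle. This is the standard Lyapunov--Schmidt argument, and the expansion \eqref{eqs03.3} together with Lemmas \ref{lm2.1}--\ref{lm2.5} supplies all the needed ingredients.

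First, by Lemma \ref{lm2.5} the linear functional $\ell$ is bounded on $E_k$. Hence there is $l_k\in E_k$ with $\ell(\varphi,\psi)=\langle l_k,(\varphi,\psi)\rangle$ and
\[
\|l_k\|\le C\Bigl(\frac{k}{r^{m_1}}+\frac{k}{r^{m_2}}+\frac{k}{r}e^{-\frac{r}{\epsilon}}+\frac{k}{r}\sum_{i\neq j}e^{-\frac{|x^i-x^j|}{\epsilon}}\Bigr)=:C\kappa_k.
\]
Similarly, we regard $L$ (composed with the orthogonal projection onto $E_k$) as a bounded self-adjoint operator on $E_k$. By Lemmas \ref{lm2.1} and \ref{lm2.2}, with $\varrho$ uniform in $r\in D_k$ and $k\ge k_0$, this operator is invertible with $\|L^{-1}\|\le \varrho^{-1}$. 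The equation $\partial J/\partial(\varphi,\psi)|_{E_k}=0$ is then equivalent to
\[
(\varphi,\psi)=\mathcal A(\varphi,\psi):=-L^{-1}\bigl(l_k-R'(\varphi,\psi)\bigr),
\]
where $R'$ denotes the $E_k$-projected gradient of $R$.

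Next we set $\mathcal S=\{(\varphi,\psi)\in E_k:\|(\varphi,\psi)\|\le 2C\varrho^{-1}\kappa_k\}$ and check that $\mathcal A$ maps $\mathcal S$ into itself and is a contraction. By Lemma \ref{lm2.4} with $i=1,2$, we have $\|R'(\varphi,\psi)\|\le C\|(\varphi,\psi)\|^2$ and $\|R'(\varphi_1,\psi_1)-R'(\varphi_2,\psi_2)\|\le C(\|(\varphi_1,\psi_1)\|+\|(\varphi_2,\psi_2)\|)\|(\varphi_1-\varphi_2,\psi_1-\psi_2)\|$. So it suffices that $\kappa_k\to0$ as $k\to\infty$, uniformly for $r\in D_k$. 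This is where the choice of $D_k$ enters, and it is the step I expect to need the most care. Since $r\sim \frac{m}{2\pi}k\ln k$, we get $k/r^{m}\sim k^{1-m}(\ln k)^{-m}\to0$ because $m>2$. The term $\frac{k}{r}e^{-r/\epsilon}$ is superexponentially small. For the interaction term, $|x^i-x^j|\ge 2r\sin\frac{\pi}{k}\approx \frac{2\pi r}{k}\sim m\ln k$. Summing over $j$ with $i$ fixed and using the symmetry gives the bound $\frac{k}{r}\cdot k\,e^{-2\pi r/(\epsilon k)}$, up to a geometric-series constant. With $\epsilon$ fixed small, this is $O(k^{2-m(1-\delta')/\epsilon}/r)$, which tends to zero. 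Here one must check that the effective exponent, which is at least of size $m\ln k$ in the exponent, beats the polynomial prefactor $k^2/r$; this holds since $\epsilon<1$ and $m>2$. (If the sum is meant over all pairs, an extra factor $k$ appears, and the same argument still gives decay provided $m/\epsilon>3$, which is true for $\epsilon$ small.) Thus, for $k\ge k_0$, we have $C\|(\varphi,\psi)\|\le\frac12$ on $\mathcal S$, so $\mathcal A$ is a $\frac12$-contraction of $\mathcal S$ into itself. Banach's fixed point theorem then yields a unique $(\varphi(r),\psi(r))\in\mathcal S$, which gives the stated estimate.

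Finally, for the $C^1$ dependence on $r$, we apply the implicit function theorem to the map $F(r,(\varphi,\psi))=(\varphi,\psi)-\mathcal A_r(\varphi,\psi)$. Its partial derivative in $(\varphi,\psi)$ at the fixed point is $I+O(\|(\varphi,\psi)\|)$, which is invertible. The map is $C^1$ in $r$ because $x^j$, $S_{\epsilon,k}$, $T_{\epsilon,k}$ and the constraint functions $W^2_{\epsilon,x^j}Y_j,\,W^2_{\epsilon,x^j}Z_j$ depend smoothly on $r$. To make the spaces fixed, one transports $E_k$ by the orthogonal projection, which varies smoothly in $r$. By the uniqueness in $\mathcal S$, the implicit function branch coincides with the fixed point, so $r\mapsto(\varphi(r),\psi(r))$ is $C^1$ on $D_k$.
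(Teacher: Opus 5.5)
Your proposal is correct and follows the same route as the paper: you represent $\ell$ by its Riesz representative in $E_k$, invert $L$ on $E_k$ via Lemmas \ref{lm2.1} and \ref{lm2.2}, and apply the contraction mapping principle on a ball whose radius is the bound of Lemma \ref{lm2.5}. You go further than the paper in two places, both of which it leaves implicit: you check that this bound tends to zero uniformly for $r\in D_k$, and you justify the $C^1$ dependence on $r$ through the implicit function theorem.
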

\begin{proof}
We know that $\ell(\varphi,\psi)$ is a bounded linear functional in $E_k$, which implies that  there is an $\ell_\epsilon\in E_k$, such that
$$
\ell(\varphi,\psi) =\langle \ell_\epsilon,(\varphi,\psi)\rangle.
$$
Thus finding a critical point for $J(\varphi,\psi)$ is equivalent to solving
$$
\ell_\epsilon+L(\varphi,\psi)+R^{'}(\varphi,\psi)=0,
$$
which is also equivalent to
$$
(\varphi,\psi)=A(\varphi,\psi)=:-L^{-1}\ell_\epsilon-L^{-1}R^{'}(\varphi,\psi),
$$
since $L$ is invertible in $E_k$.

Set
\[
M=\Bigl\{ (\varphi,\psi)\in E:\,\,\|(\varphi,\psi)\|\leq
C\Bigl(\frac{k}{r^{m_1}}+\frac{k}{r^{m_2}}+\frac{k}{r}e^{-\frac{r}{\epsilon}}+\frac{k}{r}\sum\limits_{i\neq j}e^{-\frac{|x^i-x^j|}{\epsilon}}\Bigr)\Bigr\}.
\]

 Following Lemmas \ref{lm2.2}-\ref{lm2.5}, we can prove that  $A$ is a contraction map on $M$. Then by the contraction mapping theorem, we conclude that there exists $(\varphi,\psi)\in E_k$, such that $A(\varphi,\psi)=(\varphi,\psi)$. Moreover, there holds
$$
\|(\varphi,\psi)\|\leq  C\Bigl(\frac{k}{r^{m_1}}+\frac{k}{r^{m_2}}+\frac{k}{r}e^{-\frac{r}{\epsilon}}+\frac{k}{r}\sum\limits_{i\neq j}e^{-\frac{|x^i-x^j|}{\epsilon}}\Bigr).
$$
\end{proof}

Now we are ready to prove Theorem \ref{th3}.
\begin{proof}[\textbf{Proof of Theorem \ref{th3}}]
 We prove the theorem only for the case $m_1=m_2$, since the other case is similar.
If $m_1=m_2$, using from Lemmas~\ref{lm2.2}-\ref{lm2.5}, Propositions~\ref{pro2.6} and \ref{propB.1} imply that
\begin{eqnarray*}
F(r)
&=&
I(S_{\epsilon,k},T_{\epsilon,k})+\ell(\varphi_{k},\psi_{k})+\frac{1}{2}\langle L(\varphi_{k},\psi_{k}),(\varphi_{k},\psi_{k})\rangle-R(\varphi_{k},\psi_{k})\nonumber\\
&=&
I(S_{\epsilon,k},T_{\epsilon,k})+O\big(\|\ell\|\|(\varphi_k,\psi_k)\|+\|(\varphi_k,\psi_k)\|^2\big)\nonumber\\
&=&
I(S_{\epsilon,k},T_{\epsilon,k})+O\big(\frac{1}{r^{m-1-\sigma}}\big)\nonumber\\
&=&A+k(\frac{a_1\gamma_12}{r^m}+\frac{a_2\gamma_2^2}{r^m})B-C_\beta\frac{k^2}{r}e^{-\frac{r\pi}{\epsilon k}}+O(ke^{-\frac{\pi r}{\epsilon k}}).
\end{eqnarray*}
Consider the following maximization problem
\begin{equation}\label{e2.8}
\max\limits_{r\in D_k}F(r).
\end{equation}
Assume that \eqref{e2.8} is achieved by some $r^{*}$ in $D_k.$ We will prove that $r^{*}$ is an interior point of
$D_k$.
Note that $C_\beta>0$.
Define
\begin{align*}
g(t)=\frac{aB}{r^m}-C_\beta\frac{k}{r}e^{-\frac{2\pi r}{\epsilon k}}=\frac{aB}{t^mk^m}-C_\beta\frac{e^{-2\pi t}}{t}.
\end{align*}
Then
\begin{align*}
g'(t)=-\frac{aB}{t^{m+1}k^m}+\frac{2\pi C_\beta e^{-2\pi t}}{t}+\frac{C_\beta e^{-2\pi t}}{t^2}.
\end{align*}
It is easy to check that $g(t)$ has a maximum point of $t_k$ with
\begin{align*}
t_k=\big(\frac{m}{2\pi}+o(1)\big)\ln k.
\end{align*}
So the function $F(r)$ has a maximum point of
$$r=kt_k=\big(\frac{m}{2\pi}+o(1)\big)k\ln k.$$
Hence, it follows from the expression of $F(r)$ that the maximizer of $r_k$ is an interior point of $D_k$. We can use the same argument as \cite{PW13} to show the solutions are positive.
\end{proof}
\subsection{Dichotomy for Segregated solutions}
In this subsection, we construct dichotomous segregated peak-solutions of \eqref{eq0}.

Recall that
$$
 \bar{S}_{\epsilon,k}(y):=\sum_{j=1}^{k}W_{\epsilon,\mu_1,x^{j}}(y),~~~~~\bar{T}_{\epsilon,k}(y):=\sum_{j=1}^{k}W_{\epsilon,\mu_2,y^{j}}(y).
 $$
Now we define that
\begin{align*}
Y_j=\frac{\partial W_{\epsilon,\mu_1,x^j}}{\partial r},~~~ Z_j=\frac{\partial W_{\epsilon,\mu_2,y^j}}{\partial \rho},\,j=1,2\cdots,k,
\end{align*}
and
\begin{equation}
\begin{array}{ll}
\bar{E}_k=\Bigl\{(u,v)\in H_{\epsilon,P}\times H_{\epsilon,Q},
 \ds\sum\limits_{j=1}^k\ds\int_{\R^3}W_{\epsilon,\mu_1,x^j}^{2}Y_ju=0,\,\,\sum\limits_{j=1}^k\int_{\R^3}
 W_{\epsilon,\mu_2,y^j}^{2}Z_jvdx=0\Bigr\}.
 \end{array}
\end{equation}

Let
$$
 J(\bar{\varphi}_{k},\bar{\psi}_{k})=I(\bar{S}_{\epsilon,k}(y)+\bar{\varphi}_{k}(y),\bar{T}_{\epsilon,k}(y)+\bar{\psi}_{k}(y)),\ \ \ (\bar{\varphi}_{k},\bar{\psi}_{k})\in \bar{E}_k.
$$
Then we expand $J(\varphi_{k},\psi_{k})$ as
\begin{eqnarray}
J(\bar{\varphi}_{k},\bar{\psi}_{k})=\bar{J}(0,0)+\bar{\ell}(\bar{\varphi}_{k},\bar{\psi}_{k})+\frac{1}{2}\langle \bar{L}(\bar{\varphi}_{k},\bar{\psi}_{k}),(\bar{\varphi}_{k},\bar{\psi}_{k})\rangle-\bar{R}(\bar{\varphi}_{k},\bar{\psi}_{k}),\ \ (\bar{\varphi}_{k},\bar{\psi}_{k})\in \bar{E}_k,
\end{eqnarray}
where
\begin{eqnarray}
\bar{\ell}(\bar{\varphi}_{k},\bar{\psi}_{k})
&=&-\mu_1\int_{\R^3}\Bigl(\bar{S}_{\epsilon,k}^3-u^{3}_{\epsilon}-\sum\limits_{j=1}^kW_{\epsilon,\mu_1,x^j}^3\Bigr)\bar{\varphi}_{k} dx-\mu_2\int_{\R^3}\Bigl(\bar{T}_{\epsilon,k}^3-v^{3}_{\epsilon}-\sum\limits_{j=1}^kW_{\epsilon,\mu_2,y^j}^3\Bigr)\bar{\psi}_{k}  dx
\cr
&&
+\int_{\R^3}\Big(\sum_{j=1}^{k}(P(y)-1)W_{\epsilon,\mu_1,x^{j}}\bar{\varphi}_{k}+\sum_{j=1}^{k}(Q(y)-1)
W_{\epsilon,\mu_2,y^{j}}\bar{\psi}_{k}\Big)dx
\cr
&&
-\beta\int_{\R^3}\Bigl(\bar{S}_{\epsilon,k}^2\bar{T}_{\epsilon,k}-u_{\epsilon}^2v_{\epsilon}\Bigr)\bar{\psi}_{k}dx
-\beta\int_{\R^3}\Bigl(\bar{T}_{\epsilon,k}^2\bar{S}_{\epsilon,k}-v_{\epsilon}^2u_{\epsilon}\Bigr)\bar{\varphi}_{k}dx,
\end{eqnarray}
and $\bar{L}(\bar{\varphi}_{k},\bar{\psi}_{k})$ and $\bar{R}(\bar{\varphi}_{k},\bar{\psi}_{k})$ are exactly as $L(\varphi_{k},\psi_{k})$ and $R(\varphi_k,\psi_k)$ in section 2, but $U_{\epsilon, x^j}, V_{\epsilon, x^j},\varphi$ and $\psi$ being replaced by $W_{\epsilon,\mu_1, x^j}, W_{\epsilon,\mu_2, x^j},\bar{\varphi}$ and $\bar{\psi}$ respectively.

\begin{lem} \label{lm4.1}
There is a constant $C>0$, independent of $k$, such that for any $(r,\rho)\in D_k\times D_k$,
$$
\bar{L}(\varphi_{k},\psi_{k})\leq C\|(\varphi_{k},\psi_{k})\|,\  (\varphi_{k},\psi_{k})\in \bar{E}_k.
$$
\end{lem}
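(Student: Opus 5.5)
The claim is a uniform boundedness estimate for the bilinear form $\bar L$ on $\bar E_k$: with $\|\cdot\|$ the norm induced by $\|\cdot\|_{\epsilon,P}$ and $\|\cdot\|_{\epsilon,Q}$, we will prove
$$|\langle \bar L(\varphi,\psi),(g,h)\rangle|\le C\|(\varphi,\psi)\|\,\|(g,h)\|$$
for all test pairs $(g,h)$, with $C$ independent of $k$ and of $(r,\rho)\in D_k\times D_k$. This is exactly what Lemma \ref{lm2.1} says in the synchronized case, and the argument is a direct term-by-term Hölder bound.

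\textbf{Step 1: the quadratic part.} We write $\langle \bar L(\varphi,\psi),(g,h)\rangle$ as the sum of
$$\int_{\R^3}\big(\epsilon^2\nabla\varphi\nabla g+P\varphi g+\epsilon^2\nabla\psi\nabla h+Q\psi h\big)$$
and the potential terms
$$-3\mu_1\int \bar S_{\epsilon,k}^2\varphi g,\quad -3\mu_2\int\bar T_{\epsilon,k}^2\psi h,\quad -\beta\int\big(\bar S_{\epsilon,k}^2\psi h+\bar T_{\epsilon,k}^2\varphi g+2\bar S_{\epsilon,k}\bar T_{\epsilon,k}(\varphi h+\psi g)\big).$$
Here $\bar S_{\epsilon,k}, \bar T_{\epsilon,k}$ include $u_\epsilon, v_\epsilon$, as in the definition in Theorem \ref{th1}. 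The quadratic part is bounded by $\|(\varphi,\psi)\|\|(g,h)\|$ via Cauchy–Schwarz in the inner products $\langle\cdot,\cdot\rangle_{\epsilon,P}$ and $\langle\cdot,\cdot\rangle_{\epsilon,Q}$.

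\textbf{Step 2: uniform $L^\infty$ bounds on the coefficients.} For each potential term we use Hölder in the form $\int a\,fg\le\|a\|_{L^\infty}\|f\|_{L^2}\|g\|_{L^2}$, together with $P,Q\ge\alpha$ from $(A_1)$, so that the $L^2$ norms are controlled by the $H^1$ norms. It therefore suffices to bound $\bar S_{\epsilon,k}$ and $\bar T_{\epsilon,k}$ in $L^\infty$ uniformly in $k$.

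\textbf{Step 3: the tail sum.} The terms $u_\epsilon$ and $v_\epsilon$ are fixed bounded functions, since $\epsilon$ is fixed. For the tail we use the exponential decay $W_{\mu_i}(y)\le Ce^{-|y|}$ and the separation $|x^i-x^j|\ge 2r\sin\frac{\pi}{k}\ge c\ln k\to\infty$, which holds because $r\sim k\ln k$ (similarly for the $y^j$). These give
$$\sum_{j=1}^k W_{\epsilon,\mu_1,x^j}(x)\le C+C\sum_{j\ge 2}e^{-c j\ln k/\epsilon}\le C',$$
and the same bound for $\sum_j W_{\epsilon,\mu_2,y^j}$. Thus all potential terms are at most $C\|(\varphi,\psi)\|\|(g,h)\|$, with $C$ depending on $\mu_i$, $\beta$, $\epsilon$ and $\alpha$ but not on $k$.

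\textbf{Main obstacle.} The only place where $k$-uniformity could fail is the sum over $k$ peaks, which is handled by the separation argument in Step 3. No use of the orthogonality constraints in $\bar E_k$ is needed; the bound holds on all of $H_{\epsilon,P}\times H_{\epsilon,Q}$. Taking the supremum over $\|(g,h)\|=1$ gives the lemma.
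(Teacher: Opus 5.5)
Your proof is correct. The paper states Lemma \ref{lm4.1}, like Lemma \ref{lm2.1}, without proof, and your argument (Cauchy--Schwarz for the $H^1$ part, plus H\"older with $k$-uniform $L^\infty$ bounds on $\bar S_{\epsilon,k},\bar T_{\epsilon,k}$ from the exponential decay of $W_{\mu_i}$ and the $O(\ln k)$ separation of the peaks for $r,\rho\in D_k$) is exactly the routine verification the paper takes for granted. One cosmetic fix: in Step 3, organize the sum around the peak $x^{j_0}$ nearest to $x$. Then use $|x-x^j|\ge\frac12|x^{j_0}-x^j|$ and $|x^{j_0}-x^j|\ge 4r\ell/k$ for index distance $\ell\le k/2$, with each $\ell$ occurring at most twice.
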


\begin{lem}\label{lm4.2}
There exist $\varepsilon>0$ small enough  and $\varrho>0$, such that for any $\beta<\varepsilon, (r,\rho)\in D_k\times D_k$, there holds
$$
\bar{L}(\varphi_{k},\psi_{k})\geq \varrho\|(\varphi_{k},\psi_{k})\|,\  \ (\varphi_{k},\psi_{k})\in \bar{E}_k.
$$
\end{lem}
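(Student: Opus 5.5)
We follow the contradiction-and-blow-up scheme of Lemma \ref{lm2.2}, now with the segregated profiles. As there, Lemma \ref{lm4.2} is understood as coercivity of the quadratic form, $\langle \bar L(\varphi_k,\psi_k),(\varphi_k,\psi_k)\rangle\ge\varrho\|(\varphi_k,\psi_k)\|^2$ on $\bar E_k$. Suppose it fails. Then there are $k\to\infty$, $(r,\rho)\in D_k\times D_k$, $\beta=\beta_k$ with $\beta_k\to\beta_0$, $|\beta_0|$ small, and $(\varphi_k,\psi_k)\in\bar E_k$ with $\|(\varphi_k,\psi_k)\|^2=k$ and
\[
\langle \bar L(\varphi_k,\psi_k),(g,h)\rangle=o(\sqrt{k})\|(g,h)\|\quad\text{for all }(g,h)\in\bar E_k .
\]
By the $\frac{2\pi}{k}$-rotational symmetry of $H_{\epsilon,P}\times H_{\epsilon,Q}$, the norm splits equally among the $k$ sectors $\Omega_j$. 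The first component concentrates near $x^j$ and the second near $y^j$. The points $x^j$ and $y^j$ interlace, and their mutual distances are at least $c\,r/k\ge c\ln k\to\infty$.

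\emph{Step 1 (local limits near the outer spikes).} Set $\bar\varphi_k(x)=\varphi_k(x+x^1)$ and $\bar\psi_k(x)=\psi_k(x+y^1)$. These are bounded in $H^1_{loc}$, so along a subsequence they converge weakly to limits $\varphi,\psi$ that are even in $x_2,x_3$. Testing with compactly supported $(g,h)$ centred at $x^1$ and at $y^1$ separately, every term coupling $W_{\epsilon,\mu_1,x^1}$ to $W_{\epsilon,\mu_2,y^j}$ vanishes in the limit. The terms involving $u_\epsilon,v_\epsilon$ vanish as well, since those functions are exponentially small near $|x|=r$. The cross terms $2\beta\bar S\bar T$ are also negligible, because $\bar S\bar T$ is exponentially small wherever either spike lives. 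Hence $\varphi$ solves the scalar linearization $-\epsilon^2\Delta\varphi+\varphi-3\mu_1W_{\epsilon,\mu_1}^2\varphi=0$, and $\psi$ solves the analogous equation with $\mu_2$. The $\beta V^2$-type terms drop out because the other component is far away. By the nondegeneracy of $W_{\mu_i}$ and the evenness in $x_2,x_3$, we get $\varphi=c_1\partial_{x_1}W_{\epsilon,\mu_1}$ and $\psi=c_2\partial_{x_1}W_{\epsilon,\mu_2}$. The two orthogonality conditions defining $\bar E_k$, which are imposed separately on each component, then force $c_1=c_2=0$.

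\emph{Step 2 (the core near the origin).} Normalize by $\frac1{\sqrt k}$ and pass to a weak limit $(\varphi^*,\psi^*)$ in $H^1$. Locally around the origin this limit solves the linearization \eqref{050} at $(u_\epsilon,v_\epsilon)$, since the outer spikes are exponentially small on $B_R(0)$. Proposition \ref{lema.1} gives $(\varphi^*,\psi^*)=0$, so $\varphi_k,\psi_k$ are $o(\sqrt k)$ in $L^q(B_R(0))$.

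\emph{Step 3 (conclusion).} Test with $(\varphi_k,\psi_k)$ itself and split $\R^3$ into $B_R(0)$, $\bigcup_j\big(B_R(x^j)\cup B_R(y^j)\big)$, and the complement. On the complement, $\bar S$, $\bar T$, $u_\epsilon$ and $v_\epsilon$ are all $O(e^{-R/\epsilon})$ uniformly in $k$ (using Lemma \ref{lemb.1}-type sum estimates). On the first two sets, the potential terms are $o(k)$ by Steps 1 and 2 together with symmetry. This leaves
\[
o(k)\ \ge\ k-\big(o(1)+O(e^{-R/\epsilon})+C|\beta|\big)k ,
\]
which is a contradiction once $R$ is large, $k$ is large and $|\beta|<\varepsilon$.

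The main obstacle is the treatment of the coupling terms $\beta\int(\bar S^2\psi_k^2+\bar T^2\varphi_k^2+4\bar S\bar T\varphi_k\psi_k)$. In the synchronized case these were absorbed into the vector kernel of $(U,V)$. Here there is no coupled limit problem to absorb them. Near $x^j$, the term $\beta\bar S^2\psi_k^2$ carries the full spike $W_{\epsilon,\mu_1,x^j}^2$ multiplied by the component $\psi_k$, which is not orthogonalized there. This term is not negligible; it can only be bounded by $C|\beta|\,\|\psi_k\|^2$. It is at this point that the smallness $\beta<\varepsilon$ is used. We would first try to bound it via Sobolev, as $|\beta|\,\|W\|_{L^3}^2\|\psi_k\|_{L^6}^2$, and absorb it into the coercive part. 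The alternative is to include the $\beta$-term in the local limit equation and treat it as a small perturbation of the decoupled operator, whose kernel is known.
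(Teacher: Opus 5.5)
Your Steps 1--3 follow the paper's proof. You normalize $\|(\varphi_k,\psi_k)\|^2=k$ and blow up $\varphi_k$ at $x^1$ and $\psi_k$ at $y^1$. The decoupled scalar limits vanish by the non-degeneracy of $W_{\mu_i}$ and the two separate orthogonality conditions in $\bar E_k$. You then apply Proposition~\ref{lema.1} to the $k^{-1/2}$-rescaled limit near the origin, and absorb $-\beta\int(\bar S_{\epsilon,k}^2\psi_k^2+\bar T_{\epsilon,k}^2\varphi_k^2)$ using the smallness of $\beta$.

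The one real error is your opening reformulation: coercivity of the quadratic form on $\bar E_k$ is false. The pair $(\sum_{j=1}^kW_{\epsilon,\mu_1,x^j},0)$ satisfies the constraints of $\bar E_k$ up to an exponentially small correction, because $\int W_{\epsilon,\mu_1,x^j}^3\,\partial_rW_{\epsilon,\mu_1,x^j}=\frac14\partial_r\int W_{\epsilon,\mu_1,x^j}^4=0$. On this pair the form equals $-2k\mu_1(1+o(1))\int_{\R^3}W_{\epsilon,\mu_1,x^1}^4<0$, so every spike contributes a negative direction. What the lemma means, as Lemma~\ref{lm2.2} and the paper's proof show, is the invertibility bound $\|\bar L(\varphi,\psi)\|\ge\varrho\|(\varphi,\psi)\|$ on $\bar E_k$. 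Your contradiction hypothesis, $\langle\bar L(\varphi_k,\psi_k),(g,h)\rangle=o(\sqrt k)\|(g,h)\|$ for all $(g,h)\in\bar E_k$, is exactly the negation of that bound, not of coercivity. A failure of coercivity would only control $\langle\bar L(\varphi_k,\psi_k),(\varphi_k,\psi_k)\rangle$ and could not feed Step 1. With the target stated correctly, your argument is the paper's.

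Three smaller points:

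\begin{itemize}
\item Do not let $\beta=\beta_k$ vary along the sequence. Step 2 would then need continuity in $\beta$ of the Theorem A solutions $(u_\epsilon,v_\epsilon)$, and Proposition~\ref{lema.1} at a limit $\beta_0$ that $(A_\beta)$ may exclude (e.g.\ $\beta_0=0$). Fix $\beta$, as the paper does.
\item Smallness is only needed for $\beta>0$. For $\beta\le0$ the term $-\beta\int(\bar S_{\epsilon,k}^2\psi_k^2+\bar T_{\epsilon,k}^2\varphi_k^2)$ is nonnegative, which is why the statement reads $\beta<\varepsilon$ rather than $|\beta|<\varepsilon$.
\item Your second alternative removes the smallness entirely for $\beta<\min\{\mu_1,\mu_2\}$. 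Blowing up $\psi_k$ at $x^1$ gives $-\epsilon^2\Delta\hat\psi+\hat\psi=\beta W_{\mu_1}^2(\cdot/\epsilon)\hat\psi$ with $\hat\psi\in H^1(\R^3)$. The operator $-\epsilon^2\Delta+1-\mu_1W_{\mu_1}^2(\cdot/\epsilon)$ is nonnegative, since the positive function $W_{\mu_1}(\cdot/\epsilon)$ lies in its kernel. Hence $-\epsilon^2\Delta+1-\beta W_{\mu_1}^2(\cdot/\epsilon)\ge(1-\beta/\mu_1)(-\epsilon^2\Delta+1)$ for $0\le\beta<\mu_1$, which forces $\hat\psi=0$. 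Thus $\int_{B_R(x^j)}\psi_k^2=o(1)$, and $\beta\int\bar S_{\epsilon,k}^2\psi_k^2$ is absorbed like the other terms in Step 3.
\end{itemize}
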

\begin{proof}
We argue by contradiction. Assume that there exist $k\rightarrow+\infty$, $(\varphi_k,\psi_k)\in \bar{E}_k$ satisfying $\|\bar{L}(\varphi_k,\psi_k)\|=o(1)\|(\varphi_k,\psi_k)\|^2$.
Without loss of generality, we may assume $\|(\varphi_k,\psi_k)\|^2=k$, we have
\begin{align*}
\langle \bar{L}(\varphi_k,\psi_k),(g,h)\rangle=o(1)\|(\varphi_k,\psi_k)\|\|(g,h)\|.
\end{align*}
Taking $(g,h)=(\varphi_k,\psi_k)$, we have
\begin{align*}
o(k)=&\int_{\R^3}\epsilon^2|\nabla\varphi_k|^2+P(x)\varphi_k^2-3\mu_1\bar{S}_{\epsilon, k}^2\varphi_k^2+\int_{\R^3}\epsilon^2|\nabla\psi_k|^2+Q(x)\psi_k^2-3\mu_2\bar{T}_{\epsilon, k}^2\psi_k^2\cr
&-\beta\int_{\R^3}(\bar{S}_{\epsilon, k}^2\psi_k^2+\bar{T}_{\epsilon, k}^2\varphi_k^2+4\bar{S}_{\epsilon, k}\bar{T}_{\epsilon, k}\varphi_k\psi_k).
\end{align*}
For any fixed $R>0$, there is large enough $k>0$ such that $B_R(x_1)\subset\Omega_1$. Set
\begin{align*}
\tilde{\varphi}_k=\varphi_k(x+x^1),\,\,\,\tilde{\psi}_k=\psi_k(x+y^1).
\end{align*}
We have
\begin{align*}
\int_{B_R(0)}\epsilon^2|\nabla \tilde{\varphi}_k|^2+P(x)\tilde{\varphi}_k^2+\epsilon^2|\nabla\tilde{\psi}_k|^2+Q(x)\tilde{\psi}_k^2\leq1.
\end{align*}
Up to a subsequence, we may assume there exist $(\varphi,\psi)\in H^1(\R^3)\times H^1(\R^3)$ such that
\begin{align}\label{eq1.1}
&(\tilde{\varphi}_k,\tilde{\psi}_k)\rightharpoonup(\varphi,\psi)\,\,\, \hbox{weakly in }\, H_{loc}^1(\R^3)\times H_{loc}^1(\R^3),\cr
&(\tilde{\varphi}_k,\tilde{\psi}_k)\rightarrow(\varphi,\psi)\,\,\,\hbox{strongly in}\,L_{loc}^q(\R^3)\times L_{loc}^q(\R^3),q\in[2,6).
\end{align}

Moreover, $\varphi,\psi$ are even in $x_2,x_3$ and satisfy
\begin{align}\label{eq2}
\int_{\R^3}U_\epsilon^2\frac{\partial U_{\epsilon,x^1}}{\partial r}\varphi=0,\,\int_{\R^3}U_\epsilon^2\frac{\partial U_{\epsilon,y^1}}{\partial \rho}\psi=0,
\end{align}
where $U_\epsilon=U(\frac{x}{\epsilon})$ and $U_{\epsilon,z}=U(\frac{x-z}{\epsilon})$ for some $z\in \R^3$.
Let $g\in C_0^\infty(B_R(0))$ and be even in $x_h,h=2,3$. Define $g_1(x)=g_1(x-x^1)\in C_0^\infty(B_R(x^1))$. Then choosing $(g,h)=(g_1,0)$ and proceeding as we did in Lemma \ref{lm2.2}, we can see that $\varphi$ satisfies $-\Delta\varphi+\varphi=3\mu_1U^2\varphi$.

Moreover, in view of the non-degeneracy of $U_\epsilon$, it holds $\varphi=0$.
By \eqref{eq1.1}, we see
\begin{align*}
o(1)=\int_{B_R(0)}\tilde{\varphi}_k^qdx=\int_{B_R(x^1)}\varphi_k^qdx.
\end{align*}
Using the same argument on $\tilde{\Omega}$, we can prove that as $k\rightarrow+\infty$,
\begin{align}\label{3.3}
\tilde{\psi}_k\rightarrow0,\,\,\hbox{weakly in }\, H_{loc}^1(\R^3),\,\,\,\tilde{\psi}_k\rightarrow0,\,\,\,\hbox{strongly in} \,L_{loc}^2(\R^3).
\end{align}
As a result, it holds
\begin{align*}
\int_{B_R(x^1)}\varphi^2=o(1),\,\,\,\,\,\,\int_{B_R(y^1)}\psi^2=o(1), \forall\,\,\,R>0.
\end{align*}
Similar as Lemma \ref{lm2.2}, we get
\begin{align*}
\Big(\int_{B_R(0)}\varphi_k^qdx\Big)^{\frac{1}{q}}=o(\sqrt{k}).
\end{align*}
Then we find
\begin{align}\label{eqs33}
o(k)=&\int_{\R^3}\epsilon^2|\nabla\varphi_k|^2+P(x)\varphi_k^2-3\mu_1\bar{S}_{\epsilon, k}^2\varphi_k^2+\int_{\R^3}\epsilon^2|\nabla\psi_k|^2+Q(x)\psi_k^2-3\mu_2\bar{T}_{\epsilon, k}^2\psi_k^2\cr
&-\beta\int_{\R^3}(\bar{S}_{\epsilon, k}^2\psi_k^2+\bar{T}_{\epsilon, k}^2\varphi_k^2+4\bar{S}_{\epsilon, k}\bar{T}_{\epsilon, k}\varphi_k\psi_k)\cr
=&k-3\mu_1\int_{\R^3}(u_{\epsilon, k}^2+U_{\epsilon,r}^2)\varphi_k^2-3\mu_2\int_{\R^3}(v_{\epsilon, k}^2+W_{\epsilon, \rho}^2)\psi_k^2-\beta\int_{\R^3}\bar{S}_{\epsilon, k}^2\psi_k^2+\bar{T}_{\epsilon ,k}^2\varphi_k^2\cr
\geq&k-(o(1)+O(e^{-\frac{R}{\epsilon}}))k-\beta\int_{\R^3}(\bar{S}_{\epsilon, k}^2\psi_k^2+\bar{T}_{\epsilon, k}^2\varphi_k^2).
\end{align}
If we choose $\beta<\frac{1}{C}$ for constant $C$ large enough, then \eqref{eqs33} is impossible for large $R$ and $k$. Consequently, we complete the proof.
\end{proof}
\begin{lem}\label{lm2.4}
There exists a constant $C>0$, independent of $k$  such that
$$
\bar{R}^{(i)}(\varphi,\psi,\xi)\leq C\|(\varphi,\psi,\xi)\|^{3-i},\ i=0,1,2.
$$
\end{lem}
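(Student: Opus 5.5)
We read $\bar R(\varphi,\psi,\xi)$ as the higher-order part of the functional, viewed as a function of the triple of perturbations $(\varphi,\psi,\xi)$, and equip the triple with the product norm $\|(\varphi,\psi,\xi)\|^2=\|(\varphi,\psi)\|^2+\|\xi\|^2$. In the segregated setting the functional $\bar R$ is built exactly as $R$ in \eqref{wqf2.5}, with $S_{\epsilon,k},T_{\epsilon,k}$ replaced by $\bar S_{\epsilon,k},\bar T_{\epsilon,k}$. The third slot $\xi$ can only enter through terms of the same homogeneity: it is either absent or multiplies the perturbations in cubic and quartic monomials with bounded coefficients. The key reduction is that $\bar R$ is then a finite sum of two kinds of terms:
\begin{itemize}
\item cubic forms $\int \Theta\, a\,b\,c$, where $\Theta\in\{\bar S_{\epsilon,k},\bar T_{\epsilon,k}\}$ and $a,b,c\in\{\varphi,\psi,\xi\}$;
\item quartic forms $\int a\,b\,c\,d$ with constant coefficients $\mu_1,\mu_2,\beta$.
\end{itemize}
Since $\|(\varphi,\psi)\|\le\|(\varphi,\psi,\xi)\|$, any term not involving $\xi$ is automatically controlled by the triple norm. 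So it suffices to prove the estimate for a generic cubic and a generic quartic monomial.

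The first step is a uniform bound $\|\bar S_{\epsilon,k}\|_{L^\infty}+\|\bar T_{\epsilon,k}\|_{L^\infty}\le C$, independent of $k$. This holds because $u_\epsilon,v_\epsilon$ are bounded for fixed $\epsilon$ and the $W_{\epsilon,\mu_i}$ are exponentially decaying. The peaks $x^j,y^j$ are mutually separated by at least $c\ln k$ and lie far from the origin. Hence at any point $x$ the sum $\sum_j W_{\epsilon,\mu_1}(|x-x^j|/\epsilon)$ is dominated by a convergent geometric-type series, which is bounded uniformly in $k$. The second step uses that $\epsilon$ is fixed, so the norms $\|\cdot\|_{\epsilon,P}$ and $\|\cdot\|_{\epsilon,Q}$ are equivalent to the $H^1(\R^3)$ norm by $(A_1)$. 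The Sobolev embedding $H^1(\R^3)\hookrightarrow L^q$ for $q\in[2,6]$ then holds with constants independent of $k$.

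For $i=0$, H\"older's inequality gives
\[
\Big|\int\Theta\,abc\Big|\le\|\Theta\|_{L^\infty}\|a\|_{L^3}\|b\|_{L^3}\|c\|_{L^3}\le C\|(\varphi,\psi,\xi)\|^3
\]
and
\[
\Big|\int abcd\Big|\le C\|(\varphi,\psi,\xi)\|^4,
\]
which is at most $C\|(\varphi,\psi,\xi)\|^3$ in the relevant regime where the norm is bounded. This regime is exactly the set $M$ on which the contraction argument of Proposition \ref{pro2.6} is run.

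For $i=1,2$, we differentiate the multilinear forms: $\bar R'$ and $\bar R''$ consist of terms with one, respectively two, of the slots replaced by test functions $(g,h,\zeta)$. Taking the supremum over unit test triples in the same H\"older/Sobolev chain yields the bounds $\|\bar R'\|\le C\|(\varphi,\psi,\xi)\|^2$ and $\|\bar R''\|\le C\|(\varphi,\psi,\xi)\|$.

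The only genuine obstacle is the $k$-uniformity of the $L^\infty$ bound on $\bar S_{\epsilon,k}$ and $\bar T_{\epsilon,k}$. I would handle it by splitting $\R^3$ into the sectors $\Omega_j$ adapted to the $k$-fold symmetry and estimating $\sum_{i\ne j}e^{-|x-x^i|/\epsilon}\le C e^{-c\ln k/\epsilon}$ on $\Omega_j$. The rest is a routine computation along the lines of \cite{PW13}.
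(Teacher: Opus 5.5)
Your proposal is correct and follows essentially the same route as the paper, which only says the bounds come from direct computation (pointing to Peng--Wang for the analogous lemma): a $k$-uniform $L^\infty$ bound on $\bar S_{\epsilon,k},\bar T_{\epsilon,k}$, then H\"older and the Sobolev embedding (with $\epsilon$ fixed) applied to the cubic and quartic monomials and to their first and second derivatives. Your caveat that the quartic terms force the $i=0,1$ estimates to be restricted to bounded $\|(\varphi,\psi,\xi)\|$, as on the set $M$ of Proposition \ref{pro4.5}, is correct and is left implicit in the paper's statement.
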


\begin{proof}
These estimates can be obtained by direct computations.
\end{proof}
\begin{lem}\label{lm4.4}
There exist constants $C>0$, independent of $k$, such that
$$
\|\bar{\ell}_{k}\|\leq C\Bigl(\frac{k}{r^m}+\frac{k}{\rho^m}+(\frac{k}{r}e^{-\frac{r}{\epsilon}}+\frac{k}{\rho}e^{-\frac{\rho}{\epsilon}})+\frac{k}{r}\sum\limits_{i, j}e^{-\frac{|x^i-y^j|}{\epsilon}}\Bigr).
$$
\end{lem}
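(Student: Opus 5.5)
The estimate is proved by mirroring the proof of Lemma \ref{lm2.5}, adapted to the segregated ansatz. First I split $\bar{\ell}(\bar\varphi_k,\bar\psi_k)$ into four groups:
\begin{itemize}
\item[(i)] the potential terms $\sum_j\int(P-1)W_{\epsilon,\mu_1,x^j}\bar\varphi_k$ and $\sum_j\int(Q-1)W_{\epsilon,\mu_2,y^j}\bar\psi_k$;
\item[(ii)] the self-interaction terms $\bar S_{\epsilon,k}^3-u_\epsilon^3-\sum_j W^3_{\epsilon,\mu_1,x^j}$ and the analogue for $\bar T_{\epsilon,k}$;
\item[(iii)] the mixed $u_\epsilon$--spike terms;
\item[(iv)] the coupling terms $\bar S_{\epsilon,k}^2\bar T_{\epsilon,k}-u_\epsilon^2v_\epsilon$ and $\bar T^2_{\epsilon,k}\bar S_{\epsilon,k}-v_\epsilon^2u_\epsilon$.
\end{itemize}
Here $\bar S_{\epsilon,k},\bar T_{\epsilon,k}$ include $u_\epsilon,v_\epsilon$ as in Theorem \ref{th1}. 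Throughout I use the $\mathbb Z_k$ symmetry of $H_{\epsilon,P}\times H_{\epsilon,Q}$ to reduce every sum over $j$ to $k$ times the contribution near $x^1$, respectively $y^1$.

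For group (i), I translate to $x^1$ and use $(H_2)$. On $B_{r/2}(x^1)$ we have $|P(x)-1|\le C r^{-m}$. Outside this ball, $W_{\epsilon,\mu_1}$ is exponentially small, $O(e^{-r/(2\epsilon)})$. Hölder's inequality then gives the bound $C k r^{-m}\|(\bar\varphi_k,\bar\psi_k)\|$, and similarly $C k\rho^{-m}\|(\bar\varphi_k,\bar\psi_k)\|$ for $Q$.

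For groups (ii) and (iii), I expand the cubes exactly as in \eqref{2.3}. The cross terms $u_\epsilon^2W_{\epsilon,\mu_1,x^j}$ and $u_\epsilon W^2_{\epsilon,\mu_1,x^j}$ are estimated by splitting $\R^3$ into $B_\tau(x^1)$ and its complement, as in \eqref{2.4}--\eqref{2.6}. Since $u_\epsilon$ decays like $e^{-|x|/\epsilon}$ away from $y_0=0$ and $|x^1|=r$, these terms give $\frac{k}{r}e^{-r/\epsilon}$, and likewise $\frac{k}{\rho}e^{-\rho/\epsilon}$ for the $v$-component. The pure spike--spike terms $W^2_{\epsilon,\mu_1,x^i}W_{\epsilon,\mu_1,x^j}$ with $i\ne j$ are bounded by $e^{-|x^i-x^j|/\epsilon}$. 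Since $|x^i-x^j|\ge|x^i-y^l|$ for the nearest $y^l$ (the $y^j$ interlace the $x^j$), these are dominated by the sum $\frac{k}{r}\sum_{i,j}e^{-|x^i-y^j|/\epsilon}$.

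Group (iv) is the genuinely new coupling contribution. Expanding $\bar S^2_{\epsilon,k}\bar T_{\epsilon,k}-u_\epsilon^2v_\epsilon$ produces three kinds of terms:
\begin{itemize}
\item products $W^2_{\epsilon,\mu_1,x^i}W_{\epsilon,\mu_2,y^j}$ and $W_{\epsilon,\mu_1,x^i}W^2_{\epsilon,\mu_2,y^j}$, which give $e^{-|x^i-y^j|/\epsilon}$ by the standard interaction estimate $\int W_{\epsilon,x}^2W_{\epsilon,y}\le Ce^{-|x-y|/\epsilon}$, up to polynomial factors absorbed in $\frac{1}{r}$;
\item terms involving one of $u_\epsilon,v_\epsilon$ times spikes, handled as in (iii);
\item terms with $u_\epsilon v_\epsilon$ times a spike, also exponentially small in $r$ or $\rho$.
\end{itemize}
Collecting (i)--(iv) and taking the supremum over $\|(\bar\varphi_k,\bar\psi_k)\|=1$ yields the claimed bound on $\|\bar\ell_k\|$.

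The main obstacle is group (iv), specifically making the inter-species interactions sharp. I would write $|x^1-y^j|$ explicitly via the angles $(2j-1)\pi/k$ and sum over $j$, which shows the sum is dominated by the two nearest neighbours $y^1,y^k$. I would also check that the prefactor is no worse than $k/r$ even when $r\ne\rho$. Care is likewise needed because $\beta$ may be negative, so the coupling terms cannot be dropped by a sign argument; their absolute values must be bounded term by term.
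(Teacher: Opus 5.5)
There is a genuine gap in group (ii), in how you dispose of the same-species spike interactions $W^2_{\epsilon,\mu_1,x^i}W_{\epsilon,\mu_1,x^j}$ and $W^2_{\epsilon,\mu_2,y^i}W_{\epsilon,\mu_2,y^j}$ with $i\neq j$. Your claim $|x^i-x^j|\ge|x^i-y^l|$ for the nearest $y^l$ uses only the angular interlacing. But $r$ and $\rho$ vary independently in $D_k$, and the lemma is needed uniformly on $D_k\times D_k$, since it feeds Proposition \ref{pro4.5} and the maximization over $D_k\times D_k$. For all $i,l$ one has $|x^i-y^l|\ge|r-\rho|$, whereas $|x^1-x^2|=2r\sin\frac{\pi}{k}=O(\ln k)$ for $r\in D_k$. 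As soon as $|r-\rho|>2r\sin\frac{\pi}{k}$, every term of $\sum_{i,j}e^{-|x^i-y^j|/\epsilon}$ is smaller than $e^{-|x^1-x^2|/\epsilon}$ by a factor at most $e^{-(|r-\rho|-2r\sin\frac{\pi}{k})/\epsilon}$. This is the situation on most of $D_k\times D_k$, because $|r-\rho|$ can be as large as $2\delta k\ln k$. So the domination you invoke holds only in the strip $|r-\rho|\le 2r\sin\frac{\pi}{k}$, and outside it your argument does not give the stated bound.

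These terms have to be estimated on their own. One option is to keep $\frac{k}{r}\sum_{i\neq j}e^{-|x^i-x^j|/\epsilon}+\frac{k}{\rho}\sum_{i\neq j}e^{-|y^i-y^j|/\epsilon}$ in the bound, exactly as $K_{23}$ is kept in Lemma \ref{lm2.5}. The same interactions produce the $\bar D_1$, $\bar E_1$ terms in the energy expansion of Proposition \ref{lemlpoh}. To get the bound in the stated form, you would instead absorb them into $\frac{k}{r^m}$ and $\frac{k}{\rho^m}$ using only $r,\rho\in D_k$. Nearest neighbours dominate, so $\frac{k}{r}\sum_{i\neq j}e^{-|x^i-x^j|/\epsilon}\le C\frac{k^2}{r}e^{-2r\sin(\pi/k)/\epsilon}\le C\frac{k^2}{r}k^{-(m-2\pi\delta)/\epsilon}$. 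This is $o(k r^{-m})$ provided $\epsilon<1-\frac{2\pi\delta}{m}$ (with $D_k$ as literally defined), and the same holds in $\rho$.

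For comparison, the paper's own estimate of $M_2$ lists only the $u_\epsilon$--spike cross terms and says nothing about the spike--spike terms. You were right to flag them; what fails is the interlacing shortcut. The rest of your argument (groups (i), (iii), (iv)) is essentially the paper's $M_1$, $M_2$, $M_3$ computation. One minor point there: Lemma \ref{lemb.1} only gives $u_\epsilon,v_\epsilon\le C\sum_j e^{-\theta|x-\eta^j|/\epsilon}$ with some $\theta<1$, not decay like $e^{-|x|/\epsilon}$. So the $u_\epsilon$--spike cross terms are a priori only $O(ke^{-\theta r/\epsilon})$. This is harmless, since with $r\sim k\ln k$ such terms are negligible compared with $k/r^m$.
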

\begin{proof}
Note that
\begin{eqnarray}\label{eqs3.7}
\bar{\ell}(\varphi_{k},\psi_{k})
&=&
\int_{\R^3}\Big(\sum_{j=1}^{k}(P(x)-1)W_{\epsilon,\mu_1,x^{j}}\varphi_{k}+\sum_{j=1}^{k}(Q(x)-1)W_{\epsilon,\mu_2,y^{j}}\psi_{k}\Big)dx\cr
&&-\mu_1\int_{\R^3}\Bigl(\bar{S}_{\epsilon,k}^3-u^{3}_{\epsilon}-\sum\limits_{j=1}^kW_{\epsilon,\mu_1,x^j}^3\Bigr)\varphi_{k} -\mu_2\int_{\R^3}\Bigl(\bar{T}_{\epsilon,k}^3-v^{3}_{\epsilon}-\sum\limits_{j=1}^kW_{\epsilon,\mu_2,y^j}^3\Bigr)\psi_{k}
\cr
&&
-\beta\int_{\R^3}\Bigl(\bar{S}_{\epsilon,k}^2\bar{T}_{\epsilon,k}-u_{\epsilon}^2v_{\epsilon}\bigr)\psi_{k} dx
-\beta\int_{\R^3}\Bigl(\bar{T}_{\epsilon,k}^2\bar{S}_{\epsilon,k}-v_{\epsilon}^2u_{\epsilon}\bigr)\varphi_{k}dx\cr.
&=:&M_1+M_2+M_3.
\end{eqnarray}

We compute that
\begin{align}\label{2.2}
M_1=&\int_{\R^3}\Big(\sum_{j=1}^{k}(P(x)-1)W_{\epsilon,\mu_1,x^{j}}\varphi_{k}+(Q(x)-1)W_{\epsilon,\mu_2,y^{j}}\psi_{k}\Big)\cr
\leq& k\Big(\frac{1}{r^m}+\frac{1}{\rho^m}\Big)\|(\varphi_k,\psi_{k})\|+O(ke^{-\frac{k\pi}{\epsilon}})\|(\varphi_k,\psi_{k})\|,
\end{align}

\begin{align*}
M_2=&-\mu_1\int_{\R^3}\Bigl(\bar{S}_{\epsilon,k}^3-u^{3}_{\epsilon}-\sum\limits_{j=1}^kW_{\epsilon,\mu_1,x^j}^3\Bigr)\varphi_{k} -\mu_2\int_{\R^3}\Bigl(\bar{T}_{\epsilon,k}^3-v^{3}_{\epsilon}-\sum\limits_{j=1}^kW_{\epsilon,\mu_2,y^j}^3\Bigr)\psi_{k}\cr
\leq& C\Big(\frac{k}{r}e^{-\frac{r}{\epsilon}}+C\frac{\sqrt{k}}{r}e^{-\frac{r}{\epsilon}}\Big)\|\varphi_k\|
+\Big(\frac{k}{\rho}e^{-\frac{\rho}{\epsilon}}+C\frac{\sqrt{k}}{\rho}e^{-\frac{\rho}{\epsilon}}\Big)\|\psi_k\|,
\end{align*}
and
\begin{align}\label{2.8}
M_{3}=&\int_{\R^3}\Bigl(\bar{S}_{\epsilon,k}^2\bar{T}_{\epsilon,k}-u_{\epsilon}^2v_{\epsilon}\bigr)\psi_{k} dy+\int_{\R^3}\Bigl(\bar{T}_{\epsilon,k}^2\bar{S}_{\epsilon,k}-v_{\epsilon}^2u_{\epsilon}\bigr)\varphi_{k}dx\cr
&=C\int_{\R^3}\sum\limits_{ij}W_{\epsilon,x^i}^2W_{\epsilon,y^j}+W_{\epsilon,r}^2v_{\epsilon, k}+u_{\epsilon, k}^2W_{\epsilon,\rho}\cr
\leq&C\Big(\frac{k}{r}\sum\limits_{i, j}e^{-\frac{|x^i-y^j|}{\epsilon}}+\frac{k}{r}e^{-\frac{r}{\epsilon}}+\frac{k}{\rho}e^{-\frac{\rho}{\epsilon}}\Big)\|(\varphi_k,\psi_k)\|.
\end{align}
Combining \eqref{eqs3.7}-\eqref{2.8}, we complete the proof.
\end{proof}
\begin{prop}\label{pro4.5}
There is an integer $\varepsilon>0$ such that for each $\beta\leq\epsilon$,
there is a $C^1$ map from $D_k\times D_k $ to $\bar{E}:(\varphi,\psi)=(\varphi(r),\psi(\rho))$, $r=|x^1|,\rho=|y^1|$ satisfying $(\varphi,\psi)\in \bar{E}_k$, and
 $$
 \frac{\partial J(\varphi,\psi)}{\partial(\varphi,\psi)}\Big|_{\bar{E}_k}=0.
 $$
Moreover, there is a constant $C$, such that
$$
\|(\varphi,\psi)\|\leq C\Bigl(\frac{k}{r^m}+\frac{k}{\rho^m}+(\frac{k}{r}e^{-\frac{r}{\epsilon}}+\frac{k}{\rho}e^{-\frac{\rho}{\epsilon}})+\frac{k}{r}\sum\limits_{i, j}e^{-\frac{|x^i-y^j|}{\epsilon}}\Bigr).
$$
\end{prop}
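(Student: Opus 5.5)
The plan follows the proof of Proposition \ref{pro2.6}, now on the two-parameter family $(r,\rho)\in D_k\times D_k$. By the Riesz representation theorem in $\bar E_k$ (with the inner product $\langle\cdot,\cdot\rangle$ of $H$), the bounded linear functional $\bar\ell$ is represented by some $\bar\ell_k\in\bar E_k$ with $\|\bar\ell_k\|=\|\bar\ell\|$. Likewise the quadratic form $\bar L$ is realized by a bounded self-adjoint operator on $\bar E_k$. Then $(\varphi,\psi)\in\bar E_k$ is a constrained critical point of $J$ exactly when
$$\bar\ell_k+\bar L(\varphi,\psi)-\bar R'(\varphi,\psi)=0 .$$
Lemma \ref{lm4.1} gives boundedness of $\bar L$. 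Lemma \ref{lm4.2}, which is where the smallness $\beta<\varepsilon$ enters, gives $\|\bar L(\varphi,\psi)\|\ge\varrho\|(\varphi,\psi)\|$ on $\bar E_k$, uniformly in $k\ge k_0$ and $(r,\rho)\in D_k\times D_k$. Since $\bar L$ is self-adjoint, this lower bound makes it invertible with $\|\bar L^{-1}\|\le\varrho^{-1}$. The equation is therefore equivalent to the fixed point problem
$$(\varphi,\psi)=\bar A(\varphi,\psi):=-\bar L^{-1}\bar\ell_k+\bar L^{-1}\bar R'(\varphi,\psi).$$

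Denote by $\Theta_k$ the right-hand side of the bound in Lemma \ref{lm4.4}. I will work on the ball
$$\bar M=\{(\varphi,\psi)\in\bar E_k:\ \|(\varphi,\psi)\|\le C_0\Theta_k\},$$
with $C_0$ large. First, $\Theta_k\to0$ uniformly on $D_k\times D_k$. Indeed $r,\rho\sim k\ln k$ and $m>2$, so $k/r^m\to0$. The self-interaction terms are exponentially small. For the interaction term, $|x^i-y^j|\ge c\,r/k\ge c'\ln k$, and since $r/k\approx\frac{m}{2\pi}\ln k$ the sum $\frac{k}{r}\sum_{i,j}e^{-|x^i-y^j|/\epsilon}$ decays like a negative power of $k$ once $\epsilon$ is fixed small.

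With $\Theta_k\to0$ in hand, the contraction argument is routine. By Lemma \ref{lm2.4} for $\bar R$,
$$\|\bar A(\varphi,\psi)\|\le\varrho^{-1}\big(\|\bar\ell\|+C\|(\varphi,\psi)\|^2\big)\le\varrho^{-1}C\Theta_k+C\varrho^{-1}C_0^2\Theta_k^2\le C_0\Theta_k$$
for $C_0=2C\varrho^{-1}$ and $k$ large. Similarly
$$\|\bar A(w_1)-\bar A(w_2)\|\le\varrho^{-1}\sup\|\bar R''\|\,\|w_1-w_2\|\le C\varrho^{-1}C_0\Theta_k\|w_1-w_2\|\le\tfrac12\|w_1-w_2\|.$$
The contraction mapping theorem then gives a unique fixed point in $\bar M$, which yields the stated estimate. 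The $C^1$ dependence on $(r,\rho)$ follows from the implicit function theorem applied to
$$G(r,\rho,w)=w-\bar A_{r,\rho}(w).$$
Here $\partial_wG=I-D\bar A$ is invertible because $\|D\bar A\|\le\frac12$. Also $\bar\ell$, $\bar L$, $\bar R$ and the constraint space depend smoothly on $(r,\rho)$. The constraint space is handled by projecting onto $\bar E_k$ along the span of $\sum_jW^2Y_j$ and $\sum_jW^2Z_j$, whose images vary smoothly in $(r,\rho)$.

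The main obstacle is the uniformity in Lemma \ref{lm4.2}: the constants $\varrho$ and $\varepsilon$ must not depend on $(r,\rho)$ or $k$. In particular, the cross term $\beta\int(\bar S^2\psi^2+\bar T^2\varphi^2)$ has to be absorbed for $\beta$ below a threshold independent of $k$. I will check this by bounding $\bar S^2,\bar T^2$ in $L^\infty$ independently of $k$, using $u_\epsilon,v_\epsilon$ bounded and the spikes being separated by at least $c\ln k$. That bound shows the threshold $1/C$ in \eqref{eqs33} is uniform. All other steps are direct transcriptions of Proposition \ref{pro2.6}.
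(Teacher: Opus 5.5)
Your proposal is correct and follows the same route as the paper: you represent $\bar\ell$ in $\bar E_k$ by the Riesz theorem, invert $\bar L$ on $\bar E_k$ using Lemmas \ref{lm4.1}--\ref{lm4.2} (this is where the smallness of $\beta$ enters), and apply the contraction mapping theorem on the ball of radius $C\|\bar\ell\|$, using the remainder bounds on $\bar R$ and Lemma \ref{lm4.4}. The paper calls this ``the standard argument''; you carry it out, by checking that the bound in Lemma \ref{lm4.4} tends to $0$ uniformly on $D_k\times D_k$, giving explicit self-map and contraction constants, and getting the $C^1$ dependence on $(r,\rho)$ from the implicit function theorem. Your sign $+\bar L^{-1}\bar R'$ is the one consistent with the expansion $J=\bar J(0,0)+\bar\ell+\frac12\langle\bar L\cdot,\cdot\rangle-\bar R$; the paper's $-\bar L^{-1}\bar R'$ is harmless for the contraction.
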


\begin{proof}
We know that $\bar{\ell}(\varphi,\psi)$ is a bounded linear functional in $\bar{E}_k$, which implies that  there is an $\bar{\ell}_\epsilon\in \bar{E}_k$, such that
$$
\bar{\ell}(\varphi,\psi) =\langle \bar{\ell}_\epsilon,(\varphi,\psi)\rangle.
$$
Thus finding a critical point for $J(\varphi,\psi)$ is equivalent to solving
$$
\bar{\ell}_\epsilon+\bar{L}(\varphi,\psi)+\bar{R}^{'}(\varphi,\psi)=0,
$$
which is also equivalent to
$$
(\varphi,\psi)=A(\varphi,\psi)=:-\bar{L}^{-1}\bar{\ell}_\epsilon-\bar{L}^{-1}\bar{R}^{'}(\varphi,\psi),
$$
since $L$ is invertible in $\bar{E}_k$.

Set
\[
M=\Bigl\{ (\varphi,\psi)\in \bar{E}_k:\,\,\|(\varphi,\psi)\|\leq
C\Bigl(\frac{k}{r^m}+\frac{k}{\rho^m}+(\frac{k}{r}e^{-\frac{r}{\epsilon}}+\frac{k}{\rho}e^{-\frac{\rho}{\epsilon}})+\frac{k}{r}\sum\limits_{i, j}e^{-\frac{|x^i-y^j|}{\epsilon}}\Bigr)\Bigr\}.
\]

Following the standard argument, we can prove that  $A$ is a contraction map on $M$. Then by the contraction mapping theorem, we conclude that there exists $(\varphi,\psi)\in \bar{E}_k$, such that $A(\varphi,\psi)=(\varphi,\psi)$. Moreover, there holds
$$
\|(\varphi,\psi)\|\leq  C\Bigl(\frac{k}{r^m}+\frac{k}{\rho^m}+(\frac{k}{r}e^{-\frac{r}{\epsilon}}+\frac{k}{\rho}e^{-\frac{\rho}{\epsilon}})+\frac{k}{r}\sum\limits_{i, j}e^{-\frac{|x^i-y^j|}{\epsilon}}\Bigr).
$$
\end{proof}
Now we will prove Theorem \ref{th1}.

\begin{proof}[\textbf{Proof of Theorem \ref{th1}}]From Lemmas \ref{lm4.2}-\ref{lm4.4}, and Propositions \ref{lemlpoh}, we have
\begin{eqnarray*}
\bar{F}(r,\rho)
&=&
I(\bar{S}_{\epsilon,k},\bar{T}_{\epsilon,k})+\bar{\ell}(\bar{\varphi}_{k},\bar{\psi}_{k})+\frac{1}{2}\langle \bar{L}(\bar{\varphi}_{k},\bar{\psi}_{k}),(\bar{\varphi}_{k},\bar{\psi}_{k})\rangle-\bar{R}(\bar{\varphi}_{k},\bar{\psi}_{k})\nonumber\\
&=&
I(\bar{S}_{\epsilon,k},\bar{T}_{\epsilon,k})+O(\|\bar{\ell}\|\|(\bar{\varphi}_k,\bar{\psi}_k)\|+\|(\bar{\varphi}_k,\bar{\psi}_k)\|^2)\nonumber\\
&=&
I(\bar{S}_{\epsilon,k},\bar{T}_{\epsilon,k})+(\beta^2\frac{k^3}{r^2}e^{-2\sqrt{(\rho-r\cos\frac{\pi}{k})^2+(r\sin\frac{\pi}{k})^2}})
+O(\frac{1}{r^{m-1-\sigma}}+\frac{1}{\rho^{m-1-\sigma}})\nonumber\\
&=&\bar{A}+\frac{a_1k\bar{B}_1}{r^m}+\frac{a_2k\bar{B}_2}{\rho^m}-\bar{D}_1\frac{k^2}{r}e^{\frac{-r\pi}{\epsilon k}}-\bar{E}_1\frac{k^2}{\rho}e^{-\frac{\rho\pi}{\epsilon k}}-\beta\frac{k}{r}\sum\limits_{i,j}^ke^{-\frac{|x^i-y^j|}{\epsilon}}\\
&&+O(\beta^2\frac{k^3}{r^2}e^{-2\sqrt{(\rho-r\cos\frac{\pi}{k})^2+(rsin\frac{\pi}{k})^2}}+e^{-\frac{r}{\epsilon}}+e^{-\frac{\rho}{\epsilon}}).
\end{eqnarray*}
Consider the following maximization problem
\begin{equation}\label{e2.80}
\max\limits_{(r,\rho)\in D_k\times D_k}F(r,\rho).
\end{equation}
Assume that \eqref{e2.80} is achieved by some $(r^{*},\rho^{*})\in D_k\times D_k.$ We will prove that $(r^{*},\rho^{*})$ is an interior point of
$D_k\times D_k$.
We define
\begin{align*}
f(r)=\frac{a_1k\bar{B}_1}{r^m}-\bar{D}_1\frac{k^2}{r}e^{-\frac{\pi r}{\epsilon k}},\,\,\,\,\,h(\rho)=\frac{a_2k\bar{B}_2}{\rho^m}-\bar{E}_1\frac{k^2}{\rho}e^{-\frac{\rho\pi}{\epsilon k}}.
\end{align*}
Then
\begin{align*}
f'(t)=-\frac{m a_1\bar{B}_1}{t^{m+1}k^m}+\frac{\bar{D}_1}{t^2}e^{-\frac{2\pi t }{\epsilon}}+\frac{2\pi \bar{D}_1}{\epsilon}e^{-\frac{2\pi t}{\epsilon}},
\end{align*}
and
\begin{align*}
h'(s)=-\frac{m a_2\bar{B}_2}{s^{m+1}k^m}+\frac{\bar{E}_1}{s^2}e^{-\frac{2\pi s }{\epsilon}}+\frac{2\pi \bar{E}_1}{\epsilon}e^{-\frac{2\pi s}{\epsilon}}.
\end{align*}
It is easy to check that $f(t),h(s)$ have a maximum point $t_k,\,s_k$ satisfying
\begin{align*}
t_k=\big(\frac{m}{2\pi}+o(1)\big)\ln k,\,\,\,\,\,\,s_k=\big(\frac{m}{2\pi}+o(1)\big)\ln k.
\end{align*}
So the function $f(r),\,h(\rho)$ have maximum point
\begin{align*}
r_k=\big(\frac{m}{2\pi}+o(1)\big)k\ln k,\,\,\,\,\,\,\rho_k=\big(\frac{m}{2\pi}+o(1)\big)k\ln k.
\end{align*}
Hence, it follows from the expression of $\bar{F}(r,\rho)$ that the maximizer of $(r_k,\rho_k)$ is an interior point of $D_k\times D_k$.
\end{proof}

\medskip
\section{The non-degeneracy of the dichotomous synchronized peak-solutions}
In this section we are aimed to prove the non-degeneracy of the solution $(u_{\epsilon,k},v_{\epsilon,k})$
obtained in Theorem \ref{th3}.
First we revisit the
existence problem for system \eqref{eq0} and give a different proof of
Theorem \ref{th3} under the assumptions on $P(x),\,Q(x)$. Moreover we can obtain extra
point-wise estimates for the proof of non-degeneracy result.

Recall that
\begin{align*}
&x^{j}=\Big(r\cos\frac{2(j-1)\pi}{k},r\sin\frac{2(j-1)\pi}{k},0\Big),\ \ j=1,\ldots,k.
\end{align*}
Then for $k\gg1$,
\begin{align*}
&|x^{1}-x^{2}|=2r\sin\frac\pi k\sim \frac{2r\pi}k.
\end{align*}
Now we define
the weighted $L^\infty$-norm as
$$\|u\|_*=\sup_{x\in\R^3}\Big(\sum_{j=1}^me^{-\frac{\tau\theta|x-\eta^{j}|}{\epsilon}}
+\sum_{j=1}^ke^{-\frac{\tau|x-x^{j}|}{\epsilon}}\Big)^{-1}|u(x)|,$$
and
\begin{align*}
\|(u,v)\|_*=\|u\|_*+\|v\|_*,
\end{align*}
where $\tau$ is a small parameter and $\theta$ is given in Lemma \ref{lemb.1}. Here without loss of generality, we may asssume
that $\theta<1$.

Let
\begin{small}
\begin{align}\label{L}
&\mathbb L(\varphi_{1},\varphi_{2})=\left(\begin{matrix}
-\Delta\varphi_{1}+P(x)\varphi_{1}-3\mu_1S_{\epsilon,k}^2\varphi_{1}
-2\beta S_{\epsilon,k}T_{\epsilon,k}\varphi_{2}-\beta T_{\epsilon,k}^2\varphi_{1}\nonumber
\\[1mm]
-\Delta\varphi_{2}+Q(x)\varphi_{2}-3\mu_2T_{\epsilon,k}^2\varphi_{2}
-2\beta S_{\epsilon,k}T_{\epsilon,k}\varphi_{1}-\beta S_{\epsilon,k}^2\varphi_{2}
\end{matrix}\right)^\top.\nonumber
\end{align}
\end{small}

For $(f_1,f_2)\in(H_{P_1,s}\cap C(\R^3))\times (H_{P_2,s}\cap C(\R^3))$, we consider the following problem
\begin{equation}\label{eqlinear}
\mathbb L(\varphi_{1},\varphi_{2})=(f_1,f_2)+b_k\Big(\sum_{j=1}^kW_{\epsilon,x^{j}}^2Y_{j},\sum_{j=1}^kW_{\epsilon,x^{j}}^2Z_{j}\Big)
\end{equation}
with $(\varphi_{1},\varphi_{2})\in E_k$ and $b_k\in\R$, where $W_{\epsilon,x^{j}},Y_j,Z_j,E_k,j=1,\cdots,k$ are defined in Section 3.

\begin{lem}\label{lemlinear}
Assume that $(\varphi_{1,k},\varphi_{2,k})$ and $b_k$ solve
 problem \eqref{eqlinear}. Then if $\|(f_{1,k},f_{2,k})\|_*\rightarrow0$ as $k\rightarrow+\infty$, it holds
$\|(\varphi_{1,k},\varphi_{2,k})\|_*\rightarrow0$.
\end{lem}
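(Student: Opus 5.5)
We argue by contradiction, following the blow-up scheme of \cite{GZ-2022} and \cite{PW13} adapted to the weighted norm $\|\cdot\|_*$. Suppose there are $k\to+\infty$ and solutions $(\varphi_{1,k},\varphi_{2,k})\in E_k$, $b_k$ of \eqref{eqlinear} with $\|(f_{1,k},f_{2,k})\|_*\to0$ but $\|(\varphi_{1,k},\varphi_{2,k})\|_*\geq c>0$; after normalizing we take $\|(\varphi_{1,k},\varphi_{2,k})\|_*=1$.

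\textbf{Step 1: the constant $b_k$.} Multiply \eqref{eqlinear} by $(Y_1,Z_1)$ and integrate, using the symmetry to reduce to one bump. The left side is $\langle \mathbb L(\varphi_{1,k},\varphi_{2,k}),(Y_1,Z_1)\rangle$. Since $(Y_1,Z_1)$ is an approximate kernel element of the limit operator at $x^1$, this quantity is $o(1)$ times the $*$-norm. The error comes from $P-1$, $Q-1$ (of order $r^{-m}$), from the interaction with the other bumps (of order $e^{-\pi r/(\epsilon k)}$), and from the interaction with $u_\epsilon,v_\epsilon$ (of order $e^{-r/\epsilon}$). The term $\int (f_1Y_1+f_2Z_1)$ is bounded by $C\|(f_1,f_2)\|_*$. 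On the other side, $\sum_j\int W^2_{\epsilon,x^j}Y_jY_1+\cdots$ equals $c_0+o(1)$ with $c_0>0$. Hence $b_k=o(1)$, and the extra term has $*$-norm $o(1)$.

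\textbf{Step 2: the Green representation and pointwise decay.} Write the equation as $(-\epsilon^2\Delta+P)\varphi_1=3\mu_1S^2\varphi_1+\beta(T^2\varphi_1+2ST\varphi_2)+f_1+b_k(\cdots)$, and similarly for $\varphi_2$. Bound $|\varphi_i(x)|$ by convolving the right-hand side with the Green function of $-\epsilon^2\Delta+P$, which has exponential decay $e^{-\sqrt{\alpha}|x-y|/\epsilon}$. The potential terms $S^2,T^2,ST$ decay like $e^{-2|x-x^j|/\epsilon}$ near the far bumps and like $e^{-2\theta|x-\eta^j|/\epsilon}$ near the inner spikes (by Lemma \ref{lemb.1}). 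Choosing $\tau$ small, the standard summation estimate $\sum_j e^{-\tau|x-x^j|/\epsilon}\le C$ (valid because $|x^i-x^j|\ge C\ln k$) yields
\[
\frac{|\varphi_i(x)|}{\sum_{j}e^{-\tau\theta|x-\eta^j|/\epsilon}+\sum_j e^{-\tau|x-x^j|/\epsilon}}\le C\Big(\|(f_1,f_2)\|_*+o(1)+\sum_{j}\|\varphi\|_{L^\infty(B_{R\epsilon}(\eta^j))}+\sum_j\|\varphi\|_{L^\infty(B_{R\epsilon}(x^j))}\Big)+o_R(1).
\]
Therefore, since $\|\varphi\|_*=1$, the sup must be essentially attained inside some ball $B_{R\epsilon}(x^j)$ or $B_{R\epsilon}(\eta^j)$ for fixed large $R$.

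\textbf{Step 3: local limits vanish.} Near $x^1$ (and by symmetry all $x^j$), translate and rescale to get $\tilde\varphi_i\to\varphi_i$ in $C_{loc}$. The limit solves the limit linearized system \eqref{05} and is even in $x_2,x_3$, so it is a multiple of $(\partial_{x_1}U,\partial_{x_1}V)$. The orthogonality condition in $E_k$ passes to the limit, as in \eqref{eq3.8}, and forces the limit to vanish.

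Near the origin cluster $\{\eta^j\}$ there is no orthogonality condition and a different argument is needed. On a fixed ball $B_\delta(0)$, the functions $\varphi_{i,k}$ are bounded and the far bumps contribute $O(e^{-r/\epsilon})$, so, for the fixed small $\epsilon$, $(\varphi_{1,k},\varphi_{2,k})$ converges in $C_{loc}(\R^3)$ to a bounded solution of $L_\epsilon(\xi_1,\xi_2)=0$. That limit decays exponentially, since the weight forces $|\xi|\le C\sum_j e^{-\tau\theta|x-\eta^j|/\epsilon}$, so it lies in $H^1$, and Proposition \ref{lema.1} gives $\xi=0$. I expect this step to be the main obstacle. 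The difficulty is making the convergence uniform and the decay transfer rigorous, and controlling the coupling terms $2\beta ST\varphi_2$, which mix the two components, so that the limit is genuinely a solution of the full coupled linearization at $(u_\epsilon,v_\epsilon)$.

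Combining Steps 2 and 3 gives $\|(\varphi_{1,k},\varphi_{2,k})\|_*=o(1)$, which contradicts the normalization and proves Lemma \ref{lemlinear}.
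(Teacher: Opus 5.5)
Your proposal is correct and follows essentially the paper's route: argue by contradiction with $\|(\varphi_{1,k},\varphi_{2,k})\|_*=1$, get $b_k\to0$ by testing against $(Y_1,Z_1)$, use the Green-function bound to show the weighted supremum can only sit near the concentration points, and then prove local convergence to zero there. Your explicit treatment of the inner cluster $\{\eta^j\}$ fills in a step the paper leaves implicit: you show the local limit is a decaying kernel element of $L_\epsilon$ and kill it with Proposition~\ref{lema.1}, whereas the paper's appeal to Lemma~\ref{lm2.2} only addresses the balls $B_{R\epsilon}(x^j)$ and its ratio bound \eqref{o} gives no smallness near the $\eta^j$.
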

\begin{proof}
By contradiction, we suppose that there exist $k_n\rightarrow+\infty,
r_{k_n}\in D_{k_n}$,  $(\varphi_{1,k_n},\varphi_{2,k_n})$ and
$b_{k_n}$ solving \eqref{eqlinear}, such that
$\|(f_{1,k_n},f_{2,k_n})\|_*\rightarrow0$ and
$\|(\varphi_{1,k_n},\varphi_{2,k_n})\|_*=1$. For simplicity, we drop
the subscript $k_n$.

Denote $G_i(y,x)$ the Green's function  of $-\Delta+P_i(\epsilon|y|)$ with $i=1,2$. Then by the scaling we have
\begin{align*}
\varphi_1(x)=\int_{\R^3}G_1(y,\frac{x}{\epsilon})&\Big(3\mu_1S_{\epsilon,k}^2(\epsilon y)\varphi_{1}(\epsilon y)
+2\beta  S_{\epsilon,k}(\epsilon y)T_{\epsilon,k}(\epsilon y)\varphi_{2}(\epsilon y)+
\beta T_{\epsilon,k}^2(\epsilon y)\varphi_{1}(\epsilon y)\\
&\qquad+f_1(\epsilon y)+b_k\sum_{j=1}^kW^2\big(y-\frac{x^j}{\epsilon}\big)\frac{\partial U\big(y-\frac{x^j}{\epsilon}\big)}{\partial r}\Big),
\end{align*}
and
\begin{align*}
\varphi_2(x)=\int_{\R^3}G_2(y,\frac{x}{\epsilon})&\Big(3\mu_2T_{\epsilon,k}^2(\epsilon y)\varphi_{2}(\epsilon y)
+2\beta  S_{\epsilon,k}(\epsilon y)T_{\epsilon,k}(\epsilon y)\varphi_{1}(\epsilon y)+
\beta S_{\epsilon,k}^2(\epsilon y)\varphi_{2}(\epsilon y)\\
&\qquad+f_2(\epsilon y)+b_k\sum_{j=1}^kW^2\big(y-\frac{x^j}{\epsilon}\big)\frac{\partial V\big(y-\frac{x^j}{\epsilon}\big)}{\partial r}\Big).
\end{align*}

First, we can get  that for $i=1,2$,
\begin{align*}
\Big|\int_{\R^3}G_i(y,\frac{x}{\epsilon})f_i(\epsilon y)  dy \Big|
 &=\frac{1}{\epsilon^3}\Big|\int_{\R^3}G_i(\frac{y}{\epsilon},\frac{x}{\epsilon})f_i( y)  dy \Big|\\
&\leq
C\|f_i\|_*\int_{\R^3}G_i(\frac{y}{\epsilon},\frac{x}{\epsilon})\Big(\sum_{j=1}^me^{-\frac{\tau\theta|y-\eta^{j}|}{\epsilon}}
+\sum_{j=1}^ke^{-\frac{\tau|y-x^{j}|}{\epsilon}}\Big)dy\\
&\leq C
\|f_i\|_*\Big(\sum_{j=1}^me^{-\frac{\tau\theta|x-\eta^{j}|}{\epsilon}}
+\sum_{j=1}^ke^{-\frac{\tau|x-x^{j}|}{\epsilon}}\Big).
\end{align*}
Note that
\begin{align*}
&\Big|\int_{\R^3}G_1(y,\frac{x}{\epsilon})S_{\epsilon,k}^2(\epsilon y)\varphi_{1}(\epsilon y)dy\Big|\\
&\leq C\Big|\int_{\R^3}G_1(\frac{y}{\epsilon},\frac{x}{\epsilon})
\Big(u_\epsilon+\sum_{j=1}^kU\big(\frac{y-x^j}{\epsilon}\big)\Big)^2\varphi_1( y)  dy \Big|\\
&\leq
C\|\varphi_{1}\|_*\int_{\R^3}G_1(\frac{y}{\epsilon},\frac{x}{\epsilon})
\Big(\sum_{j=1}^me^{-\frac{2\theta|y-\eta^{j}|}{\epsilon}}
+\sum_{j=1}^ke^{-\frac{2|y-x^{j}|}{\epsilon}}\Big)\Big(\sum_{j=1}^me^{-\frac{\tau\theta|y-\eta^{j}|}{\epsilon}}
+\sum_{j=1}^ke^{-\frac{\tau|y-x^{j}|}{\epsilon}}\Big)dy\\
&\leq \underbrace{C\|\varphi_{1}\|_*\int_{\R^3}G_1(\frac{y}{\epsilon},\frac{x}{\epsilon})
\Big(\sum_{j=1}^me^{-(2+\tau)\theta\frac{|y-\eta^{j}|}{\epsilon}}
+\sum_{j=1}^ke^{-(2+\tau)\frac{|y-x^{j}|}{\epsilon}}\Big)dy}_{:=K_1}\\
&\qquad+\underbrace{C\|\varphi_{1}\|_*\int_{\R^3}G_1(\frac{y}{\epsilon},\frac{x}{\epsilon})
\Big[\Big(\sum_{i=1}^me^{-\frac{2\theta|y-\eta^{i}|}{\epsilon}}\Big)\Big(\sum_{j=1}^ke^{-\frac{\tau|y-x^{j}|}{\epsilon}}\Big)
+\Big(\sum_{j=1}^ke^{-\frac{2|y-x^{j}|}{\epsilon}}\Big)\Big(\sum_{i=1}^me^{-\frac{\tau\theta|y-\eta^{i}|}{\epsilon}}\Big)\Big]dy}_{:=K_2}.
\end{align*}
It is obvious to see that
\begin{align*}
K_1\leq C
\|\varphi_1\|_*\Big(\sum_{j=1}^me^{-2\tau\theta\frac{|x-\eta^{j}|}{\epsilon}}
+\sum_{j=1}^ke^{-\frac{2\tau|x-x^{j}|}{\epsilon}}\Big),
\end{align*}
and
\begin{equation*}
   \begin{split}
K_2&\leq C\|\varphi_{1}\|_*\int_{\R^3}G_1(\frac{y}{\epsilon},\frac{x}{\epsilon})
\Big[\Big(\sum_{i,j=1}^ke^{-\frac{\tau|\eta^i-x^{j}|}{\epsilon}}\Big)\Big(\sum_{i=1}^me^{-(2\theta-\tau)\frac{|y-\eta^{i}|}{\epsilon}}\Big)\\
&\qquad\qquad\qquad\qquad\qquad\qquad\qquad+\Big(\sum_{i,j=1}^ke^{-\tau\theta\frac{|\eta^i-x^{j}|}{\epsilon}}\Big)
\Big(\sum_{j=1}^ke^{-(2-\tau\theta)\frac{|y-x^{j}|}{\epsilon}}\Big)\Big]dy\\
&\leq C\|\varphi_{1}\|_*\int_{\R^3}G_1(\frac{y}{\epsilon},\frac{x}{\epsilon})
\Big[ke^{-\frac{\tau r}{\epsilon}}\Big(\sum_{i=1}^me^{-(2\theta-\tau)\frac{|y-\eta^{i}|}{\epsilon}}\Big)
+ke^{\frac{-\tau\theta r}{\epsilon}}
\Big(\sum_{j=1}^ke^{-(2-\tau\theta)\frac{|y-x^{j}|}{\epsilon}}\Big)\Big]dy\\
&\leq C
\|\varphi_1\|_*\Big(\sum_{j=1}^me^{-2\tau\theta\frac{|x-\eta^{j}|}{\epsilon}}
+\sum_{j=1}^ke^{-\frac{2\tau|x-x^{j}|}{\epsilon}}\Big),
\end{split}
\end{equation*}
where we use the fact that $|\eta^i-x^{j}|\sim r$ for $i=1,\cdots,m,j=1,\cdots,k$ and $r\sim klnk$.
Thus from the above two estimates, we find
\begin{align}\label{4.2.1}
\Big|\int_{\R^3}G_1(y,\frac{x}{\epsilon})S_{\epsilon,k}^2(\epsilon y)\varphi_{1}(\epsilon y)dy\Big|
\leq C
\|\varphi_1\|_*\Big(\sum_{j=1}^me^{-2\tau\theta\frac{|x-\eta^{j}|}{\epsilon}}
+\sum_{j=1}^ke^{-\frac{2\tau|x-x^{j}|}{\epsilon}}\Big).
\end{align}
With the same argument as above, it holds
\begin{align*}
&\Big|\int_{\R^3}G_1(y,\frac{x}{\epsilon})(2\beta  S_{\epsilon,k}(\epsilon y)T_{\epsilon,k}(\epsilon y)\varphi_{2}(\epsilon y)+
\beta T_{\epsilon,k}^2(\epsilon y)\varphi_{1}(\epsilon y))dy\Big|\\\leq
&C\|(\varphi_{1},\varphi_{2})\|_*\Big(\sum_{j=1}^me^{-2\tau\theta\frac{|x-\eta^{j}|}{\epsilon}}
+\sum_{j=1}^ke^{-\frac{2\tau|x-x^{j}|}{\epsilon}}\Big),
\end{align*}
and
\begin{align*}
&\Big|\int_{\R^3}G_2(y,\frac{x}{\epsilon})(3\mu_2T_{\epsilon,k}^2(\epsilon y)\varphi_{2}(\epsilon y)
+2\beta  S_{\epsilon,k}(\epsilon y)T_{\epsilon,k}(\epsilon y)\varphi_{1}(\epsilon y)+
\beta S_{\epsilon,k}^2(\epsilon y)\varphi_{2}(\epsilon y))dy\Big|\\
\leq
&C\|(\varphi_{1},\varphi_{2})\|_*\Big(\sum_{j=1}^me^{-2\tau\theta\frac{|x-\eta^{j}|}{\epsilon}}
+\sum_{j=1}^ke^{-\frac{2\tau|x-x^{j}|}{\epsilon}}\Big).
\end{align*}

Moreover, direct computations imply that
\begin{align*}
\Big|\int_{\R^3}G_1(y,\frac{x}{\epsilon})\sum_{j=1}^kW^2\big(y-\frac{x^j}{\epsilon}\big)
\frac{\partial U\big(y-\frac{x^j}{\epsilon}\big)}{\partial r}\Big|\leq
C\sum_{j=1}^ke^{-\frac{|x-x^{j}|}{\epsilon}},
\end{align*}
and
\begin{align*}
\Big|\int_{\R^3}G_2(y,\frac{x}{\epsilon})\sum_{j=1}^kW^2\big(y-\frac{x^j}{\epsilon}\big)
\frac{\partial V\big(y-\frac{x^j}{\epsilon}\big)}{\partial r}\Big|\leq
C\sum_{j=1}^ke^{-\frac{|x-x^{j}|}{\epsilon}}.
\end{align*}

On the other hand, from
\begin{align*}
b_{k_n}\int_{\R^3}\Big(\sum_{j=1}^kW_{\epsilon,x^{j}}^2Y_jY_1+\sum_{j=1}^kW_{\epsilon,x^{j}}^2Z_jZ_1\Big)=
\langle\mathbb L(\varphi_1,\varphi_2)-(f_1,f_2),(Y_1,Z_1)\rangle,
\end{align*}
we can prove that $b_{k_n}\rightarrow0$. Therefore, combining the above estimates we have
\begin{align}\label{o}
|\varphi_{1,k_n}|&\Big(\sum_{j=1}^me^{-\frac{\tau\theta|x-\eta^{j}|}{\epsilon}}
+\sum_{j=1}^ke^{-\frac{\tau|x-x^{j}|}{\epsilon}}\Big)^{-1}+|\varphi_{2,k_n}|\Big(\sum_{j=1}^me^{-\frac{\tau\theta|x-\eta^{j}|}{\epsilon}}
+\sum_{j=1}^ke^{-\frac{\tau|x-x^{j}|}{\epsilon}}\Big)^{-1}\nonumber\\
&\leq o(1)+C\|(\varphi_{1,k_n},\varphi_{2,k_n})\|_*\frac{\sum_{j=1}^me^{-2\tau\theta\frac{|x-\eta^{j}|}{\epsilon}}
+\sum_{j=1}^ke^{-\frac{2\tau|x-x^{j}|}{\epsilon}}}{\sum_{j=1}^me^{-\frac{\tau\theta|x-\eta^{j}|}{\epsilon}}
+\sum_{j=1}^ke^{-\frac{\tau|x-x^{j}|}{\epsilon}}}.
\end{align}
Since $(\varphi_{1,k_n},\varphi_{2,k_n})\in  E_{k_n}$,
applying the invertibility of the linear operator $\mathbb L$ (see Lemma \ref{lm2.2}), we can prove that
  $(\varphi_{1,k_n},\varphi_{2,k_n})\rightarrow(0,0)$ uniformly in $B_{R\epsilon}(x^{j})$ for any $R>0$.
This together with \eqref{o}, gives that
 $\|(\varphi_{1,k_n},\varphi_{2,k_n})\|_*\rightarrow0$. This is a contradiction.

\end{proof}
Now we consider the following nonlinear problem. For $(\varphi_1,\varphi_2)\in E_k$ and $b_k\in\R$, we have
\begin{equation}\label{eq2.3nonlinear}
\mathbb L(\varphi_1,\varphi_2)=l_k+\tilde{R}(\varphi_1,\varphi_2)+
b_k\Big(\sum_{j=1}^kW_{\epsilon,x^{j}}^2Y_{j},\sum_{j=1}^kW_{\epsilon,x^{j}}^2Z_{j}\Big),
\end{equation}
where $l_k=(l_{1,k},l_{2,k})$ with
\begin{align*}
&l_{1,k}=\sum_{j=1}^k(P(|x|)-1)U_{\epsilon,x^{j}}-\mu_1\Big(S_{\epsilon,k}^3-u_\epsilon^3-\sum_{j=1}^kU_{\epsilon,x^{j}}^3\Big)-
\beta\Big(S_{\epsilon,k}^2T_{\epsilon,k}-u_\epsilon^2v_\epsilon-\sum_{j=1}^kU_{\epsilon,x^{j}}V_{\epsilon,x^{j}}^2\Big),\\
&l_{2,k}=\sum_{j=1}^k(Q(|x|)-1)V_{\epsilon,x^{j}}-\mu_2\Big(T_{\epsilon,k}^3-v_\epsilon^3-\sum_{j=1}^kV_{\epsilon,x^{j}}^3\Big)-
\beta\Big(T_{\epsilon,k}^2S_{\epsilon,k}-v_\epsilon^2u_\epsilon-\sum_{j=1}^kV_{\epsilon,x^{j}}U_{\epsilon,x^{j}}^2\Big),
\end{align*}
and $\tilde{R}(\varphi_1,\varphi_2)=(\tilde{R}_{1}(\varphi_1,\varphi_2),\tilde{R}_{2}(\varphi_1,\varphi_2))$ with
\begin{align*}
&\tilde{R}_{1}(\varphi_1,\varphi_2)=\mu_1(\varphi_1^3+3S_{\epsilon,k}^3\varphi_1^2)+
\beta(\varphi_1\varphi_2^2+S_{\epsilon,k}\varphi_2^2+2T_{\epsilon,k}\varphi_1\varphi_2),
\end{align*}
and
\begin{align*}
&\tilde{R}_{2}(\varphi_1,\varphi_2)=\mu_2(\varphi_2^3+3T_{\epsilon,k}^3\varphi_2^2)+
\beta(\varphi_2\varphi_1^2+T_{\epsilon,k}\varphi_1^2+2S_{\epsilon,k}\varphi_1\varphi_2).\\
\end{align*}

Following Lemma 2.2 in \cite{GZ-2022}, we also have following Lemma.
\begin{lem}\label{leml}
There hold for $i=1,2$
\begin{align*}
&\|l_{i,k}\|_*\leq\frac C{r^{m_i}}+Ce^{-\frac{|x^{1}-x^{2}|}{\epsilon}}.
\end{align*}

\end{lem}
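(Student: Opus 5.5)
To prove Lemma \ref{leml}, I will estimate $l_{1,k}$ pointwise and divide by the weight
\[
\omega(x):=\sum_{j=1}^m e^{-\frac{\tau\theta|x-\eta^j|}{\epsilon}}+\sum_{j=1}^k e^{-\frac{\tau|x-x^j|}{\epsilon}} .
\]
The term $l_{2,k}$ is handled identically with $(Q,V,v_\epsilon,\mu_2)$ in place of $(P,U,u_\epsilon,\mu_1)$. By the rotational symmetry of $x^j$, $P$, $Q$ and $(u_\epsilon,v_\epsilon)$, it suffices to take $x\in\Omega_1:=\{x:\langle \frac{x'}{|x'|},\frac{x^1}{|x^1|}\rangle\ge\cos\frac{\pi}{k}\}$, where $x'=(x_1,x_2)$. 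In $\Omega_1$ we have $|x-x^j|\ge \frac12|x^j-x^1|$ for $j\ne1$. The bound $U,V\le Ce^{-|y|}$ and, from Lemma \ref{lemb.1}, $u_\epsilon,v_\epsilon\le C\sum_{i=1}^m e^{-\theta|x-\eta^i|/\epsilon}$ will be used throughout.

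\textbf{Potential term.} Split $\sum_j (P(|x|)-1)U_{\epsilon,x^j}$ according to whether $|x-x^1|\le \frac r2$ or not. If $|x-x^1|\le r/2$, then $|x|\ge r/2$, so $(H_2)$ gives $|P(|x|)-1|\le C r^{-m_1}$. Also $U_{\epsilon,x^1}\le Ce^{-|x-x^1|/\epsilon}\le Ce^{-\tau|x-x^1|/\epsilon}$. For $j\ne1$, I use $e^{-|x-x^j|/\epsilon}\le e^{-\tau|x-x^1|/\epsilon}e^{-(1-\tau)|x-x^j|/\epsilon}$ and sum the geometric-type series $\sum_{j\ne1}e^{-(1-\tau)|x^j-x^1|/(2\epsilon)}\le C$. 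This gives a contribution $\le Cr^{-m_1}\omega(x)$. If $|x-x^1|\ge r/2$, $P-1$ is only bounded, but $U_{\epsilon,x^j}\le e^{-\tau|x-x^j|/\epsilon}e^{-(1-\tau)r/(2\epsilon)}$. Since $e^{-(1-\tau)r/(2\epsilon)}\ll r^{-m_1}$ (recall $r\sim k\ln k$), this is again $\le Cr^{-m_1}\omega$.

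\textbf{Interaction terms.} Expanding $S_{\epsilon,k}^3-u_\epsilon^3-\sum_jU_{\epsilon,x^j}^3$ gives two kinds of terms. The first kind is $\sum_{i\ne j}U_{\epsilon,x^i}^2U_{\epsilon,x^j}$ together with the mixed products of bubbles. In $\Omega_1$ these are bounded by $e^{-|x-x^1|/\epsilon}e^{-|x-x^2|/\epsilon}$ up to summable tails. Using $|x-x^1|+|x-x^2|\ge|x^1-x^2|$, I bound $e^{-(1-\tau)(|x-x^1|+|x-x^2|)/\epsilon}\cdot e^{-\tau|x-x^1|/\epsilon}$ by $e^{-(1-\tau)|x^1-x^2|/\epsilon}\omega$. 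This yields $Ce^{-(1-\tau)|x^1-x^2|/\epsilon}$, i.e.\ the stated term up to the harmless loss of $\tau$ that is implicit in the paper's constants.

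The second kind are the cross terms $u_\epsilon^2U_{\epsilon,x^j}$ and $u_\epsilon U_{\epsilon,x^j}^2$. Since $|\eta^i-x^j|\ge r/2$, the product of the two decays is at most $e^{-\theta\min(1,\theta)r/(2\epsilon)}$ times the weight. This quantity is far smaller than $r^{-m_1}$. The $\beta$-terms $S_{\epsilon,k}^2T_{\epsilon,k}-u_\epsilon^2v_\epsilon-\sum_jU_{\epsilon,x^j}^2V_{\epsilon,x^j}$ are the same products with some $U$ replaced by $V$ and $u$ by $v$, so they obey identical bounds.

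\textbf{Main obstacle.} The delicate point is near the origin, where $\omega$ is dominated by the weaker weight $e^{-\tau\theta|x-\eta^i|/\epsilon}$. There the cross terms must be shown to be controlled by this weight times $e^{-cr/\epsilon}$, and this uses the choice $\tau\theta<\theta$. I would verify this by a triangle inequality splitting $|x-\eta^i|$ versus $|x-x^j|$ into the regions $|x-\eta^i|\le r/2$ and its complement, exactly as in the potential term. Combining all terms gives $\|l_{1,k}\|_*\le Cr^{-m_1}+Ce^{-|x^1-x^2|/\epsilon}$.
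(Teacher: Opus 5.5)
Your overall scheme is the standard one. The paper gives no argument of its own and defers to Lemma~2.2 of \cite{GZ-2022}, which works the same way: pointwise bounds in the sector $\Omega_1$, the potential term split at $|x-x^1|=r/2$, and $u_\epsilon$--bubble cross terms of size $e^{-cr/\epsilon}\ll r^{-m_1}$. You also rightly read the $\beta$-term with matching cancellation. As printed, $S_{\epsilon,k}^2T_{\epsilon,k}-\sum_jU_{\epsilon,x^j}V_{\epsilon,x^j}^2$ is of order one near $x^j$ unless $\gamma_1=\gamma_2$.

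The genuine gap is the bubble--bubble interaction. You obtain $Ce^{-(1-\tau)|x^1-x^2|/\epsilon}$ and call the loss harmless, but it cannot be absorbed into $C$. Indeed $|x^1-x^2|=2r\sin\frac{\pi}{k}\sim\frac{2\pi r}{k}\sim m\ln k\to\infty$, so the ratio $e^{\tau|x^1-x^2|/\epsilon}$ grows like a power of $k$. What you prove is therefore strictly weaker than the lemma.

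The loss comes from keeping only two of the three exponential factors in the cubic products; the third factor is exactly what pays for the weight. Write $s_j=|x-x^j|$, so $s_1\le s_j$ in $\Omega_1$, and take a product $U_{\epsilon,x^a}U_{\epsilon,x^b}U_{\epsilon,x^c}$ with indices not all equal.
If, say, $a=1\neq b$, use $s_1+s_b\ge|x^1-x^b|$ and pay the weight with $s_c-\tau s_1\ge(1-\tau)s_c$.
If $1\notin\{a,b,c\}$, pay the weight with $s_a-\tau s_1\ge(1-\tau)s_a$ and use $s_b+s_c\ge\frac12(|x^1-x^b|+|x^1-x^c|)$.
For instance,
\[
\frac{U_{\epsilon,x^1}^2U_{\epsilon,x^j}}{e^{-\tau s_1/\epsilon}}\le Ce^{-\frac{s_1+s_j}{\epsilon}}e^{-\frac{(1-\tau)s_1}{\epsilon}}\le Ce^{-\frac{|x^1-x^j|}{\epsilon}},\qquad
\frac{U_{\epsilon,x^1}U_{\epsilon,x^j}^2}{e^{-\tau s_1/\epsilon}}\le Ce^{-\frac{s_1+s_j}{\epsilon}}e^{-\frac{s_j-\tau s_1}{\epsilon}}\le Ce^{-\frac{|x^1-x^j|}{\epsilon}}.
\]
Set $q=\min\{b-1,k+1-b\}$. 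Then $|x^1-x^b|=2r\sin\frac{\pi q}{k}\ge|x^1-x^2|+c(q-1)\frac rk$, and $\frac rk\to\infty$. Hence $\sum_{b\ne1}e^{-|x^1-x^b|/\epsilon}\le Ce^{-|x^1-x^2|/\epsilon}$ and $\big(\sum_{b\ne1}e^{-|x^1-x^b|/(2\epsilon)}\big)^2\le Ce^{-|x^1-x^2|/\epsilon}$. Also $\sum_c e^{-(1-\tau)s_c/\epsilon}\le C$ in $\Omega_1$. Together these give the sharp exponent $1$, and the $\beta$-products are handled identically.

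Two smaller remarks. First, the region near the origin is not a real obstacle and needs no splitting. Every cross term carries a bubble factor, and $2\theta|x-\eta^i|+(1-\tau)|x-x^j|\ge\min\{2\theta,1-\tau\}|\eta^i-x^j|$. This already gives $u_\epsilon^2U_{\epsilon,x^j}\le Ce^{-cr/\epsilon}e^{-\tau|x-x^j|/\epsilon}$ for all $x$, and similarly for $u_\epsilon U_{\epsilon,x^j}^2$. So only the bubble part of the weight is ever used. Second, you do not need any rotational symmetry of $u_\epsilon$. Only the bound of Lemma~\ref{lemb.1} and $\eta^i\in B_\delta(0)$ enter, so you can work in the sector of whichever $x^j$ is closest to $x$.
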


Applying Lemma \ref{lemlinear}, we use the contraction mapping
theorem to prove that there exists an integer $k_0>0$, such that for
any integer $k\geq k_0$, \eqref{eq2.3nonlinear} has a solution
$(\varphi_1,\varphi_2)\in E_k$ satisfying
\begin{align}\label{varphinorm}
&\|(\varphi_1,\varphi_2)\|_*\leq C\|l_{k}\|_*\leq\frac C{r^{m_1}}+\frac C{r^{m_2}}+Ce^{-\frac{|x^{1}-x^{2}|}{\epsilon}}.
\end{align}

Combining Lemma \ref{lemlinear} and \eqref{varphinorm}, from Proposition \ref{propB.1}, we have

\begin{eqnarray*}
I(S_{\epsilon,k}+\varphi_1,T_{\epsilon,k}+\varphi_2)
&=&
I(S_{\epsilon,k},T_{\epsilon,k})+O(\|l_k\|_*\|(\varphi_1,\varphi_2)\|_*+\|(\varphi_1,\varphi_2)\|_*^2)\nonumber\\
&=&
k\frac{\mu_1+\mu_2-2\beta}{4(\mu_1\mu_2-\beta^2)}\ds\int_{\R^3}W_\epsilon^4dx
+k\Big(\frac{a_1\gamma_1^2}{r^{m_1}}+\frac{a_2\gamma_2^2}{r^{m_2}}\Big)\frac{1}{2}\int_{\R^3}W_\epsilon^2dx\\
&&-kC_\beta W(\frac{|x^1-x^2|}{\epsilon})+O\Big(\frac1{k^{m_1-1+\sigma}}+\frac1{k^{m_2-1+\sigma}}\Big),
\end{eqnarray*}
where $\sigma>0$ is a small constant. Therefore we can prove that problem \eqref{eq0} has a solution $(u_{\epsilon,k},v_{\epsilon,k})$ of the form
\begin{eqnarray*}
u_{\epsilon,k}=S_{\epsilon,k}(x)+\varphi_{k}(x), ~~~v_{\epsilon,k}=T_{\epsilon,k}(x)+\psi_{k}(x)
\end{eqnarray*}
with $(\varphi_k,\psi_k)\in E_k$ and as $k\rightarrow+\infty$,
\begin{align}\label{5.5}
\|(\varphi_k,\psi_k)\|_*\leq\frac C{r^{m_1}}+\frac C{r^{m_2}}+Ce^{-\frac{|x^{1}-x^{2}|}{\epsilon}}.
\end{align}

Let $(\xi_1,\xi_2)$ satisfies
\begin{equation*}\label{eq5.6}
\begin{cases}
-\epsilon^{2}\Delta \xi_1+P(|x|) \xi_{1}=3\mu_1 u_{\epsilon,k}^2\xi_{1}  +\beta \xi_{1}v_{\epsilon,k}^2 +2\beta u_{\epsilon,k} v_{\epsilon,k}\xi_2,&~\text{in}\;\mathbb R^3,\\[3mm]
-\epsilon^{2}\Delta \xi_{2}+Q(|x|) \xi_{2}= 3\mu_2 v_{\epsilon,k}^2\xi_2 +\beta u_{\epsilon,k}^2\xi_2+2\beta u_{\epsilon,k} v_{\epsilon,k}\xi_1,&~\text{in}\;\mathbb R^3.
\end{cases}
\end{equation*}
Now we want to prove Theorem \ref{th1.3} by contradiction. Suppose that there exist
$k_n\rightarrow+\infty$ and $(\xi_{1,n},\xi_{2,n})\in
(H_{\epsilon,P}\cap H^1(\R^3))\times (H_{\epsilon,Q}\cap H^1(\R^3))$ satisfying $ \|(\xi_{1,n},\xi_{2,n})\|_*=1$ and
$L_{k_n}(\xi_{1,n},\xi_{2,n})=0$.

Let
$$\tilde\xi_{i,n}(y)=\xi_{i,n}(\epsilon y+x^{1}_{k_n}).$$ We need the
following lemmas to show a contradiction.

\begin{lem}\label{lembi}
It holds that
$$(\tilde\xi_{1,n},\tilde\xi_{2,n})\rightarrow b\big(\frac{\partial U}{\partial y_1},\frac{\partial V}{\partial y_1})$$
uniformly in $C^1(B_R(0))$ for any $R>0$ with some constant $b$.
\end{lem}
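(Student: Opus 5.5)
The argument is a local blow-up near $x^1_{k_n}$, followed by identification of the limit through the nondegeneracy of the limiting system (in the symmetry class) and a Liouville-type step. The normalization $\|(\xi_{1,n},\xi_{2,n})\|_*=1$ gives the pointwise bound
\[
|\xi_{i,n}(x)|\le \sum_{j=1}^me^{-\frac{\tau\theta|x-\eta^{j}|}{\epsilon}}+\sum_{j=1}^ke^{-\frac{\tau|x-x^{j}|}{\epsilon}} .
\]
Near $x^1$ the first sum is $O(e^{-\tau\theta r/\epsilon})$. The second sum is bounded uniformly, because $|x^j-x^1|\ge C\ln k$ grows and $\sum_j e^{-\tau|x^j-x^1|/\epsilon}$ is summable. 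Hence $\tilde\xi_{i,n}$ is uniformly bounded in $L^\infty(\R^3)$.

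In the rescaled variable $y$, $(\tilde\xi_{1,n},\tilde\xi_{2,n})$ solves
\[
-\Delta\tilde\xi_{1,n}+P(\epsilon y+x^1)\tilde\xi_{1,n}=3\mu_1\tilde u^2\tilde\xi_{1,n}+\beta\tilde v^2\tilde\xi_{1,n}+2\beta\tilde u\tilde v\tilde\xi_{2,n},
\]
together with the analogous second equation, where $\tilde u(y)=u_{\epsilon,k}(\epsilon y+x^1)$ and $\tilde v(y)=v_{\epsilon,k}(\epsilon y+x^1)$. The coefficients are uniformly bounded, so local elliptic $L^p$ and Schauder estimates give a uniform $C^{1,\alpha}(B_{R+1}(0))$ bound. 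By Arzel\`a--Ascoli, a subsequence converges in $C^1(B_R(0))$ for every $R$ to some $(\xi_1,\xi_2)$, bounded on $\R^3$.

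Next I identify the limit equation. On $B_R(0)$ we have $P(\epsilon y+x^1)\to1$ and $Q(\epsilon y+x^1)\to1$, since $|x^1|=r\sim k\ln k\to\infty$ and $(H_2)$ holds. Also $\tilde u\to U$ and $\tilde v\to V$ locally uniformly, for three reasons:
\begin{itemize}
\item the other bumps $U_{\epsilon,x^j}$, $j\ne1$, are exponentially small there, with total contribution $O(ke^{-C\ln k/\epsilon})\to0$;
\item $u_\epsilon,v_\epsilon$ decay exponentially away from $y_0$;
\item $\|(\varphi_k,\psi_k)\|_*\to0$ by \eqref{5.5}.
\end{itemize}
Therefore $(\xi_1,\xi_2)$ is a bounded solution of the linearized system \eqref{eqxi} around $(U,V)$ (in the unscaled variable $y$). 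It inherits evenness in $y_2,y_3$ from $H_{\epsilon,P}\times H_{\epsilon,Q}$, because $x^1$ lies on the $x_1$-axis and that evenness survives translation along $x_1$.

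The main obstacle is the classification of the limit. The kernel of the linearized operator at $(U,V)=(\gamma_1W,\gamma_2W)$ must consist only of translations $\big(\frac{\partial U}{\partial y_l},\frac{\partial V}{\partial y_l}\big)$, which is the nondegeneracy already used in Step 1 of Proposition \ref{lema.1} and in Lemma \ref{lm2.2}. This is exactly where the restriction $(A_\beta)$ on $\beta$ enters. A bounded solution, rather than an $H^1$ one, could a priori be an issue. I would handle it by noting that outside a large ball the equation is a small perturbation of $-\Delta+1$, so a comparison argument gives exponential decay of bounded solutions and places them in $H^1$. Finally, the symmetry removes the $y_2$ and $y_3$ derivatives, since those are odd in $y_2$ and $y_3$ respectively, and leaves $(\xi_1,\xi_2)=b\big(\frac{\partial U}{\partial y_1},\frac{\partial V}{\partial y_1}\big)$ for some constant $b$. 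This is the claim.
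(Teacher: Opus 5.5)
Your proposal is correct and takes the same route as the paper: a uniform bound from $\|(\xi_{1,n},\xi_{2,n})\|_*=1$, local compactness, passage to the linearized system at $(U,V)$, nondegeneracy of $(U,V)$ under $(A_\beta)$, and evenness in $y_2,y_3$ to rule out the $y_2,y_3$ translations. You also fill in steps the paper skips ($C^{1,\alpha}$ bounds giving the $C^1$ convergence, and decay of the limit so that nondegeneracy applies); the decay is simpler than your comparison argument, because letting $n\to\infty$ in the weighted pointwise bound gives $|\tilde\xi_i(y)|\le e^{-\tau|y|}$ directly.
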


\begin{proof}
By assumption we get $|\tilde\xi_{i,n}|\leq C$, and we may assume that $\tilde\xi_{i,n}\rightarrow\tilde{\xi}_i$ in $C_{loc}(\R^3)$. Then $(\tilde{\xi}_1,\tilde{\xi}_2)$
satisfies that
\begin{align}\label{eqxi2}
\begin{cases}
&\displaystyle-\Delta\tilde{\xi}_1+\tilde{\xi}_1=3\mu_1U^2\tilde{\xi}_1+\beta V^2\tilde{\xi}_1
+2\beta UV\tilde{\xi}_2,\,\,\,\,x\in\R^{3}\\[1mm]
&\displaystyle-\Delta\tilde{\xi}_2+\tilde{\xi}_2=3\mu_2V^2\tilde{\xi}_2
+\beta U^2\tilde{\xi}_2+2\beta UV\tilde{\xi}_1,\,\,\,\,\,x\in\R^{3},
\end{cases}
\end{align}
which implies that
$$(\tilde{\xi}_1,\tilde{\xi}_2)=\sum_{l=1}^3b^l\big(\frac{\partial U}{\partial y_l},\frac{\partial V}{\partial y_l}).$$
Since $\tilde{\xi}_i$ is even in $y_l,l=2,3$, it holds that $b^2=b^3=0$. This completes the proof with $b=b^1$.

\end{proof}

We decompose that
$$\xi_{i,n}(y)=b_n\sum_{j=1}^k\frac{\partial Q_{i,\epsilon,x^{j}}}{\partial r}+\xi_{i,n}^*,\ \ i=1,2,$$
where $Q_{1,\epsilon,x^{j}}=U\big(\frac{x-x^j}{\epsilon}\big), Q_{2,\epsilon,x^{j}}=V\big(\frac{x-x^j}{\epsilon}\big)$
 and $(\xi_{1,n}^*,\xi_{2,n}^*)\in E_k$. It follows from
Lemma \ref{lembi} that $b_n$ is bounded. Moreover, arguing as Lemma 3.3 in \cite{GZ-2022}, we have
\begin{align}\label{eqxi3}
\|(\xi_{1,n}^*,\xi_{2,n}^*)\|_*\leq\frac C{r^{m_1}}+\frac C{r^{m_2}}+Ce^{\frac{-|x^{1}-x^{2}|}{\epsilon}}.
\end{align}

\begin{lem}\label{lemxitilde}
There holds that
\begin{align*}
\tilde\xi_{i,n}\rightarrow0,\ i=1,2
\end{align*}
uniformly in $C^1(B_R(0))$ for any $R>0$.
\end{lem}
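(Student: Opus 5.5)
By Lemma \ref{lembi}, it suffices to show that the constant $b$ there vanishes, equivalently that the bounded coefficients $b_n$ in the decomposition $\xi_{i,n}=b_n\sum_j \partial_r Q_{i,\epsilon,x^j}+\xi^*_{i,n}$ tend to $0$. Uniform $C^1$ convergence on $B_R(0)$ will then follow from the $C^1_{loc}$ compactness already used in Lemma \ref{lembi} (elliptic estimates with $|\tilde\xi_{i,n}|\le C$). To get $b=0$ I will use a local Pohozaev identity of the type in Lemma \ref{lemlpoh1}, now for $(u_{\epsilon,k},v_{\epsilon,k})$ and $(\xi_{1,n},\xi_{2,n})$. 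I take the radial direction $\partial/\partial r$ at $x^1$, i.e.\ $\partial_{y_1}$ after rotation, and the domain $\Omega=B_{d}(x^1)$ with $d=\frac12|x^1-x^2|\sim \pi r/k$. This ball contains no other peaks and lies at distance $\sim r$ from the $\eta^j$.

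The right-hand side is
\begin{align*}
\int_{B_d(x^1)}\Big(\frac{\partial P}{\partial r}u_{\epsilon,k}\xi_{1,n}+\frac{\partial Q}{\partial r}v_{\epsilon,k}\xi_{2,n}\Big).
\end{align*}
I will expand it using $(H_2')$: $P'(|x|)=-a_1m_1|x|^{-m_1-1}+\dots$, together with Taylor expansion of $|x|^{-m_1-1}$ around $r$, and insert $u_{\epsilon,k}=U_{\epsilon,x^1}+\dots$ and $\xi_{1,n}=b_n\partial_rU_{\epsilon,x^1}+\dots$. The zero-order term $r^{-m-1}\int U\,\partial_{y_1}U$ vanishes by oddness. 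The leading contribution is therefore
\[
b_n\,\epsilon^{3}\Big(\frac{a_1m_1(m_1+1)\gamma_1^2}{r^{m_1+2}}+\frac{a_2m_2(m_2+1)\gamma_2^2}{r^{m_2+2}}\Big)\int y_1W\partial_{y_1}W,
\]
which is of exact size $b_n r^{-m-2}$, nonzero by $(H_2)$ and the sign condition on $a_i$. The errors come from $\xi^*_{i,n}$ and $(\varphi_k,\psi_k)$, bounded by \eqref{eqxi3} and \eqref{5.5} in $\|\cdot\|_*$. Because $\int U\partial_rU$-type terms cancel, these errors contribute $O(r^{-m-1}(r^{-m}+e^{-|x^1-x^2|/\epsilon}))$, and with $r\sim\frac{m}{2\pi}k\ln k$ this is $o(r^{-m-2})$.

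The boundary terms on $\partial B_d(x^1)$ are controlled by the weighted $*$-bounds. There every function is $O(e^{-\tau d/\epsilon})$, so each boundary integral is $O(e^{-2\tau d/\epsilon}d^2)$. I would like this to be $o(r^{-m-2})$, but with $d\sim \pi r/k\sim \frac m2\ln k$ we only have $e^{-\tau d/\epsilon}\sim k^{-\tau m/(2\epsilon)}$, so smallness needs $\tau/\epsilon$ large. \textbf{This is the main obstacle.} The term $\xi^*$ interacting with $u_{\epsilon,k}$ on the boundary must be treated with the sharper decay rate $e^{-|x-x^j|/\epsilon}$ of $U_{\epsilon,x^1}$ itself, not with the weight $\tau$. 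Along the lines of the Fermat-point observation mentioned in the introduction, I would first apply the Pohozaev identity to the approximate kernel $\sum_j\partial_r(U_{\epsilon,x^j},V_{\epsilon,x^j})$ and subtract, so that the leading boundary contributions, which involve the neighbouring peaks $x^2,x^k$ and have size comparable to $k^{-1}e^{-|x^1-x^2|/\epsilon}$, cancel exactly by the symmetry of $\partial B_d(x^1)$ between $x^2$ and $x^k$. The remaining boundary terms are then $O(e^{-(1+\sigma)|x^1-x^2|/\epsilon})=o(r^{-m-2})$, using $e^{-|x^1-x^2|/\epsilon}\sim r^{-m}$ from the choice of $r$. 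If this still fails I would enlarge the domain to the whole sector $\Omega_1$, where symmetry kills boundary terms on the sector sides.

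Dividing the identity by $r^{-m-2}$ gives $b_n\cdot c_0=o(1)$ with $c_0\neq0$, hence $b_n\to0$ and so $b=0$. Combined with Lemma \ref{lembi}, this gives $\tilde\xi_{i,n}\to0$ in $C^1(B_R(0))$.
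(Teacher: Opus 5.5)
\textbf{Gap: the boundary terms are the leading order, and they do not cancel.} You try to make the boundary terms negligible, but they carry the leading-order information.

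In the radial ($y_1$) Pohozaev identity on $\Omega=B_{\frac12|x^1-x^2|}(x^1)$, the reflection $y_2\mapsto -y_2$ exchanges $x^2$ and $x^k$. It leaves $\nu_1$ and $\partial_{y_1}$ unchanged, so the contributions of the two neighbours \emph{add}; this symmetry only trivializes the $y_2$-identity. Your sector fallback fails for the same reason. On the sides of $\Omega_1$ the normal derivatives vanish, but $\langle\nabla u_{\epsilon,k},\nabla\xi_1\rangle\nu_1$, $u_{\epsilon,k}\xi_1\nu_1$, etc.\ survive with $\nu_1=-\sin\frac{\pi}{k}\neq 0$. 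That is exactly where the neighbour interaction sits.

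The paper evaluates this term as
$\mathcal L\big((\sum_iQ_{1,\epsilon,x^i},\sum_iQ_{2,\epsilon,x^i}),(\sum_j\partial_rQ_{1,\epsilon,x^j},\sum_j\partial_rQ_{2,\epsilon,x^j}),\Omega\big)=(C+o(1))k^{-2}W(|x^1-x^2|/\epsilon)$. Heuristically, this is the $r$-derivative of the radial force exerted by the neighbours, and it dominates. The balance $r^{-m-1}\sim k^{-1}W(|x^1-x^2|/\epsilon)$ that fixes $r$ gives $k^{-2}W(|x^1-x^2|/\epsilon)\sim r^{-m-1}/k$. This exceeds your curvature term $b\,r^{-m-2}$ by a factor $r/k\sim\ln k$. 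So dividing by $r^{-m-2}$ does not give $b_nc_0=o(1)$; it gives $b_n\big(c_0+c_1\frac{r}{k}(1+o(1))\big)=o(1)$. Your claim that the remaining boundary terms are $O(e^{-(1+\sigma)|x^1-x^2|/\epsilon})=o(r^{-m-2})$ is therefore false.

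\textbf{What the paper does instead.} The non-degeneracy in the radial direction comes from the linearized neighbour interaction, not from the second derivative of $P$ and $Q$. Following Lemma 3.4 of \cite{GZ-2022}, the paper inserts $u_{\epsilon,k}=u_\epsilon+\sum_jQ_{1,\epsilon,x^j}+\varphi_k$ and $\xi_i=b\sum_j\partial_rQ_{i,\epsilon,x^j}+\xi_i^*$ into the bilinear boundary form. It then shows that the following are all $o\big(k^{-2}W(|x^1-x^2|/\epsilon)\big)$; this is where $m>2$ enters:
\begin{itemize}
\item the cross terms;
\item the boundary terms with $P-1$ and $Q-1$;
\item the cubic boundary terms;
\item the right-hand side $\int_\Omega(\partial_{y_1}P\,u_{\epsilon,k}\xi_1+\partial_{y_1}Q\,v_{\epsilon,k}\xi_2)$, i.e.\ your $b\,r^{-m-2}$ plus errors $O(r^{-m-1}(r^{-m}+e^{-|x^1-x^2|/\epsilon}))$.
\end{itemize}
This leaves $b(C'+o(1))=o(1)$ with $C'\neq0$, so $b=0$.

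Your worry that the weight $e^{-\tau|x-x^j|/\epsilon}$ alone gives too little decay on $\partial\Omega$ for the cross terms with $\xi^*$ and $\varphi_k$ is legitimate. It has to be resolved by sharper pointwise decay estimates for $\xi^*$ and $\varphi_k$ near $\partial\Omega$, which the paper imports from \cite{GZ-2022}. It cannot be resolved by cancelling the main interaction term, because that cancellation does not occur.
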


\begin{proof}
We apply  Lemma \ref{lemlpoh1} to $(u_{\epsilon,k},v_{\epsilon,k})$ and  $(\xi_1,\xi_2)$ with
$\Omega=B_{\frac12|x^{1}-x^{2}|}(x^{1})$. Here we drop the subscript $n$ for simipicity.
Define the following bilinear form
\begin{equation*}
 \begin{array}{rl}
\begin{split}
\mathcal L(\vec{u}_k,\vec\xi,\Omega)=&
-\epsilon^2\int_{\partial\Omega}(\frac{\partial
u_{\epsilon,k}}{\partial\nu}\frac{\partial \xi_1}{\partial y_i}+\frac{\partial
\xi_1}{\partial\nu}\frac{\partial u_{\epsilon,k}}{\partial y_i}
+\frac{\partial v_{\epsilon,k}}{\partial\nu}\frac{\partial \xi_2}{\partial y_i}+\frac{\partial \xi_2}{\partial\nu}\frac{\partial v_{\epsilon,k}}{\partial y_i})\\
&+\epsilon^2\int_{\partial\Omega}\langle\nabla
u_{\epsilon,k},\nabla\xi_1\rangle\nu_i+\langle\nabla
v_{\epsilon,k},\nabla\xi_2\rangle\nu_i
 +\int_{\partial\Omega}(u_{\epsilon,k}\xi_1+v_{\epsilon,k}\xi_2)\nu_i,
 \end{split}
 \end{array}
\end{equation*}
where $\vec u_k=(u_{\epsilon,k},v_{\epsilon,k}),\vec\xi=(\xi_1,\xi_2)$, and $\nu$ is the outward unit normal of $\partial \Omega$.
Then Lemma \ref{lemlpoh1} tells that
\begin{align}\label{0}
\begin{split}
&\mathcal L(\vec{u}_k,\vec\xi,\Omega)+\int_{\partial\Omega}((P(|x|)-1)u_{\epsilon,k}\xi_1+(Q(|x|)-1)v_{\epsilon,k}\xi_2)\nu_i\\
&-\int_{\partial\Omega}
(\mu_1 u_{\epsilon,k}^3\xi_1 +\mu_2 v_{\epsilon,k}^3\xi_2 +\beta u_{\epsilon,k} \xi_1v_{\epsilon,k}^2 +\beta \xi_2v_{\epsilon,k}u_{\epsilon,k}^2)\nu_i
=\int_\Omega\Big(\frac{\partial{P}}{\partial y_i}u_{\epsilon,k}\xi_1+\frac{\partial{Q}}{\partial y_i}v_{\epsilon,k}\xi_2\Big).
\end{split}
\end{align}
Note that we can rewrite $\mathcal L(\vec{u_k},\vec\xi,\Omega)$ as
\begin{align*}
 &\mathcal L(\vec{u}_k,\vec\xi,\Omega)\\
 &=\int_\Omega(-\epsilon^2\Delta u_{\epsilon,k}+u_{\epsilon,k}-\mu_1u_{\epsilon,k}^3-\beta u_{\epsilon,k}v_{\epsilon,k}^2)\frac{\partial \xi_1}{\partial y_1}
+\int_\Omega(-\epsilon^2\Delta v_{\epsilon,k}+v_{\epsilon,k}-\mu_2v_{\epsilon,k}^3-\beta v_{\epsilon,k}u_{\epsilon,k}^2)\frac{\partial \xi_2}{\partial y_1}\nonumber\\
&\qquad+ \int_\Omega(-\epsilon^2\Delta \xi_1+\xi_1-3\mu_1u_{\epsilon,k}^2\xi_1-2\beta u_{\epsilon,k}v_{\epsilon,k}\xi_2-\beta v_{\epsilon,k}^2\xi_1)\frac{\partial u_{\epsilon,k}}{\partial y_1}\\
&\qquad+\int_\Omega(-\epsilon^2\Delta \xi_2+\xi_2-3\mu_2v_{\epsilon,k}^2\xi_2-2\beta u_{\epsilon,k}v_{\epsilon,k}\xi_1-\beta u_{\epsilon,k}^2\xi_2)\frac{\partial v_{\epsilon,k}}{\partial y_1}\nonumber\\
&\qquad+\int_{\partial\Omega}(\mu_1u_{\epsilon,k}^3\xi_1+\mu_2v_{\epsilon,k}^3\xi_2+\beta u_{\epsilon,k}\xi_1v_{\epsilon,k}^2+\beta v_{\epsilon,k}\xi_2u_{\epsilon,k}^2)\nu_1.
\end{align*}
Recall that
\begin{align*}
u_{\epsilon,k}=u_\epsilon+\sum_{j=1}^kQ_{1,\epsilon,x^{j}}+\varphi_k,\ \ v_{\epsilon,k}=v_\epsilon+\sum_{j=1}^kQ_{2,\epsilon,x^{j}}+\psi_k,
\end{align*}
and
\begin{align*}
\xi_{i}(y)=b\sum_{j=1}^k\frac{\partial
Q_{i,\epsilon,x^{j}}}{\partial r}+\xi_{i}^*,\ \ i=1,2.
\end{align*}

Then by Lemma \ref{lemb.1} and the equations satisfied by $(Q_{1,\epsilon,x^{j}},Q_{2,\epsilon,x^{j}})$, it holds
\begin{align}\label{5.9}
\mathcal L\Big((u_\epsilon,v_\epsilon),\big(\sum_{j=1}^k\frac{\partial
Q_{1,\epsilon,x^{j}}}{\partial r},\sum_{j=1}^k\frac{\partial
Q_{2,\epsilon,x^{j}}}{\partial r}\big),\Omega\Big)
=O\big(e^{-(1-\sigma)\frac{|x^{1}-x^{2}|}{\epsilon}}\big),
\end{align}
where we use the fact that $|\eta^i-x^{j}|\sim r$ for $i=1,\cdots,m,j=1,\cdots,k$ and $r\sim klnk$.

In view of \eqref{varphinorm} and following the arguments in \cite{GZ-2022}, we have for $y\in \partial \Omega$,
$$\|(\nabla\varphi_k,\nabla\psi_k)\|_*,\|(\nabla\xi_1^*,\nabla\xi_2^*)\|_*\leq\frac C{r^{m_1}}+\frac C{r^{m_2}}+Ce^{-\frac{|x^{1}-x^{2}|}{\epsilon}}.$$
Therefore, we can check that
\begin{align}\label{2}
\begin{split}
\mathcal L((u_\epsilon,v_\epsilon),(\xi_1^*,\xi_2^*),\Omega)
=O\big(\frac C{r^{m_1}}+\frac C{r^{m_2}}+Ce^{-\frac{|x^{1}-x^{2}|}{\epsilon}}\big)e^{-(1-\sigma)\frac{|x^{1}-x^{2}|}{\epsilon}},
\end{split}
\end{align}
\begin{align}\label{3}
\begin{split}
\mathcal L((\varphi_k,\psi_k),(\xi_1^*,\xi_2^*),\Omega)
=O\big(\frac C{r^{m_1}}+\frac C{r^{m_2}}+Ce^{-\frac{|x^{1}-x^{2}|}{\epsilon}}\big)e^{-(1-\sigma)\frac{|x^{1}-x^{2}|}{\epsilon}},
\end{split}
\end{align}
\begin{align}\label{4}
\begin{split}
\mathcal L((\varphi_k,\psi_k),\big(\sum_{j=1}^k\frac{\partial
Q_{1,\epsilon,x^{j}}}{\partial r},\sum_{j=1}^k\frac{\partial
Q_{2,\epsilon,x^{j}}}{\partial r}\big),\Omega)
=O\big(\frac C{r^{m_1}}+\frac C{r^{m_2}}+Ce^{-\frac{|x^{1}-x^{2}|}{\epsilon}}\big)e^{-(1-\sigma)\frac{|x^{1}-x^{2}|}{\epsilon}},
\end{split}
\end{align}
and
\begin{align}\label{5}
\begin{split}
\mathcal L((\sum_{j=1}^kQ_{1,\epsilon,x^{j}},\sum_{j=1}^kQ_{2,\epsilon,x^{j}}),(\xi_1^*,\xi_2^*),\Omega)
=O\big(\frac C{r^{m_1}}+\frac C{r^{m_2}}+Ce^{-\frac{|x^{1}-x^{2}|}{\epsilon}}\big)e^{-(1-\sigma)\frac{|x^{1}-x^{2}|}{\epsilon}}.
\end{split}
\end{align}

On the other hand, form \cite{GZ-2022}, it holds
\begin{align}\label{1}
\begin{split}
& \mathcal L\Big(\big(\sum_{i=1}^kQ_{1,\epsilon,x^{i}},\sum_{i=1}^kQ_{2,\epsilon,x^{i}}\big),\big(\sum_{j=1}^k\frac{\partial Q_{1,\epsilon,x^{j}}}{\partial r},\sum_{j=1}^k\frac{\partial Q_{2,\epsilon,x^{j}}}{\partial r}\big),\Omega\Big)\\
&=(C+o(1))\frac{W\big(\frac{|x^1-x^2|}{\epsilon}\big)}{k^2}+
O\big(e^{-(2-\sigma)\frac{|x^{1}-x^{2}|}{\epsilon}}\big).
\end{split}
\end{align}

Finally, from the conditions imposed on $P(y),Q(y)$, we find that
\begin{align}\label{5.15}
\int_{\partial\Omega}(P(y)-1)u_{\epsilon,k}\xi_1\nu_1=O(\frac1{r^{m_i}}e^{-(1-\sigma)\frac{|x^{1}-x^{2}|}{\epsilon}}),
\end{align}
\begin{align}\label{5.15.1}
\int_{\partial\Omega}(Q(y)-1)v_{\epsilon,k}\xi_2\nu_1=O(\frac1{r^{m_i}}e^{-(1-\sigma)\frac{|x^{1}-x^{2}|}{\epsilon}}),
\end{align}
and
\begin{align}\label{5.16}
&\int_{\partial\Omega}(\mu_1u_{\epsilon,k}^3\xi_1+\mu_2v_{\epsilon,k}^3\xi_2+\beta u_{\epsilon,k}\xi_1v_{\epsilon,k}^2+\beta v_{\epsilon,k}\xi_2u_{\epsilon,k}^2)\nu_1
=O( e^{-(2-\sigma)\frac{|x^{1}-x^{2}|}{\epsilon}}).
\end{align}

Combining \eqref{0}, \eqref{5.9}-\eqref{5.16}, we get
\begin{align}\label{b1}
&\int_\Omega(\frac{\partial{P}}{\partial y_i}u_{\epsilon,k}\xi_1+\frac{\partial{Q}}{\partial y_i}v_{\epsilon,k}\xi_2)
\\&
=b(C'+o(1))\frac{W\big(\frac{|x^1-x^2|}{\epsilon}\big)}{k^2}+O\Big(\big(\frac1{r^{m_1}}+
 \frac1{r^{m_2}}\big)e^{-(1-\sigma)\frac{|x^{1}-x^{2}|}{\epsilon}}+e^{-(2-\sigma)\frac{|x^{1}-
 x^{2}|}{\epsilon}}\Big).
\nonumber\end{align}
Moreover, using \eqref{varphinorm} and Lemma \ref{lembi}, we have
\begin{equation}\label{5.19}
   \begin{split}
 &\int_\Omega(\frac{\partial{P}}{\partial y_i}u_{\epsilon,k}\xi_1+\frac{\partial{Q}}{\partial y_i}v_{\epsilon,k}\xi_2)
\\&=
b\epsilon\int_{ \mathbb R^3}\Big( Q_{1,\epsilon,x^{1}}  \frac{\partial Q_{1,\epsilon,x^{1}}}{\partial y_1} P'(|y|) |y|^{-1} y_1
 +Q_{2,\epsilon,x^{1}}  \frac{\partial Q_{2,\epsilon,x^{1}}}{\partial y^1} Q'(|y|) |y|^{-1} y_1\Big)\\&\quad
  +\frac1{|x^{1}|^{m+1}} O\Bigl(\frac{1}{ |x^{1}|^m}+
   e^{- \frac{|x^{2}-x^{1}|}{\epsilon}}\Bigr),
 \end{split}
 \end{equation}
where $m=\min\{m_1,m_2\}$. Then using \eqref{b1} and \eqref{5.19}, proceeding as the proof of Lemma 3.4 in \cite{GZ-2022}, we can prove $b=0$,
which finishes our proof.
\end{proof}

Finally, we prove Theorem \ref{th1.3}.
\begin{proof}[\textbf{Proof of Theorem \ref{th1.3}:}]

Denote $G_i(y,x)$ the Green's function  of $-\Delta+P(\epsilon|y|),\,-\Delta+Q(\epsilon|y|)$. Then by the scaling we have
\begin{align*}
\xi_{1,n}(x)=\int_{\R^3}G_1(y,\frac{x}{\epsilon})\Big(3\mu_1u_{\epsilon,k}^2(\epsilon y)\xi_{1,n}(\epsilon y)
+2\beta  u_{\epsilon,k}(\epsilon y)v_{\epsilon,k}(\epsilon y)\xi_{2,n}(\epsilon y)+
\beta v_{\epsilon,k}^2(\epsilon y)\xi_{1,n}(\epsilon y)\Big)dy,
\end{align*}
and
\begin{align*}
\xi_{2,n}(x)=\int_{\R^3}G_2(y,\frac{x}{\epsilon})\Big(3\mu_2v_{\epsilon,k}^2(\epsilon y)\xi_{2,n}(\epsilon y)
+2\beta  u_{\epsilon,k}(\epsilon y)v_{\epsilon,k}(\epsilon y)\xi_{1,n}(\epsilon y)+
\beta u_{\epsilon,k}^2(\epsilon y)\xi_{2,n}(\epsilon y)\Big)dy.
\end{align*}
Now similar to \eqref{4.2.1}, we compute that from \eqref{5.5}

\begin{equation}\label{5.20}
   \begin{split}
&\Big|\int_{\R^3}G_1(y,\frac{x}{\epsilon})u_{\epsilon,k}^2(\epsilon y)\xi_{1,n}(\epsilon y)dy\Big|\\
&\leq
C\|\xi_{1,n}\|_*\int_{\R^3}G_1(\frac{y}{\epsilon},\frac{x}{\epsilon})
\Big(S_{\epsilon,k}^2+\varphi_k^2\Big)\Big(\sum_{j=1}^me^{-\frac{\tau\theta|y-\eta^{j}|}{\epsilon}}
+\sum_{j=1}^ke^{-\frac{\tau|y-x^{j}|}{\epsilon}}\Big)dy\\
&\leq C\|\xi_{1,n}\|_*\int_{\R^3}G_1(\frac{y}{\epsilon},\frac{x}{\epsilon})
\Big[\sum_{j=1}^me^{-(2\theta-\tau)\frac{|y-\eta^{j}|}{\epsilon}}
+\sum_{j=1}^ke^{-(2-\tau\theta)\frac{|y-x^{j}|}{\epsilon}}\\
&\qquad\qquad\qquad\qquad+C\|\varphi_k\|_*\Big(\sum_{j=1}^me^{-(3\theta\tau-\sigma)\frac{|y-\eta^{j}|}{\epsilon}}
+\sum_{j=1}^ke^{-(3\tau-\sigma)\frac{|y-x^{j}|}{\epsilon}}\Big)\Big]dy\\
&\leq C\|\xi_{1,n}\|_*\Big(\sum_{j=1}^me^{-(3\theta\tau-\sigma)\frac{|x-\eta^{j}|}{\epsilon}}
+\sum_{j=1}^ke^{-(3\tau-\sigma)\frac{|x-x^{j}|}{\epsilon}}\Big),
\end{split}
 \end{equation}
where $\sigma\in (0,\theta\tau)$  is any fixed constant. Then with the same arguments as those of \eqref{5.20}, we can get that

\begin{align*}
&|\xi_{i,n}|\Big(\sum\limits_{j=1}^me^{-\frac{\tau\theta|x-\eta^{j}|}{\epsilon}}
+\sum\limits_{j=1}^ke^{-\frac{\tau|x-x^{j}|}{\epsilon}}\Big)^{-1}\nonumber\\
&\leq C\|(\xi_{1,n},\xi_{2,n})\|_*\frac{\sum\limits_{j=1}^me^{-(3\theta\tau-\sigma)\frac{|x-\eta^{j}|}{\epsilon}}
+\sum\limits_{j=1}^ke^{-(3\tau-\sigma)\frac{|x-x^{j}|}{\epsilon}}}{\sum\limits_{j=1}^me^{-\frac{\tau\theta|x-\eta^{j}|}{\epsilon}}
+\sum\limits_{j=1}^ke^{-\frac{\tau|x-x^{j}|}{\epsilon}}}.
\end{align*}
Since $\xi_{i,n}\rightarrow0$ in $B_{R\epsilon}(x^j)$ and $\|(\xi_{1,n},\xi_{2,n})\|_*=1$, it holds that
$\frac{|\xi_{i,n}|}{\sum\limits_{j=1}^me^{-\frac{\tau\theta|x-\eta^{j}|}{\epsilon}}
+\sum\limits_{j=1}^ke^{-\frac{\tau|x-x^{j}|}{\epsilon}}}$ attains
its maximum in $\mathbb R^3 \setminus \cup_{j=1}^k B_{R\epsilon}(x^j)$.
Thus
\[
\|(\xi_{1,n},\xi_{2,n})\|_* \le o(1)\|(\xi_{1,n},\xi_{2,n})\|_*,
\]
which implies a contradiction to $\|(\xi_{1,n},\xi_{2,n})\|_*=1$ and then we finish this proof.

\end{proof}

\medskip

\appendix
\section{some useful estimates}
\begin{lem}\label{lemb.1}
Assume that $(u_{\epsilon},v_{\epsilon})$ is the positive solution in Theorem A.  Then there are positive
constants $\theta$ and $C$ such that
$$
u_{\epsilon},v_{\epsilon}\leq C\sum_{j=1}^{m}e^{-\frac{\theta|x-\eta^{j}|}{\epsilon}},\,\,\,\,x\in \R^{3},
$$
and
$$
|\nabla u_{\epsilon}|,| \nabla v_{\epsilon}|\leq C e^{-\frac{\delta \theta}{4\epsilon}},\,\,\,x\in \partial B_{\delta}(\eta^{j}).
$$
\end{lem}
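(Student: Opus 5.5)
The plan has two parts: pointwise exponential decay via a comparison argument, and the gradient bound via interior elliptic estimates applied to the decay.

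\emph{Decay.} By Theorem A, $u_\epsilon=\sum_{j=1}^m U(\frac{x-\eta^j}{\epsilon})+\omega_\epsilon$ with $\|(\omega_\epsilon,\varpi_\epsilon)\|=O(\epsilon^5|\ln\epsilon|^2)$, and similarly for $v_\epsilon$. First I will obtain a uniform $L^\infty$ bound: rescale $x=\epsilon y$ and use Moser iteration (or a standard $W^{2,p}$ bootstrap) on
\[
-\Delta \tilde u+P(\epsilon y)\tilde u=\mu_1\tilde u^3+\beta \tilde u\tilde v^2 .
\]
This works because the $H^1$ norm of $(\tilde u,\tilde v)$ is uniformly controlled, the spikes being separated by at least $|\ln\epsilon|^{1/2}$. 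Next I will show that the rescaled nonlinear coefficient is small away from the peaks: $\mu_1\tilde u^2+|\beta|\tilde v^2\le \alpha/2$ on $\R^3\setminus\bigcup_j B_{R}(\eta^j/\epsilon)$ for a fixed large $R$. For this I combine the $L^\infty$ estimate on unit balls with the smallness of the $H^1$ norm of $(\tilde u,\tilde v)$ outside the union of the balls $B_R(\eta^j/\epsilon)$; that smallness follows from the decay of $U$ and $V$ together with the estimate on $\omega_\epsilon$.

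On the exterior region $\Omega_R=\R^3\setminus\bigcup_j B_R(\eta^j/\epsilon)$, the function $\tilde u$ then satisfies $-\Delta\tilde u+\frac{\alpha}{2}\tilde u\le 0$, and $\tilde v$ satisfies the analogous inequality, using $(A_1)$. As comparison function I take
\[
\Phi(y)=C\sum_{j=1}^m e^{-\theta|y-\eta^j/\epsilon|},\qquad \theta^2<\alpha/2 .
\]
On $\Omega_R$, away from the points $\eta^j/\epsilon$, this is a supersolution, since $\Delta e^{-\theta|y|}=(\theta^2-\frac{2\theta}{|y|})e^{-\theta|y|}$. On $\partial B_R$ it dominates $\tilde u$ once $C$ is chosen large in terms of the $L^\infty$ bound and $R$. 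The maximum principle on the unbounded domain is justified because $\tilde u\to0$ at infinity, as $\tilde u\in H^1$ solves an elliptic equation. Scaling back gives the first estimate, with the constant absorbing the bounded region $\bigcup_j B_R$.

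\emph{Gradient bound.} On $\partial B_\delta(\eta^j)$ the distance to every $\eta^i$ is at least of order $\delta$; I need $\ge\delta/2$, which is where the location and separation of the spikes enter. In rescaled variables I apply interior $W^{2,p}$ or Schauder estimates on the unit ball $B_1(y)$ for $y\in\partial B_{\delta/\epsilon}(\eta^j/\epsilon)$. This gives
\[
|\nabla\tilde u(y)|\le C\sup_{B_1(y)}|\tilde u|\le Ce^{-\theta(\delta/2-\epsilon)/\epsilon}.
\]
Since $\nabla u_\epsilon=\epsilon^{-1}\nabla\tilde u$, the factor $\epsilon^{-1}$ is absorbed by weakening the exponent from $\delta\theta/2$ to $\delta\theta/4$ for $\epsilon$ small, which yields the second estimate.

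The main obstacle is the step establishing smallness of the coefficient outside the peaks. Making $\tilde u$ pointwise small there, uniformly in $\epsilon$, from an $H^1$-level remainder estimate requires the local $L^\infty$–$L^2$ estimate (De Giorgi–Nash or Moser on unit balls) applied to the remainder equation. If that is awkward, I would instead run the comparison directly with the radius $R$ chosen so that $U,V\le\eta_0$ outside $B_R$, and treat $\omega_\epsilon$ via a local $L^\infty$ bound by its $L^2$ norm.
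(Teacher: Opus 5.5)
Your proposal is correct and follows essentially the same route as the paper. The paper also bounds the coefficient $P-\mu_1u_\epsilon^2-\beta v_\epsilon^2$ from below outside $\bigcup_j B_{R\epsilon}(\eta^j)$ and compares $u_\epsilon$ with $c\,e^{-\theta|x-\eta^l|/\epsilon}$ via the maximum principle for $-\epsilon^2\Delta+\frac{\lambda}{2}$; your rescaled version is the same argument. Your extra steps supply details the paper omits: the uniform $L^\infty$ bound and local $L^\infty$--$L^2$ estimates that justify smallness of the coefficient, the sum of exponentials as barrier, and the interior elliptic estimate for the gradient, which the paper only asserts. One small correction: the distance $\ge\delta/2$ from $\partial B_\delta(\eta^j)$ to every $\eta^i$ comes from all $\eta^i$ converging to $y_0$, not from their mutual separation.
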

\begin{proof}
We know that $\|((\omega_{\epsilon},\varpi_{\epsilon}))\|=o(\epsilon^{\frac{3}{2}})$, and  $(u_\epsilon,v_\epsilon)$ satisfies
\begin{eqnarray}\begin{cases}\label{5}
-\epsilon^2\Delta u_\epsilon+(P(x)-\mu_1 u_\epsilon^2-\beta v_\epsilon^2)u_\epsilon=0,\,\,\,\,&x\in \R^{3},\cr
-\epsilon^2\Delta v_\epsilon+(Q(x)-\mu_2 v_\epsilon^2+\beta u_\epsilon^2)v_\epsilon=0,\,\,\,\,&x\in \R^{3}.
\end{cases}
\end{eqnarray}
Then following the exponential decays of $U_{\epsilon,\eta^j}$ and $V_{\epsilon,\eta^j}$,  for large fixed $R$ and $\varepsilon$ small enough, it holds
\begin{align*}
(P(x)-\mu_1 u_\epsilon^2-\beta v_\epsilon^2)\geq\frac{\lambda_1}{2},\,\,(Q(x)-\mu_2 v_\epsilon^2+\beta u_\epsilon^2)\geq \frac{\lambda_2}{2}, x\in \R^3\setminus\bigcup\limits_{j=1}^mB_{R\epsilon}(\eta^j),
\end{align*}
where $\lambda_1=\inf\limits_{x\in\R^3}P(x)$, $\lambda_2=\inf\limits_{x\in \R^3}Q(x)$. Thus
\begin{align*}
-\epsilon^2\Delta u_\epsilon+\frac{\lambda_1}{2}u_\epsilon\leq0,\,\,\,-\epsilon^2\Delta v_\epsilon+\frac{\lambda_2}{2}v_\epsilon\leq0,x\in\R^3\setminus\bigcup\limits_{j=1}^mB_{R\epsilon}(\eta^j).
\end{align*}

Now we define 
$\lambda=\max\{\lambda_1,\lambda_2\}$ and
\begin{align*}
\widetilde{L}_\epsilon w=-\epsilon^2\Delta w+\frac{\lambda}{2}w,~~~ \text{for}\,w\in H^1(\R^3).
\end{align*}
Then for $w_l=e^{-\frac{\theta|x-\eta^l|}{\epsilon}}$, where $0<\theta<\sqrt{\frac{\lambda}{2}}$ and $l\in{1,2,\cdots,m}$, we have
\begin{align*}
\tilde{L}_\epsilon w_l(x)=&-\epsilon^2(\frac{\theta^2}{\epsilon^2}-\frac{2\theta}{|x-\eta^l|})e^{-\frac{\theta|x-\eta^l|}{\epsilon}}
+\frac{\lambda}{2}e^{-\frac{\theta|x-\eta^l|}{\epsilon}}\cr
=&(\frac{2\epsilon\theta}{|x-\eta^l|}+\frac{\lambda}{2}-\theta^2)e^{-\frac{\theta|x-\eta^l|}{\epsilon}}\geq0.
\end{align*}
Next, letting $\bar{w}_l=cw_l(x)-u_\epsilon$ with $c>0$, then
\begin{align*}
\widetilde{L}_\epsilon\bar{w}_l=c\widetilde{L}_\epsilon w_l-\widetilde{L}_\epsilon u_\varepsilon\geq0,\text{in}\,\R^3\setminus\bigcup\limits_{j=1}^mB_{R\epsilon}(\eta^j).
\end{align*}
Also, since $u_\epsilon\in C(\R^3)$, there exists $c_0>0$ such that
\begin{align*}
|u_\epsilon(x)|\leq c_0\,\,\,\text{on}\,\partial(\bigcup\limits_{j=1}^mB_{R\epsilon(\eta^j)}).
\end{align*}
So taking $c=c_0e^{R\theta}$, we have $\bar{w}_l(x)=cw_l(x)-u_\epsilon\geq ce^{-R\theta}-c_0\geq0$ on $\partial(\bigcup\limits_{j=1}^mB_{R\epsilon(\eta^j)})$.
Thus for the above fixed large $R$, we obtain
\begin{eqnarray}\begin{cases}\label{b}
\widetilde{L}_\epsilon\bar{w}_l(x)\geq0,\,\,\,\hbox{in}\,\,\R^3\setminus\bigcup\limits_{j=1}^mB_{R\epsilon}(\eta^j);\cr
\bar{w}_l\geq0,\,\,\,\,\hbox{on}\,\partial(\bigcup\limits_{j=1}^mB_{R\epsilon}(\eta^j));\cr
\bar{w}_l(x)\rightarrow0,\,\,\,\,\,\hbox{as}\,|x|\rightarrow+\infty.
\end{cases}
\end{eqnarray}

Then by maximum principle, we have $\bar{w}_l\geq0$ for $x\in \R^3\setminus \bigcup\limits_{j=1}^mB_{R\epsilon}(\eta^j)$.
This gives that the results hold since we can apply the same argument to $v_\epsilon$.

\end{proof}

\begin{lem}\label{lemb.2}
Assume that $(\xi_{1\epsilon},\xi_{2\epsilon})$ solves $L_{\epsilon}(\xi_{1\epsilon},\xi_{2\epsilon})=0.$ Then it holds
$$
\|(\xi_{1\epsilon},\xi_{2\epsilon})\|=O(\epsilon^{\frac{3}{2}}).
$$
\end{lem}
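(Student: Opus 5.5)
The claim is Lemma~\ref{lemb.2}, $\|(\xi_{1\epsilon},\xi_{2\epsilon})\|=O(\epsilon^{3/2})$. The normalization that makes it meaningful is the one used in the proof of Proposition~\ref{lema.1}, namely $\|(\xi_{1\epsilon},\xi_{2\epsilon})\|_{L^\infty(\R^3)}\le 1$. The $\epsilon^{3/2}$ is the natural volume factor: $\xi_{i\epsilon}$ lives essentially on $m$ balls of radius $O(\epsilon)$, and the $\epsilon$-weighted $H^1$ norm of a bounded function concentrated on a set of volume $\epsilon^3$ is $O(\epsilon^{3/2})$. So the plan has two parts: an exponential pointwise decay estimate for $\xi_{i\epsilon}$ away from the spikes, and then an energy identity that converts this decay into the norm bound.

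\emph{Step 1: decay.} Write the equations for $(\xi_{1\epsilon},\xi_{2\epsilon})$ as
\begin{align*}
-\epsilon^2\Delta\xi_{1\epsilon}+\big(P-3\mu_1u_\epsilon^2-\beta v_\epsilon^2\big)\xi_{1\epsilon}=2\beta u_\epsilon v_\epsilon\xi_{2\epsilon},
\end{align*}
and analogously for $\xi_{2\epsilon}$. By Lemma~\ref{lemb.1}, $u_\epsilon,v_\epsilon\le C\sum_j e^{-\theta|x-\eta^j|/\epsilon}$. Hence, for $R$ large and fixed, outside $\bigcup_j B_{R\epsilon}(\eta^j)$ the potential is at least $\alpha/2$, and the coupling coefficient $|2\beta u_\epsilon v_\epsilon|$ is as small as we like.

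Set $\Phi=|\xi_{1\epsilon}|+|\xi_{2\epsilon}|$, or better $\xi_{1\epsilon}^2+\xi_{2\epsilon}^2$. Using Kato's inequality and absorbing the small coupling term, $\Phi$ satisfies $-\epsilon^2\Delta\Phi+\frac{\alpha}{4}\Phi\le 0$ in the exterior region. I then compare with the barrier $C\sum_j e^{-\theta'|x-\eta^j|/\epsilon}$, with $\theta'<\sqrt{\alpha}/2$. This is the same supersolution computation as in the proof of Lemma~\ref{lemb.1}, and the constant is fixed by $\Phi\le 2$ on $\partial B_{R\epsilon}(\eta^j)$. Since $\xi_{i\epsilon}\in H^1$, it decays at infinity, and the maximum principle gives $|\xi_{i\epsilon}|\le C\sum_j e^{-\theta'|x-\eta^j|/\epsilon}$ on all of $\R^3$.

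\emph{Step 2: energy.} Test each equation with its own unknown and integrate over $\R^3$; this is legitimate since $\xi_{i\epsilon}\in H^1$. The result is
\begin{align*}
\|(\xi_{1\epsilon},\xi_{2\epsilon})\|^2=\int_{\R^3}\Big(3\mu_1u_\epsilon^2\xi_{1\epsilon}^2+3\mu_2v_\epsilon^2\xi_{2\epsilon}^2+\beta v_\epsilon^2\xi_{1\epsilon}^2+\beta u_\epsilon^2\xi_{2\epsilon}^2+4\beta u_\epsilon v_\epsilon\xi_{1\epsilon}\xi_{2\epsilon}\Big).
\end{align*}
Every integrand on the right is bounded by $C\sum_j e^{-2\theta|x-\eta^j|/\epsilon}$, using $|\xi_{i\epsilon}|\le1$ and Lemma~\ref{lemb.1}. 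Rescaling $x=\eta^j+\epsilon y$ shows each such integral is $O(\epsilon^3)$. Taking square roots gives $\|(\xi_{1\epsilon},\xi_{2\epsilon})\|=O(\epsilon^{3/2})$.

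Step 2 actually needs only Lemma~\ref{lemb.1} together with the $L^\infty$ normalization; Step 1 is needed only if one also wants the gradient and boundary bounds on $\partial B_\delta(\eta^j)$ used in \eqref{a.4.2}. To obtain those, I would apply interior elliptic estimates on balls of radius $\epsilon$ to the rescaled functions, which bounds $\epsilon|\nabla\xi_{i\epsilon}|$ by the local sup of $\xi_{i\epsilon}$, and hence gives $O(e^{-\sigma/\epsilon})$ on $\partial B_\delta(\eta^j)$.

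The main obstacle is the coupling term in Step 1 when $\beta<0$: the system is then not cooperative, so a componentwise maximum principle fails. Working with the scalar quantity $\Phi$ and absorbing the coupling, which is small in the exterior region, is the fix I would try first.
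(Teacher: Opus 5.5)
Your argument is correct, and Step 2 alone proves the lemma under the normalization $\|(\xi_{1\epsilon},\xi_{2\epsilon})\|_{L^\infty(\R^3)}\le1$ from the proof of Proposition~\ref{lema.1}. Without some normalization the statement is meaningless, and the paper tacitly uses the same one. Your route, however, differs from the paper's.

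The paper subtracts the two equations and works with $\eta_\epsilon=\xi_{1\epsilon}-\xi_{2\epsilon}$. It splits $u_\epsilon,v_\epsilon$ into the bubbles plus the remainders $(\omega_\epsilon,\varpi_\epsilon)$ of Theorem A. It bounds the bubble part by $C\epsilon^3$ using boundedness, bounds the remainder part by H\"older and the $\epsilon$-scaled Sobolev inequality, and closes with $\|\eta_\epsilon\|_\epsilon^2\le C(\epsilon^3+\epsilon\|\eta_\epsilon\|_\epsilon^{4/3})$.

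You instead test each equation against its own unknown, i.e.\ $\langle L_\epsilon(\xi_{1\epsilon},\xi_{2\epsilon}),(\xi_{1\epsilon},\xi_{2\epsilon})\rangle=0$. This gives an exact identity for the full norm of the pair. You then invoke the pointwise decay of Lemma~\ref{lemb.1}, which already accounts for the remainders, so neither the splitting nor the absorption step is needed.

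The paper's route needs only $H^1$-smallness of $(\omega_\epsilon,\varpi_\epsilon)$ rather than pointwise decay of $u_\epsilon,v_\epsilon$. But as written it bounds $\|(\xi_{1\epsilon},\xi_{2\epsilon})\|$ by an integral of $\eta_\epsilon^2$ alone, which is unjustified. The difference equation is not closed in $\eta_\epsilon$, and $\eta_\epsilon$ cannot control both components (it vanishes when $\xi_{1\epsilon}=\xi_{2\epsilon}$). Your identity avoids this problem.

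The price you pay is dependence on Lemma~\ref{lemb.1}, which is legitimate since that lemma precedes this one. If you wanted to avoid it, you could keep your identity and estimate the $\omega_\epsilon^2,\varpi_\epsilon^2$ contributions by the paper's H\"older--Sobolev bound. That gives $\|(\xi_{1\epsilon},\xi_{2\epsilon})\|^2\le C\epsilon^3+o(\epsilon)\|(\xi_{1\epsilon},\xi_{2\epsilon})\|^{4/3}$ and the same conclusion.

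Step 1 is not needed for this statement. It is essentially a cleaner version of the decay estimate in Lemma~\ref{lemb.3}, and with $\Phi=\xi_{1\epsilon}^2+\xi_{2\epsilon}^2$ it does not even require Kato's inequality.
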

\begin{proof}
Since $(\xi_{1\epsilon},\xi_{2\epsilon})$ satisfies $L_\epsilon(\xi_{1\epsilon},\xi_{2\epsilon})=0$, we have
\begin{align}\label{B.3}
-\epsilon^2(\xi_{1\epsilon}-\xi_{2\epsilon})+(P(x)\xi_{1\epsilon}-Q(x)\xi_{2\epsilon})=&u_\epsilon^2(3\mu_1\xi_{1\epsilon}
-\beta\xi_{2\epsilon})+v_\epsilon^2(\beta \xi_{1\epsilon}-3\mu_2\xi_{2\epsilon})\cr
&+2\beta u_\epsilon v_\epsilon(\xi_{1\epsilon}-\xi_{2\epsilon}).
\end{align}
Denoting $\eta_\epsilon=\xi_{1\epsilon}-\xi_{2\epsilon}$, then $\eta_\epsilon$  satisfies
\begin{align}\label{B.3.1}
\|(\xi_{1\epsilon},\xi_{2\xi})\|\leq C\int_{\R^3}(u_\epsilon^2+v_\epsilon^2)\eta_\epsilon^2.
\end{align}
Noting that $(u_\epsilon,v_\epsilon)=\Big(\sum\limits_{j=1}^{m}U(\frac{y-\eta^{j}}{\epsilon})+\omega_{\epsilon}(y),
 \sum\limits_{j=1}^{m}V(\frac{y-\eta^{j}}{\epsilon})+\varpi_{\epsilon}(y)\Big)$ and
\begin{align}\label{A.4}
\int_{\R^3}U^2_{\epsilon,\eta^j}\eta_\epsilon^2dx\leq C\epsilon^3,\,\,\int_{\R^3}V^2_{\epsilon,\eta^j}\eta_\epsilon^2dx\leq C\epsilon^3,
\end{align}
\begin{align}\label{A.5}
\int_{\R^3}\omega_\epsilon^2\eta_\epsilon^2dx\leq \Big(\int_{\R^3}\varphi_\epsilon^6dx\Big)^{\frac{1}{3}}\Big(\int_{\R^3}\eta_\epsilon^3dx\Big)^{\frac{2}{3}}\leq C\epsilon\|\eta_\epsilon\|_\epsilon^{\frac{4}{3}},
\end{align}
and
\begin{align}\label{A.5}
\int_{\R^3}\varpi_\epsilon^2\eta_\epsilon^2dx\leq C\epsilon\|\eta_\epsilon\|_\epsilon^{\frac{4}{3}},
\end{align}
where $\|\eta_\epsilon\|_\epsilon=\Big(\ds\int_{\R^3}\epsilon^2|\eta_\epsilon|^2+\eta^2\Big)^\frac{1}{2,}$
from \eqref{B.3.1} to \eqref{A.5}, we have
\begin{align*}
\|\eta_\epsilon\|_\epsilon^2\leq C(\epsilon^3+\epsilon\|\eta_\epsilon\|_\epsilon^{\frac{4}{3}}),
\end{align*}
which completes the proof.
\end{proof}

\begin{lem}\label{lemb.3}
Assume that $(\xi_{1\epsilon},\xi_{2\epsilon})$ solves $L_{\epsilon}(\xi_{1\epsilon},\xi_{2\epsilon})=0.$
Then there hold that
constants $\theta$ and $C$ such that
$$
|\xi_{i \epsilon}(x)| \leq C\sum_{j=1}^{m}e^{-\frac{\theta|x-\eta^{j}|}{\epsilon}},\,\,\,\,x\in \R^{3},\,\,\,i=1,2,j=1,\cdots,m,
$$
and
$$
|\xi_{i \epsilon}(x)|\leq C e^{-\frac{\delta \theta}{4\epsilon}},\,\,\,x\in \partial B_{\delta}(\eta^{j})\,\,\,\,i=1,2,j=1,\cdots,m
$$
\end{lem}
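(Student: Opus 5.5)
The proof will be a comparison argument, modelled on the proof of Lemma \ref{lemb.1}, applied to the linearized system. I first normalize: I take $(\xi_{1\epsilon},\xi_{2\epsilon})$ with $\|(\xi_{1\epsilon},\xi_{2\epsilon})\|_{L^\infty(\R^3)}\le C$. This is the normalization used in Proposition \ref{lema.1}. Alternatively, $L^\infty$ boundedness follows from Lemma \ref{lemb.2} after rescaling $y\mapsto \epsilon y+\eta^j$ and applying local elliptic estimates (Moser iteration) to the rescaled system, since the coefficients $u_\epsilon^2,v_\epsilon^2,u_\epsilon v_\epsilon$ are uniformly bounded.

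Next I use Lemma \ref{lemb.1}: outside $\bigcup_{j}B_{R\epsilon}(\eta^j)$ the potentials satisfy $u_\epsilon^2+v_\epsilon^2\le Ce^{-2\theta R}$, which is as small as desired once $R$ is large. Because the system is coupled and $\beta$ may be negative, I cannot compare each component separately. Instead I work with $w:=\xi_{1\epsilon}^2+\xi_{2\epsilon}^2$, or equivalently with Kato's inequality applied to $|\xi_{1\epsilon}|+|\xi_{2\epsilon}|$. Multiplying the equations by $\xi_{i\epsilon}$ and adding gives
\[
-\epsilon^2\Delta w+2\min\{\alpha,\alpha\}w\le 2\big(3\mu_1u_\epsilon^2+3\mu_2v_\epsilon^2+|\beta|(u_\epsilon^2+v_\epsilon^2)+2|\beta|u_\epsilon v_\epsilon\big)w-2\epsilon^2(|\nabla\xi_{1\epsilon}|^2+|\nabla\xi_{2\epsilon}|^2),
\]
where $\alpha$ is the lower bound from $(A_1)$. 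Hence $-\epsilon^2\Delta w+\alpha w\le 0$ in $\R^3\setminus\bigcup_j B_{R\epsilon}(\eta^j)$ for $R$ fixed large and $\epsilon$ small.

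The barrier is then the same as in Lemma \ref{lemb.1}. The function $w_l=e^{-2\theta|x-\eta^l|/\epsilon}$, with $4\theta^2<\alpha$, is a supersolution of $-\epsilon^2\Delta+\alpha$. On $\partial B_{R\epsilon}(\eta^j)$ we have $w\le C$, so $\sum_l c\,e^{2\theta R}w_l\ge w$ there, and both functions vanish at infinity ($w\in L^1$, and it decays by standard regularity). The maximum principle then gives $w\le C\sum_j e^{-2\theta|x-\eta^j|/\epsilon}$, and taking square roots yields the first estimate. Inside the balls $B_{R\epsilon}(\eta^j)$ the estimate is trivial because $\xi$ is bounded there.

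For the boundary estimate I take $x\in\partial B_\delta(\eta^j)$. Theorem A gives $|\eta^i-\eta^j|\le 2\delta$, but the peaks may all lie within $\delta/2$ of each other, so the pointwise bound only shows $|x-\eta^i|\ge \delta/2$ if the $\eta^i$ cluster. Since all $\eta^i\to y_0$, I take $\delta$ fixed and $\epsilon$ small so that $|\eta^i-\eta^j|<\delta/2$. Then $|x-\eta^i|\ge \delta/2$, and the first estimate gives $|\xi_{i\epsilon}(x)|\le Cme^{-\theta\delta/(2\epsilon)}\le Ce^{-\delta\theta/(4\epsilon)}$. The matching gradient bound, needed for \eqref{a.4.2}, follows from interior elliptic estimates on balls of radius $\epsilon$ after rescaling.

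The main obstacle is the coupling sign: with $\beta<0$ the system is not cooperative, so componentwise comparison fails. The $w=\xi_1^2+\xi_2^2$ subsolution trick resolves this, because all the coupling terms can be absorbed by their absolute values once the potentials are small. A secondary point is justifying the decay of $w$ at infinity for $H^1$ solutions, which needs local $L^\infty$ bounds via Moser iteration.
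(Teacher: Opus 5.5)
Your proof is correct, but it takes a different route from the paper's.

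\textbf{The paper's route.} The paper works with the difference $\xi_{1\epsilon}-\xi_{2\epsilon}$. It splits the exterior of $B_{R\epsilon}(\eta^j)$ according to the sign of this difference (the sets $D_1$, $D_2$). It treats $(3\mu_1\xi_{1\epsilon}-\beta\xi_{2\epsilon})u_\epsilon^2+(\beta\xi_{1\epsilon}-3\mu_2\xi_{2\epsilon})v_\epsilon^2$ as a source $f_\epsilon$ and solves $-\epsilon^2\Delta\eta_\epsilon+\frac{\lambda}{2}\eta_\epsilon=f_\epsilon$ with the explicit kernel $\frac{1}{4\pi|x|}e^{-\sqrt{\lambda/2}|x|}$, bounding $\eta_\epsilon$ by H\"older. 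It then compares $g_\epsilon=(\xi_{1\epsilon}-\xi_{2\epsilon})-\eta_\epsilon$ with the barrier $e^{-\theta|x-\eta^j|/\epsilon}$. The bound on $\partial B_\delta(\eta^j)$ is simply attributed to an $L^p$ estimate.

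\textbf{Your route.} Your function $w=\xi_{1\epsilon}^2+\xi_{2\epsilon}^2$ replaces all of this with one scalar subsolution. The coupling terms are absorbed through their absolute values, so the sign of $\beta$ and the fact that $P\neq Q$ play no role. No source term or Green's function is needed, and a single comparison with $e^{-2\theta|x-\eta^l|/\epsilon}$ suffices. The componentwise bound then comes directly from $|\xi_{i\epsilon}|\le\sqrt{w}$. The paper's version instead has to pass from a bound on the difference to each component, and needs $P\xi_{1\epsilon}-Q\xi_{2\epsilon}\ge\frac{\lambda}{2}(\xi_{1\epsilon}-\xi_{2\epsilon})$ on $D_1$. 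Neither step follows as written there.

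\textbf{Points you make explicit.} You state two things the paper leaves implicit. First, the problem is linear, so the statement needs a normalization such as $\|(\xi_{1\epsilon},\xi_{2\epsilon})\|_{L^\infty}\le C$ (the one used in Proposition~\ref{lema.1}). Second, the estimate on $\partial B_\delta(\eta^j)$ requires the peaks to cluster, i.e.\ $\eta^i\to y_0$. This comes from ``concentrating near $y_0$'' in Theorem A, not from the displayed condition $\eta^j\in B_\delta(y_0)$.

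\textbf{Trade-off.} A Green's-function representation like the paper's is what one would need for an inhomogeneous linear problem, as in Section~4. For the homogeneous kernel equation here, your comparison argument is shorter and more robust.
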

\begin{proof}
As we know $(\xi_{1\epsilon},\xi_{2\epsilon})$ satisfies the equation $L_\epsilon(\xi_{1\xi},\xi_{2\xi})=0$, that is
\begin{eqnarray}\begin{cases}\label{B.2}
-\varepsilon^2\Delta\xi_{1\varepsilon}+P(x)\xi_{1\varepsilon}=3\mu_1u_\epsilon^2\xi_{1\epsilon}+\beta\xi_{1\epsilon}v_\epsilon^2+2\beta u_\epsilon v_\epsilon\xi_{2\epsilon},\cr
-\varepsilon^2\Delta\xi_{2\varepsilon}+Q(x)\xi_{2\varepsilon}=3\mu_2v_\epsilon^2\xi_{2\epsilon}+\beta\xi_{2\epsilon}u_\epsilon^2+2\beta u_\epsilon v_\epsilon\xi_{1\epsilon}.
\end{cases}
\end{eqnarray}
By Lemma A.1, we know there is large fixed $R\gg1$ such that
\begin{align}\label{A.7}
(P(x)-3\mu_1u_\epsilon^2-\beta u_\epsilon v_\epsilon)\geq\frac{\lambda}{2},\,\,(Q(x)-3\mu_2v_\epsilon^2-\beta u_\epsilon v_\epsilon)\geq\frac{\lambda}{2},\,\,\hbox{in}\,\R^3\setminus B_{R\epsilon}(\eta^{j}).
\end{align}
where $\lambda=\inf\limits_{x\in\R^3}\{P(x),Q(x)\}$. It is assumed that
\begin{align*}
 D_1:=\{x\in\R^3\setminus B_{R\epsilon}(\eta^{j}):(\xi_{1\epsilon}-\xi_{2\xi})\geq0\}.
\end{align*}
From \eqref{B.2}-\eqref{A.7}, we obtain
\begin{align*}
-\epsilon^2\Delta(\xi_{1\epsilon}-\xi_{2\epsilon})+\frac{\lambda}{2}(\xi_{1\epsilon}-\xi_{2\epsilon})\leq(3\mu_1\xi_{1\epsilon}-\beta\xi_{2\epsilon})u_\epsilon^2
+(\beta\xi_{1\epsilon}-3\mu_2\xi_{2\epsilon})v_\epsilon^2=:f_\epsilon(x).
\end{align*}
Denote $\eta_\epsilon$ be a solution of the equation $-\epsilon^2\Delta\eta_\epsilon+\frac{\lambda}{2}\eta_\epsilon=f_\epsilon$ and $g_\epsilon(x)=(\xi_{1\epsilon}-\xi_{2\epsilon})-\eta_\epsilon$.
Then $g_\epsilon(x)$ satisfies
\begin{align*}
-\epsilon^2\Delta g_\epsilon+\frac{\lambda}{2}g_\epsilon\leq0.
\end{align*}
Setting $\bar{\eta}_\epsilon(x)=\eta_\epsilon(\epsilon x)$, then $\bar{\eta}_\epsilon(x)$ solves
\begin{align*}
-\Delta\bar{\eta}_\epsilon+\frac{\lambda}{2}\bar{\eta}_\epsilon=f_\epsilon(\epsilon x).
\end{align*}
Thus, we have
\begin{align*}
\bar{\eta}_\epsilon=\int_{\R^3}G^{\sqrt{\frac{\lambda}{2}}}(x-z)f_{\epsilon}(\epsilon z)dz,\hbox{where}\,\,G^{\sqrt{\frac{\lambda}{2}}}(x)=\frac{1}{4\pi|x|}e^{-\sqrt{\frac{\lambda}{2}}|x|}.
\end{align*}
Also, there holds
\begin{align*}
\eta_\epsilon(x)=\bar{\eta}_\epsilon(\frac{x}{\epsilon})=\int_{\R^3}G^{\sqrt{\frac{\lambda}{2}}}(\frac{x}{\epsilon}-z)f(\epsilon z)dz.
\end{align*}
By Holder inequality, we have for any $x\in D_1$,
\begin{align*}
|\xi_{i\epsilon}|\leq C|\eta_\epsilon|=&\int_{\R^3}\frac{1}{4\pi|\frac{x}{\epsilon}-z|}e^{-\sqrt{\frac{\lambda}{2}}|\frac{x}{\epsilon}-z|}f(\epsilon z)dz\cr
=&\int_{\R^3}\frac{1}{4\pi|\frac{x}{\epsilon}-z|}e^{-\sqrt{\frac{\lambda}{2}}|\frac{x}{\epsilon}-z|}((3\mu_1\xi_{1\epsilon}-\beta\xi_{2\epsilon})u_\epsilon^2
+(\beta\xi_{1\epsilon}-3\mu_2\xi_{2\epsilon})v_\epsilon^2)\cr
\leq& Ce^{-\frac{\theta|x-\eta^{j}|}{\epsilon}}\|(\xi_{1\epsilon},\xi_{2\epsilon})\|\|(u_\epsilon,v_\epsilon)\|^2\leq Ce^{-\frac{\theta|x-\eta^{j}|}{\epsilon}}.
\end{align*}
Define an operator
\begin{align*}
\tilde{L}_\epsilon v:=-\epsilon^2\Delta v+\frac{\lambda}{2}v.
\end{align*}
Letting $v_1(x)=e^{-\frac{\theta|x-\eta^{j}|}{\epsilon}}$, then we have
\begin{align*}
\tilde{L}_\epsilon v_1(x)=\Big(\frac{\lambda}{2}-\theta^2+\frac{2\epsilon \theta}{|x-\eta^{j}|}\Big)e^{-\frac{\theta|x-\eta^{j}|}{\epsilon}}>0.
\end{align*}
Setting $g_\epsilon'(x)=Me^{\theta R}e^{-\frac{\theta|x-\eta^{j}|}{\epsilon}}-g_\epsilon(x)$ with a large constant $M>0$, it holds
\begin{align*}
\tilde{L}_\epsilon g_\epsilon'\geq0,\,\,\hbox{in}\, D_1.
\end{align*}
If $x\in \partial B_{R \epsilon}(\eta^{j})$, we easily get $g_\epsilon'(x)\geq M-g_\epsilon\geq0$.
If $(\xi_{1\epsilon},\xi_{2\epsilon})=(0,0)$, then we have
\begin{align*}
g_\epsilon'(x)\geq Me^{\theta R}e^{-\frac{\theta|x-\eta^{j}|}{\epsilon}}-\eta_\epsilon(x)\geq0.
\end{align*}
Hence for $x\in \partial D_1$, it has $g_\epsilon'(x)\geq0$. By the maximum principle, we have $g_\epsilon'(x)\geq0,\forall x\in D_1$. Then for any $x\in D_1$, we know
\begin{align*}
|\xi_{i\epsilon}|\leq |\eta_\epsilon(x)|+Me^{\theta R}e^{-\frac{\theta|x-\eta^{j}|}{\epsilon}}-|\eta_\epsilon|\leq Ce^{-\frac{\theta|x-\eta^{j}|}{\epsilon}}.
\end{align*}

Now we consider the case
\begin{align*}
x\in D_2=\{x\in\R^3\setminus B_{R \epsilon}(\eta^{j}):~~~ \xi_{1\epsilon}+\xi_{2\epsilon}<0\}.
\end{align*}
From \eqref{B.2}, we can deduce that
\begin{align*}
-\epsilon^2\Delta(\xi_{1\epsilon}-\xi_{2\epsilon})+\frac{\lambda}{2}(\xi_{1\epsilon}-\xi_{2\epsilon})\geq f_\epsilon(x).
\end{align*}
Similarly, we have $|\xi_{1\epsilon}|+|\xi_{2\epsilon}|\geq Ce^{-\frac{\theta|x-\eta^{j}|}{\epsilon}}$. Moreover, for any $x\in B_{R\epsilon}(\eta^{j})$
\begin{align*}
|\xi_{i\epsilon}|\leq C\leq Ce^{\theta R}e^{-\frac{\theta|x-\eta^{j}|}{\epsilon}}.
\end{align*}
On the other hand, by \eqref{A.7}, we obtain
\begin{align*}
-\epsilon^2\Delta(\xi_{1\epsilon}-\xi_{2\epsilon})+\frac{\lambda}{2}(\xi_{1\epsilon}-\xi_{2\epsilon})\leq(3\mu_1\xi_{1\epsilon}-\beta\xi_{2\epsilon})u_\epsilon^2
+(\beta\xi_{1\epsilon}-3\mu_2\xi_{2\epsilon})v_\epsilon^2.
\end{align*}
Using $L^p-$ estimate , for any $z\in \partial B_\delta(\eta^{j})$, we have
\begin{align*}
\|(\xi_{1\epsilon},\xi_{2\epsilon})\|\leq Ce^{-\frac{\delta\theta}{4\epsilon}}.
\end{align*}
\end{proof}

\section{ Estimates of the energy functional}
In this section, we mainly estimate $I(S_{\epsilon, k }, T_{\epsilon, k})$ and $I(\bar{S}_{\epsilon, k }, \bar{T}_{\epsilon, k})$.
\begin{prop}\label{propB.1}
For $k$ large enough, we have
\begin{align*}
I(S_{\epsilon, k},T_{\epsilon, k})=&A+\big(\frac{a_1\gamma_1^2k}{2r^{m_1}}+\frac{a_2\gamma_2^2k}{2r^{m_2}}\big)\int_{\R^3}W_\epsilon^2
-C_\beta\frac{k}{r}\sum\limits_{j=2}^ke^{-\frac{|x^j-x^1|}{\epsilon}}-\frac{k^2}{r}e^{\frac{-r}{\epsilon}}
+O(ke^{-\frac{\pi r}{\epsilon k}})\cr
=&kA_0+k\big(\frac{a_1\gamma_1^2}{r^{m_1}}+\frac{a_2\gamma_2^2}{r^{m_2}}\big)B-\frac{k^2}{r}e^{-\frac{r\pi}{\epsilon k}}+O(ke^{-\frac{\pi r}{\epsilon k}}),
\end{align*}
where $A_0=\frac{\mu_1+\mu_2-2\beta}{4(\mu_1\mu_2-\beta^2)}\ds\int_{\R^3}W_\epsilon^4$,\,$B=\ds\frac{1}{2}\int_{\R^3}W_\epsilon^2$,\,$C_\beta>0$ is a constant depending on $\beta$, and
$W_\epsilon=W(\frac{x}{\epsilon})$.
\end{prop}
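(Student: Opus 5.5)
We need to prove Proposition B.1, the energy expansion of $I(S_{\epsilon,k},T_{\epsilon,k})$. The plan is to split the energy into self-interaction parts, potential parts and interaction parts, using the equations satisfied by $(u_\epsilon,v_\epsilon)$ and by $(U,V)$.

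\emph{Step 1: expand the quadratic part.} Write $S_{\epsilon,k}=u_\epsilon+\sum_j U_{\epsilon,x^j}$ and $T_{\epsilon,k}=v_\epsilon+\sum_j V_{\epsilon,x^j}$. Expand the quadratic part of $I$, and use that $(U_{\epsilon,x^j},V_{\epsilon,x^j})$ solves the $\epsilon$-scaled limit system \eqref{eqQ1} with constant potential $1$. Writing $P=1+(P-1)$, the gradient terms are then integrated by parts. This gives:
\begin{itemize}
\item $I(u_\epsilon,v_\epsilon)$, which is a constant independent of $k$ and $r$;
\item $k$ copies of the single-bump energy $\frac14\int(\mu_1U_\epsilon^4+\mu_2V_\epsilon^4+2\beta U_\epsilon^2V_\epsilon^2)$; since $(U,V)=(\gamma_1W,\gamma_2W)$ and $\mu_1\gamma_1^4+\mu_2\gamma_2^4+2\beta\gamma_1^2\gamma_2^2=\gamma_1^2+\gamma_2^2=\frac{\mu_1+\mu_2-2\beta}{\mu_1\mu_2-\beta^2}$, this is exactly $kA_0$;
\item the potential term $\frac12\sum_j\int\big((P-1)U_{\epsilon,x^j}^2+(Q-1)V_{\epsilon,x^j}^2\big)$.
\end{itemize}

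\emph{Step 2: the potential term.} By symmetry it equals $k$ times the $j=1$ term. Since $U$ decays exponentially and $|x^1|=r\to\infty$, we may expand $P(|x|)$ via $(H_2)$ around $|x|\sim r$. This yields $\frac{a_1\gamma_1^2k}{2r^{m_1}}\int W_\epsilon^2+O(k/r^{m_1+1})$, and similarly for $Q$.

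\emph{Step 3: interaction terms between the infinity bumps.} Subtracting the single-bump contributions leaves cross terms $\sum_{i\ne j}\int U^3_{\epsilon,x^i}U_{\epsilon,x^j}$ (and the analogous $V$ and $\beta$ terms), plus higher-order overlaps. Using the standard expansion $\int W_{\epsilon,x^i}^3W_{\epsilon,x^j}=(C+o(1))W(|x^i-x^j|/\epsilon)$ together with $W(s)\sim Cs^{-1}e^{-s}$, we obtain $-C_\beta\frac{k}{r}\sum_{j\ge2}e^{-|x^j-x^1|/\epsilon}$, with
\[
C_\beta\propto\mu_1\gamma_1^4+\mu_2\gamma_2^4+2\beta\gamma_1^2\gamma_2^2>0 .
\]
The sum is dominated by the two neighbours $j=2,k$, with $|x^2-x^1|=2r\sin\frac\pi k\approx\frac{2\pi r}{k}$. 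The remaining neighbours contribute only the error $O(ke^{-\pi r/(\epsilon k)})$ claimed in the statement.

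\emph{Step 4: interactions between $(u_\epsilon,v_\epsilon)$ and the infinity bumps.} These are terms of the form $\int u_\epsilon^3U_{\epsilon,x^j}$, $\int u_\epsilon U^3_{\epsilon,x^j}$, and the mixed $\beta$ terms. Lemma \ref{lemb.1} gives $u_\epsilon,v_\epsilon\le Ce^{-\theta|x-\eta^l|/\epsilon}$ with $\eta^l$ near the origin, and $|x^j-\eta^l|\sim r$. Each such term is therefore $O(e^{-\theta r/\epsilon})$, so the total is $O(ke^{-\theta r/\epsilon})$. This is absorbed into the $\frac{k^2}{r}e^{-r/\epsilon}$ term and the $O(ke^{-\pi r/(\epsilon k)})$ error, because $r/k\ll r$.

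\emph{Main obstacle.} The delicate point is the sign and exact constant in Step 3 for the coupled system: one must check that the $\beta$ cross terms combine into a single positive multiple $C_\beta$ of $W(|x^1-x^2|/\epsilon)$. I would handle this by reducing everything to $W$ via $U=\gamma_1W$, $V=\gamma_2W$ and using the identity $\mu_1\gamma_1^2+\beta\gamma_2^2=1$ (and its counterpart), which follows from the definition of $\gamma_1,\gamma_2$.

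Finally, $r^{-m_i-1}$ terms and the Taylor remainders from $(H_2)$ are lower order, since $r\sim k\ln k$. Collecting Steps 1–4 gives both displayed forms of the expansion.
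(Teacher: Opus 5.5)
Your proposal is correct in substance and follows the same decomposition as the paper's proof: single-bump energies giving $kA_0$ via $(U,V)=(\gamma_1W,\gamma_2W)$, the potential term expanded with $(H_2)$, and exponential-decay control of the bump--bump and $u_\epsilon$--bump interactions via Lemma~\ref{lemb.1}. It is in fact more careful than the paper, which silently drops the $r$-independent constant $I(u_\epsilon,v_\epsilon)$ that your Step~1 correctly produces, so what you actually prove is the second displayed line with $I(u_\epsilon,v_\epsilon)+kA_0$ in place of $kA_0$; likewise the paper only asserts $C_\beta>0$, whereas you derive $C_\beta\propto\gamma_1^2+\gamma_2^2>0$ from $\mu_1\gamma_1^2+\beta\gamma_2^2=\mu_2\gamma_2^2+\beta\gamma_1^2=1$.

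One slip in Step~3: $W(s)\sim Cs^{-1}e^{-s}$ gives the prefactor $\epsilon/|x^j-x^1|$, not $1/r$, so the nearest-neighbour interaction is of order $\frac{k^2}{r}e^{-2\pi r/(\epsilon k)}$ (which, divided by $k$, is the term entering $g(t)$ in the proof of Theorem~\ref{th3}). Your conclusion matches the first displayed line only because its error $O(ke^{-\pi r/(\epsilon k)})$ is crude enough to absorb every interaction term.
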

\begin{proof}
From \eqref{eqs3.1}, we have
\begin{align}\label{1}
I(S_{\epsilon, k},T_{\epsilon, k})=&A+\frac{1}{2}\int_{\R^3}\Big((P(x)-1)U_{\epsilon, r}^2+(Q(x)-1)V_{\epsilon, r}^2\Big)\cr
&-\frac{\mu_1}{4}\int_{\R^3}\Big(S_{\epsilon, k}^4-u_{\epsilon}^4-U_{\epsilon, r}^4-4u_\epsilon^3U_{\epsilon, r}-2\sum\limits_{i\neq j}U_{\epsilon, x^i}^3U_{\epsilon, x^j}\Big)\cr
&-\frac{\mu_2}{4}\int_{\R^3}\Big(T_{\epsilon, k}^4-v_{\epsilon}^4-V_{\epsilon, r}^4-4v_\epsilon^3V_{\epsilon, r}-2\sum\limits_{i\neq j}V_{\epsilon, x^i}^3V_{\epsilon, x^j}\Big)\cr
&-\frac{\beta}{2}\int_{\R^3}\Big(S_{\epsilon, k}^2T_{\epsilon, k}^2-u_\epsilon^2v_\epsilon^2-u_\epsilon v_\epsilon U_{\epsilon, r}-u_\epsilon^2v_\epsilon V_{\epsilon, r}-U_{\epsilon,r}^2V_{\epsilon,r}^2\Big)\cr
&-\frac{\mu_1}{4}\int_{\R^3}\Big(U_{\epsilon, r}^4-\sum\limits_{j=1}^kU_{\epsilon, x^j}^4-2\sum\limits_{i\neq j}U_{\epsilon, x^i}^3U_{\epsilon, x^j}\Big)\cr
&-\frac{\mu_2}{4}\int_{\R^3}\Big(V_{\epsilon, r}^4-\sum\limits_{j=1}^kV_{\epsilon, x^j}^4-2\sum\limits_{i\neq j}V_{\epsilon, x^i}^3V_{\epsilon, x^j}\Big)\cr
&-\frac{\beta}{2}\int_{\R^3}\Big(U_{\epsilon, r}^2V_{\epsilon, r}^2-\sum\limits_{i=1}^kU_{\epsilon,x^i}^2V_{\epsilon,x^i}^2-\sum\limits_{i\neq j}^kV_{\epsilon x^i}U_{\epsilon,x^i}U_{\epsilon,x^j}-\sum\limits_{i\neq j}U_{\epsilon,x^i}^2V_{\epsilon, x^i}V_{\epsilon, x^j}\Big)\cr
=:&A+I_1-I_2-I_3-I_4-I_5-I_6-I_7,
\end{align}
where $A=k\Big(\frac{1}{2}\ds\int_{\R^3}|\nabla U_\epsilon|^2+U_\epsilon^2+|\nabla V_\epsilon|^2+V_\epsilon^2-\frac{1}{4}\int_{\R^3}\mu_1U_\epsilon^4+\mu_2V_\epsilon^4-\frac{\beta}{2}\int_{\R^3}U_\epsilon^2V_\epsilon^2\Big)$,
$U_{\epsilon,r}=\sum_{j=1}^kU_{\epsilon,x^j}$ and $V_{\epsilon,r}=\sum_{j=1}^kV_{\epsilon,x^j}$.

Since $(U_\epsilon,V_\epsilon)=(\gamma_1W_\epsilon,\gamma_2W_\epsilon)$, we have
\begin{align}\label{C1}
A=\frac{\mu_1+\mu_2-2\beta}{4(\mu_1\mu_2-\beta^2)}k\int_{\R^3}W_\epsilon^4.
\end{align}
By the exponential decay of $U$, it holds
\begin{align}\label{C.1}
&\int_{\R^3}(P(x)-1)U_{\epsilon, r}^2dx=\int_{\R^3}(P(x)-1)\big(\sum\limits_{j=1}^kU_{\epsilon,x^j}^2+2\sum\limits_{i\neq j}U_{\epsilon,x^i}U_{\epsilon,x^j}\big)\cr
=&\frac{a_1k}{r^{m_1}}\int_{\R^3}U^2\big(\frac{x}{\epsilon}\big)dx+O\Big(\sum\limits_{j=2}^ke^{-\frac{|x^1-x^j|}{2\epsilon}}\Big)
=\frac{a_1\gamma_1^2k}{r^{m_1}}\int_{\R^3}W_\epsilon^2dx+O\big(\sum\limits_{j=2}^ke^{-\frac{|x^1-x^j|}{2\epsilon}}\big).
\end{align}
Similarly, we can estimate
\begin{align*}
\int_{\R^3}(Q(x)-1)V_{\epsilon, r}^2dx=\frac{a_2k\gamma_2^2}{r^{m_2}}\int_{\R^3}W_\epsilon^2dx+O\big(\sum\limits_{j=2}^ke^{-\frac{|x^1-x^j|}{2\epsilon}}\big).
\end{align*}
Following the above two estimates, there holds
\begin{align}\label{C.2}
I_1=\int_{\R^3}(P(x)-1)U_{\epsilon, r}^2+(Q(x)-1)V_{\epsilon, r}^2dx=(\frac{a_1\gamma_1^2k}{2r^{m_1}}+\frac{a_2\gamma_2^2k}{2r^{m_2}})\int_{\R^3}W_\epsilon^2dx
+O\big(\sum\limits_{j=2}^ke^{-\frac{|x^1-x^j|}{2\epsilon}}\big).
\end{align}
On the other hand, we have
\begin{align}\label{C.3}
I_2=&\int_{\R^3}\Big(S_{\epsilon, k}^4-u_{\epsilon}^4-U_{\epsilon, r}^4-4u_\epsilon^3U_{\epsilon, r}-2\sum\limits_{i\neq j}U_{\epsilon, x^i}^3U_{\epsilon, x^j}\Big)\cr
=&O\Big(\int_{\R^3}u_{\epsilon}^2U^2_{\epsilon, r}+u_\epsilon^3U_{\epsilon, r}+\sum\limits_{i\neq j}U_{\epsilon, x^i}^2U_{\epsilon, x^j}^2\Big)\cr
=&O\big(\frac{k^2}{r}e^{-\frac{r}{\epsilon}}+\frac{k}{r}\sum\limits_{i\neq j}e^{-\frac{|x^i-x^j|}{\epsilon}}\big).
\end{align}
The same arguments as \eqref{C.3} give that
\begin{align}\label{C.4}
I_3=\int_{\R^3}\Big(T_{\epsilon, k}^4-v_{\epsilon}^4-V_{\epsilon, r}^4-4v_\epsilon^3V_{\epsilon, r}-2\sum\limits_{i\neq j}V_{\epsilon, x^i}^3V_{\epsilon, x^j}\Big)
=O\big(\frac{k^2}{r}e^{-\frac{r}{\epsilon}}+\frac{k}{r}\sum\limits_{i\neq j}e^{-\frac{|x^i-x^j|}{\epsilon}}\big),
\end{align}
\begin{align}\label{C.5}
I_5=\int_{\R^3}\Big(U_{\epsilon, r}^4-\sum\limits_{j=1}^kU_{\epsilon, x^j}^4-2\sum\limits_{i\neq j}U_{\epsilon, x^i}^3U_{\epsilon, x^j}\Big)=O\Big(\int_{\R^3}\sum\limits_{i\neq j}U_{\epsilon, x^i}^3U_{\epsilon, x^j}\Big)=O\big(\frac{k}{r}\sum\limits_{i\neq j}e^{-\frac{|x^i-x^j|}{\epsilon}}\big),
\end{align}
and
\begin{align}\label{C.6}
I_6=O\big(\frac{k}{r}\sum\limits_{i\neq j}e^{-\frac{|x^i-x^j|}{\epsilon}}\big),\,\,\,\,\,\,\,I_7=O\big(C_\beta\frac{k}{r}\sum\limits_{i\neq j}e^{-\frac{|x^i-x^j|}{\epsilon}}\big).
\end{align}
\end{proof}
Combining \eqref{C.1}-\eqref{C.6}, we obtain the result.

\begin{prop}\label{lemlpoh}
There holds
\begin{align*}
I(\bar{S}_{\epsilon, k }, \bar{T}_{\epsilon, k})=&\bar{A}+\big(\frac{a_1k\gamma_1^2}{2r^m}+\frac{a_2k\gamma_2^2}{2\rho^m}\big)\int_{\R^3}W_\epsilon^2dx-\bar{D}_1\frac{k^2}{r}e^{\frac{-r\pi}{\epsilon k}}-\bar{D}_2\frac{k^2}{r}e^{-\frac{r}{\epsilon}}\cr
&-\bar{E}_1\frac{k^2}{\rho}e^{-\frac{\rho\pi}{\epsilon k}}-\bar{E}_2\frac{k^2}{\rho}e^{-\frac{\rho}{\epsilon}}-\beta\frac{k}{r}\sum\limits_{i,j}^ke^{-\frac{|x^i-y^j|}{\epsilon}}+O(ke^{-\frac{\pi \rho}{\epsilon k}})\cr
=&\bar{A}+\frac{a_1k\bar{B}_1}{r^m}+\frac{a_2k\bar{B}_2}{\rho^m}-\bar{D}_1\frac{k^2}{r}e^{\frac{-r\pi}{\epsilon k}}-\bar{D}_2\frac{k^2}{r}e^{-\frac{r}{\epsilon}}-\bar{E}_1\frac{k^2}{\rho}e^{-\frac{\rho\pi}{\epsilon k}}\cr
&-\bar{E}_2\frac{k^2}{\rho}e^{-\frac{\rho}{\epsilon}}-\beta\frac{k}{r}\sum\limits_{i,j}^ke^{-\frac{|x^i-y^j|}{\epsilon}},
\end{align*}
where $\bar{A}=\ds\frac{1}{4}\int_{\R^3}\Big(\mu_1\sum\limits_{j=1}^kW_{\epsilon,\mu_1, x^j}^4+\mu_2\sum\limits_{j=1}^kW_{\epsilon,\mu_2, y^j}^4\Big)$, $\bar{B}_i=\frac{\gamma_i^2}{2}\ds\int_{\R^3}W_{\epsilon}^2dx $, $(i=1,2)$, $\bar{D}_1,\,\bar{D}_2,\,\bar{E}_1,\,\bar{E}_2$ are positive constants.
\end{prop}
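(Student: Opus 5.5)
The plan is to compute $I(\bar S_{\epsilon,k},\bar T_{\epsilon,k})$ directly, using $\bar S_{\epsilon,k}=u_\epsilon+\sum_j W_{\epsilon,\mu_1,x^j}$ and $\bar T_{\epsilon,k}=v_\epsilon+\sum_j W_{\epsilon,\mu_2,y^j}$ as in Theorem \ref{th1}. As in the proof of Proposition \ref{propB.1}, I would split the energy into self-interaction pieces, potential pieces, like-species interaction pieces, and cross-species and core--ring pieces. Each of the $\bar D,\bar E$ terms and the $\beta$-term should come out with an explicit sign, and I will keep track of these signs rather than just bounding them.

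\textbf{Self-interaction and potential terms.} Since $W_{\mu_i}$ solves \eqref{eqs1.2}, we have $\int(\epsilon^2|\nabla W|^2+W^2)=\mu_i\int W^4$ for $W=W_{\epsilon,\mu_i,\cdot}$. So the quadratic and quartic self-parts of each spike combine into $\frac14\mu_i\int W^4$. Summing over the spikes gives $\bar A$; the $(u_\epsilon,v_\epsilon)$ self-energy is a $k$-independent constant, which I absorb into $\bar A$. For the potential terms, the symmetry and $(H_2)$ with $m_1=m_2=m$ give
\[
\tfrac12\sum_j\int(P-1)W^2_{\epsilon,\mu_1,x^j}=\frac{a_1k}{2r^m}\int W^2_{\mu_1,\epsilon}+O\Big(\frac{k}{r^{m+1}}\Big).
\]
The analogous identity holds for $Q$ with $\rho$. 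Writing the constant as $\gamma_i^2\int W_\epsilon^2$ via the scaling $W_{\mu_i}=\mu_i^{-1/2}W$ produces the $\bar B_i$ terms. The cross terms $(P-1)W_{x^i}W_{x^j}$ are exponentially small times $r^{-m}$ and go into the error.

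\textbf{Like-species interaction.} I would expand $-\frac{\mu_1}{4}\int(\sum_j W_{x^j})^4$ and pair it with the quadratic cross terms $\int(\epsilon^2\nabla W_{x^i}\nabla W_{x^j}+W_{x^i}W_{x^j})=\mu_1\int W^3_{x^i}W_{x^j}$. This yields $-\frac{\mu_1}{2}\sum_{i\ne j}\int W_{x^i}^3W_{x^j}$ plus higher-order overlaps. The standard asymptotics $\int W^3(y)W(y-z)=(c+o(1))|z|^{-1}e^{-|z|}$ then apply, and the nearest neighbours dominate: $|x^1-x^2|=2r\sin\frac\pi k\sim 2\pi r/k$. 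This gives $-\bar D_1\frac{k^2}{r}e^{-\pi r/(\epsilon k)}$ with $\bar D_1>0$, up to $O(ke^{-\pi r/(\epsilon k)})$-type errors. The same argument with $\rho$ gives $\bar E_1$. The interaction of each ring with the core $u_\epsilon$ (resp.\ $v_\epsilon$) is handled the same way using Lemma \ref{lemb.1}. The dominant term is $-\mu_1\int u_\epsilon^3W_{x^j}$, which after summing over $j$ gives $-\bar D_2\frac{k^2}{r}e^{-r/\epsilon}$; the factor $k^2/r$ comes from the normalization used in \eqref{C.3}. The sign is negative because all functions are positive.

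\textbf{Cross-species term.} The term is $-\frac\beta2\int\bar S^2\bar T^2$. The ring--ring part reduces to $-\frac\beta2\sum_{i,j}\int W^2_{x^i}W^2_{y^j}$, which is asymptotic to $-\beta\frac kr\sum_{i,j}e^{-|x^i-y^j|/\epsilon}$ up to a positive constant that I normalize into the statement. The core--ring cross parts are exponentially small in $r/\epsilon$ and are absorbed into $\bar D_2,\bar E_2$.

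The main obstacle is controlling the error $O(ke^{-\pi\rho/(\epsilon k)})$ uniformly in $(r,\rho)\in D_k\times D_k$. This requires showing that non-nearest-neighbour terms $\sum_{j\ge3}e^{-|x^1-x^j|/\epsilon}$ and cubic overlaps are of lower order. I would handle this by splitting space into the sectors $\Omega_j$ and using $|x^1-x^j|\ge c\,j\,r/k$ to sum a geometric series.
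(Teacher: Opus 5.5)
Your route is the paper's: the identity \eqref{11}, $I(\bar S_{\epsilon,k},\bar T_{\epsilon,k})=\bar A+I_1-\frac14I_2-\frac14I_3-\frac\beta2I_4$, obtained from the equations of $W_{\mu_i}$ and of $(u_\epsilon,v_\epsilon)$, followed by estimates of the potential, like-species, core--ring and cross-species pieces. Absorbing the constant $I(u_\epsilon,v_\epsilon)$ into $\bar A$ is harmless; the paper simply drops it. The gap is that several asymptotics you attach to these pieces are not what the computation gives, so the displayed formula does not follow from your argument.

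(i) $W_{\mu_i}=\mu_i^{-1/2}W$ gives $\int W_{\epsilon,\mu_i}^2=\mu_i^{-1}\int W_\epsilon^2$. But $\mu_i^{-1}=\gamma_i^2$ forces $\beta\in\{0,\mu_i\}$, which $(A_\beta)$ excludes. The $\gamma_i^2$ in $\bar B_i$ is copied from Proposition \ref{propB.1}; only $\bar B_i>0$ is used later.

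(ii) You note $|x^1-x^2|=2r\sin\frac\pi k\sim\frac{2\pi r}{k}$. Then $\int W^3(y)W(y-z)\,dy\sim c|z|^{-1}e^{-|z|}$ yields a like-species term of order $\frac{k^2}{r}e^{-2\pi r/(\epsilon k)}$, not $\frac{k^2}{r}e^{-\pi r/(\epsilon k)}$.

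(iii) $\int W_{x^i}^2W_{y^j}^2$ decays like $e^{-2|x^i-y^j|/\epsilon}$ up to polynomial factors, not like $e^{-|x^i-y^j|/\epsilon}$. Moreover, off-diagonal terms such as $\int W_{x^1}W_{x^2}W_{y^1}^2$ have the same exponential rate. Since $|x^1-y^1|=|x^2-y^1|$, the minimum of $|x-x^1|+|x-x^2|+2|x-y^1|$ is $2|x^1-y^1|$, attained at $x=y^1$. So the cross part neither reduces to the diagonal sum nor is asymptotic to $\beta\frac kr\sum_{i,j}e^{-|x^i-y^j|/\epsilon}$. What you can prove is an upper bound of the type \eqref{8}.

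(iv) The core--ring pieces are a sum of $k$ equal terms, and Lemma \ref{lemb.1} only provides the rate $\theta<1$, so they are $O(ke^{-\theta r/\epsilon})$. The factor $k^2/r$ ``from the normalization in \eqref{C.3}'' is not a derivation; there it is only an upper bound. For $\beta<0$ the sign does not follow from positivity, since $-\beta\int u_\epsilon v_\epsilon^2W_{x^j}$ is of the same order as $-\mu_1\int u_\epsilon^3W_{x^j}$. These pieces are negligible and belong in the remainder, not in a term $-\bar D_2\frac{k^2}{r}e^{-r/\epsilon}$ with a fixed positive constant.

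The step you call the main obstacle is also misidentified. The remainder $O(ke^{-\pi\rho/(\epsilon k)})$ exceeds the displayed $\bar E_1$-term $\frac{k^2}{\rho}e^{-\pi\rho/(\epsilon k)}$ by the factor $\rho/k\sim\ln k$. It exceeds the actual like-species interaction, of order $\frac{k^2}{\rho}e^{-2\pi\rho/(\epsilon k)}$, by far more. So controlling it uniformly identifies no interaction term, and the second displayed equality, which drops it, cannot follow.

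What your expansion can deliver, and what the maximization in Theorem \ref{th1} needs, is a remainder of lower order than each term kept. For instance, $O\big(\frac{k}{r^{m+1}}+ke^{-(2-\sigma)|x^1-x^2|/\epsilon}+ke^{-\theta r/\epsilon}\big)$ together with the analogous $\rho$-terms. Your sector splitting with geometric summation over $j$ is the right tool for that. The paper's proof asserts \eqref{C.8} and \eqref{8} without computation and carries the same discrepancies. The honest output of your argument is therefore a corrected expansion rather than the display as written.
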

\begin{proof}
Since  $W_{\epsilon,\mu_i,y^j}$ satisfies \eqref{eqs1.2},
one has
\begin{align}\label{11}
I(\bar{S}_{\epsilon k},\bar{T}_{\epsilon k})=&\frac{1}{4}\int_{\R^3}\Big(\mu_1\sum\limits_{j=1}^kW_{\epsilon,\mu_1, x^j}^4+\mu_2\sum\limits_{j=1}^kW_{\epsilon,\mu_2, y^j}^4\Big)
+\frac{1}{2}\int_{\R^3}\Big((P(x)-1)W_{\epsilon, r}^2+(Q(x)-1)W_{\varepsilon, \rho}^2\Big)\cr
&-\frac{1}{4}\mu_1\int_{\R^3}\Big((u_{\epsilon}+W_{\epsilon, r})^4-u_{\epsilon}^4-\sum\limits_{j=1}^kW_{\epsilon,\mu_1,x^j}^4-4u_{\epsilon}^3W_{\epsilon,r}-2\sum\limits_{i\neq j}W_{\epsilon,\mu_1,x^i}^3W_{\epsilon,\mu_1,x^j}\Big)\cr
&-\frac{1}{4}\mu_1\int_{\R^3}\Big((v_{\epsilon}+W_{\epsilon, \rho})^4-v_{\epsilon}^4-\sum\limits_{j=1}^kW_{\varepsilon,\mu_2,y^j}^4-4v_{\epsilon}^3W_{\epsilon,\rho}-2\sum\limits_{i\neq j}W_{\epsilon,\mu_2,y^i}^3W_{\epsilon,\mu_2,y^j}\Big)\cr
&-\frac{\beta}{2}\int_{\R^3}\Big((u_{\epsilon}+W_{\epsilon, r})^2(v_{\epsilon}+W_{\epsilon,\rho})^2-u_{\epsilon }^2v_{\epsilon }^2-2u_{\epsilon}v_{\epsilon }^2W_{\epsilon,r}-2u_{\epsilon }^2v_{\epsilon }W_{\epsilon,\rho}\Big)\cr
&=:\bar{A}+I_1-\frac{1}{4}I_2-\frac{1}{4}I_3-\frac{\beta}{2}I_4,
\end{align}
where $W_{\epsilon,r}=\sum_{j=1}^kW_{\epsilon,\mu_1, x^j}$ and $W_{\epsilon,\rho}=\sum_{j=1}^kW_{\epsilon,\mu_2, y^j}$.

Similar as Proposition B.1, we have
\begin{align}\label{C.2}
I_1=&\int_{\R^3}\Big((P(x)-1)W_{\epsilon, r}^2+(Q(x)-1)W_{\epsilon, \rho}^2\Big)dx\cr
&=\big(\frac{a_1k\gamma_1^2}{r^m}+\frac{a_2k\gamma_2^2}{\rho^m}\big)\int_{\R^3}W_\epsilon^2dx
+O\Big(\sum\limits_{j=2}^ke^{-\frac{|x^1-x^j|}{2\epsilon}}\Big),
\end{align}
\begin{align}\label{C.8}
I_2=C\frac{k}{r}\sum\limits_{j=2}^ke^{-\frac{|x^1-x^j|}{\epsilon}}+C\frac{k^2}{r}e^{-\frac{r}{\epsilon}}+O(ke^{-\frac{\pi r}{\epsilon k}}),\,I_3=C\frac{k}{\rho}\sum\limits_{j=2}^ke^{-\frac{|y^1-y^j|}{\epsilon}}+C\frac{k^2}{\rho}e^{\frac{-\rho}{\epsilon}}+O(ke^{-\frac{\pi \rho}{\epsilon k}}),
\end{align}
and
\begin{align}\label{8}
I_4=&\int_{\R^3}\Big((u_{\epsilon}+W_{\epsilon,r})^2(v_{\epsilon}+W_{\epsilon,\rho})^2
-u_{\epsilon}^2v_{\epsilon}^2-2u_{\epsilon}v_{\epsilon}^2W_{\epsilon,r}-2u_{\epsilon}^2v_{\epsilon}W_{\epsilon,\rho}\Big)\cr
=&O\Big(\frac{k}{r}\sum\limits_{i,j}e^{-\frac{|x^i-y^j|}{\epsilon}}+ke^{-\frac{\pi r}{\epsilon k}}\Big).
\end{align}

Using \eqref{11}-\eqref{8}, the result follows.
\end{proof}

\medskip

\noindent{\bf Acknowledgements.}

\noindent  This work is supported by NSFC (No.12126345 and No.12471106). The authors would like to thank Professor Chunhua Wang from Central
China Normal University for the helpful discussion with her.

\noindent{\bf  Declarations}

\noindent Conflict of interest:\, All authors declare that they have no Conflict of interest.

\noindent Ethical approval:\,  This article does not contain any studies with human participants or animals performed by
the authors.

\noindent Data availability:\,  There is no data in our article.


\end{document}